\crefname{equation}{}{}
\newcommand{\refcheckize}[1]{%
  \expandafter\let\csname @@\string#1\endcsname#1%
  \expandafter\DeclareRobustCommand\csname relax\string#1\endcsname[1]{%
    \csname @@\string#1\endcsname{##1}\wrtusdrf{##1}}%
  \expandafter\let\expandafter#1\csname relax\string#1\endcsname
}
\newtheorem{theorem}{Theorem}[section]
\newtheorem{lemma}[theorem]{Lemma}
\newtheorem{proposition}[theorem]{Proposition}
\newtheorem{corollary}[theorem]{Corollary}
\theoremstyle{definition}
\newtheorem{remark}[theorem]{Remark}
\numberwithin{equation}{section}
\newcommand{\eps}{\epsilon}
\newcommand{\wt}{\widetilde}
\newcommand{\sm}{\smallsetminus}
\def\C{\mathrm C}
\def\N{\mathrm N}
\def\End{\mathrm{End}}
\def\F{\mathbf F}
\def\dim{\mathrm{dim }}
\def\Z{\mathrm Z}
\def\GL{\mathrm{GL}}
\def\Gal{\mathrm{Gal}}
\def\SU{\mathrm{SU}}
\def\GU{\mathrm{GU}}
\def\Sp{\mathrm{Sp}}
\def\SL{\mathrm{SL}}
\def\Or{\mathrm{O}}
\def\SO{\mathrm{SO}}
\def\G{\mathbf G}
\def\GammaL{\mathrm{\Gamma L}}
\def\det{\operatorname{det}}
\def\min{\mathrm{min}}
\def\AGamL{\mathrm{A\Gamma L}}
\def\GamL{\mathrm{\Gamma L}}
\def\Out{\mathrm{Out}}
\def\Aut{\mathrm{Aut}}
\def\Ker{\mathrm{Ker}}
\def\Sym{\mathrm{Sym}}
\def\End{\mathrm{End}}
\newcommand{\ind}[1]{\left[#1\right]}
\newcommand\floor[1]{\left\lfloor{#1}\right\rfloor}
\newcommand{\gen}[1]{\ensuremath{\langle #1\rangle}}
\begin{document}

\author{Daniele Garzoni}
\address{Daniele Garzoni, Department of Mathematics, University of Southern California, Los Angeles, CA 90089-2532,
	USA}
\email{garzoni@usc.edu}

%\maketitle
\title{Derangements in non-Frobenius groups}

\begin{abstract}
We prove that if $G$ is a transitive permutation group of sufficiently large degree $n$, then either $G$ is primitive and Frobenius, or the proportion of derangements in $G$ is larger than $1/(2n^{1/2})$.  This is sharp, generalizes substantially bounds of Cameron--Cohen and Guralnick--Wan, and settles conjectures of Guralnick--Tiep and Bailey--Cameron--Giudici--Royle in large degree. We also give an application to coverings of varieties over finite fields. 
\end{abstract}

\maketitle

\section{Introduction}

Given a finite transitive permutation group $G$ on a set $\Omega$ of size $n\ge 2$, an element $g\in G$ is called a \textit{derangement} if it acts without fixed points on $\Omega$. 

Chebotarev density theorem establishes a direct link between derangements and rational points of varieties over finite fields. In particular, derangements arise naturally in arithmetic geometry and were apparent already in the work of Frobenius in the nineteenth century. Derangements have other applications, for example to problems of random generation (see \cite{luczak1993random,eberhard2016permutations,lucchini2018chebotarev,garzoni2023probability}) and to the structure of Brauer groups (see \cite{fein1981relative}).

Denote by $\delta(G,\Omega)$, or simply $\delta(G)$, the proportion of derangements of $G$ on $\Omega$. A lemma going back to Jordan asserts that $\delta(G)>0$, and giving lower bounds for $\delta(G)$ has been a central problem in group theory in the past three decades;
see \Cref{t:arithmetic}, \Cref{sec:proof} and \cite{burness2016classical} for context related to this problem.

Answering a question of Lenstra motivated by the number field sieve (see \cite[Section 9]{buhler_lenstra_pomerance}), Cameron--Cohen \cite{cameron_cohen} showed that $\delta(G)\ge 1/n$; more precisely, if $n\ge 3$ then one of the following holds:
\begin{itemize}
	 \item[(i)] $G$ is a Frobenius group of order $n(n-1)$ and $\delta(G)=1/n$.
    \item[(ii)] $\delta(G)>1/n$.
\end{itemize}
The proof of this result is very short and elementary (see also \cite{boston1993proportion}, which further simplifies the proof). See below for the definition of a Frobenius group. Later, Guralnick--Wan \cite{guralnick1997wan} showed that if $n\ge 7$ then one of the following holds:
\begin{itemize}
	\item[(i)] $G$ is a Frobenius group of order $n(n-1)/a$ and $\delta(G)=a/n$, where $a\in \{1,2\}$.
    \item[(ii)] $\delta(G)>2/n$.
\end{itemize}
The proof of this result is much harder than that of \cite{cameron_cohen} -- it requires, besides some nontrivial elementary arguments,  the Classification of Finite Simple Groups.

The groups attaining the bounds $\delta(G)=1/n$ and $2/n$ are very special: they are primitive Frobenius groups.  We recall that a transitive permutation group $G$ is called \textit{Frobenius} if some nontrivial element fixes a point, and only the identity fixes at least two points. A classical result of Frobenius, based on character theory, states that $G$ contains a regular normal subgroup, which is furthermore nilpotent by a result of Thompson.

If $G$ is Frobenius, then $|G|=n(n-1)/a$ for a divisor $a$ of $n-1$, and $\delta(G)=a/n$. There are infinitely many examples attaining this bound; for instance, for every prime power $n$ and every divisor $a$ of $n-1$, consider $G=\F_n \rtimes G_0$ where $G_0\le \F_n^\times$ has order $(n-1)/a$. 
In particular, we find infinitely many examples with
\[
\delta(G)=\frac{1}{n},\frac{2}{n}, \frac{3}{n}, \frac{4}{n} \ldots
\]
Given the exceptionality of Frobenius groups, it is natural to ask whether a considerably stronger lower bound holds in the other cases. 
The purpose of this paper is to answer this question in large degree, by showing that a bound of approximately $1/(2n^{1/2})$ holds. What is more, it is sufficient to exclude groups that are both primitive and Frobenius in order to get this improvement.
Let
\begin{equation}
	\label{eq:def_g}
	g(n):=\frac{n^{1/2}+1}{2n}\sim \frac{1}{2n^{1/2}}. 
\end{equation}

\begin{theorem}
\label{t:main}
Let $G$ be a finite transitive permutation group of sufficiently large degree $n$. Then one of the following holds:
\begin{enumerate}
    \item $G$ is primitive and Frobenius.
    \item $\delta(G)\ge g(n)$.
\end{enumerate}
\end{theorem}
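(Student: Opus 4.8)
The plan is to combine one soft general estimate with a reduction to primitive groups, and, in the primitive case, the O'Nan--Scott theorem together with the Classification of Finite Simple Groups. The soft input is that \emph{every} finite transitive group $G$ of degree $n$, with point stabiliser $G_\omega$, satisfies $\delta(G)\ge (n-1)/|G|$: by Burnside's lemma $\sum_{x\in G}|\mathrm{fix}(x)|=|G|$, so at most $|G|-n$ non-identity elements fix a point and hence at least $n-1$ elements are derangements. Since $(n-1)/|G|\ge g(n)$ exactly when $|G|\le 2n(n^{1/2}-1)$, we may assume throughout that $|G_\omega|=|G|/n>2(n^{1/2}-1)$, i.e. that the point stabiliser is large. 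Note also that $g$ is strictly decreasing, so $g(m)\ge g(n)$ whenever $m\le n$.

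I would first dispose of imprimitive $G$ by induction on $n$. Fix a maximal block system with $m$ blocks of size $b$, $1<m<n$, and let $\bar G\le\Sym(m)$ be the induced (primitive) action; since every derangement of $\bar G$ pulls back to a derangement of $G$, one has $\delta(G)\ge\delta(\bar G)$. If $\bar G$ is not Frobenius then, according as $m$ is bounded or large, Cameron--Cohen \cite{cameron_cohen} gives $\delta(\bar G)\ge 1/m>g(n)$ (for $n$ large) or the inductive hypothesis gives $\delta(\bar G)\ge g(m)\ge g(n)$. If $\bar G$ is primitive and Frobenius, then $\bar G$ is affine and its complement $\bar H$ has order dividing $m-1$; here I would play two estimates against one another: the block estimate $\delta(G)\ge\delta(\bar G)=\tfrac{m-1}{m|\bar H|}$, and a within-block estimate obtained by counting, for each block $B$, those $x\in G$ whose image in $\bar G$ fixes $B$ and no other block and which act as a derangement on $B$ --- all such $x$ being derangements of $G$ --- which, via Cameron--Cohen applied to the transitive action of $G_{\{B\}}$ on $B$ and summation over the $m$ blocks, yields $\delta(G)\ge\tfrac1b-\tfrac1{|\bar H|}$. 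Using $|\bar H|\le m-1$ one checks that for every admissible value of $|\bar H|$ at least one of these two bounds is $\ge g(n)$.

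For primitive $G$ I would run through the O'Nan--Scott types. A primitive group is Frobenius only in the affine case --- otherwise its socle would be a nilpotent, hence elementary abelian, regular normal subgroup --- so in every other type the goal is merely $\delta(G)\ge g(n)$. When $G$ is almost simple, the theorem of Fulman and Guralnick (settling the Boston--Shalev conjecture) gives an absolute constant $c>0$ with $\delta(G)\ge c>g(n)$ for all large $n$; the diagonal, product and twisted wreath types reduce, via the criterion for an element of a wreath product to be a derangement, to their almost simple and affine components, or are handled by exploiting the (often regular) socle, and in each case $\delta(G)$ stays well above $g(n)$ for large $n$. This leaves the affine case $G=V\rtimes G_0$ with $V=\F_p^d$, $G_0\le\GL_d(p)$ irreducible and $G$ not Frobenius, $n=p^d$, where one has the exact formula
\[
\delta(G)=1-\frac1{|G_0|}\sum_{x\in G_0}\frac1{|\C_V(x)|}=\frac1{|G_0|}\sum_{i\ge1}c_i\Bigl(1-\frac1{p^i}\Bigr),
\]
with $\C_V(x)$ the fixed subspace of $x$ and $c_i$ the number of $x\in G_0$ whose fixed subspace has order $p^i$. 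This gives $\delta(G)=(n-1)/|G|$ precisely when $G$ is Frobenius (equivalently $c_0=|G_0|-1$), and since we may assume $|G_0|>2(n^{1/2}-1)$, the inequality $\delta(G)\ge g(n)$ reduces to showing that $G_0$ has $\gtrsim |G_0|/n^{1/2}$ elements fixing a nonzero vector. This is sharp: $G_0=\GammaL_1(p^2)$ gives $\delta(G)=g(n)$, and every other configuration has strictly more room (for instance $\GammaL_1(p^k)$ with $k\ge3$).

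The technical heart, and the main obstacle, is this affine estimate: one must bound from below, uniformly and with the correct constant, the number of elements of an irreducible $G_0\le\GL_d(p)$ of order $>2(n^{1/2}-1)$ that fix a nonzero vector, knowing only that $G_0$ is not semiregular on $V\sm\{0\}$. I would carry this out along the structure of irreducible linear groups (Aschbacher's theorem and the Classification): for $G_0$ preserving a direct-sum decomposition one inducts on $d$; for $G_0$ of semilinear type $G_0\le\GammaL_{d/e}(p^e)$ one computes directly, the near-extremal behaviour occurring precisely when $d/e=1$; for tensor-decomposable and extraspecial-normaliser types one shows that either $|G_0|$ is too small to matter or eigenvalue-$1$ elements are abundant; and for the almost quasisimple type one invokes the known lower bounds for the proportion of eigenvalue-$1$ elements in irreducible representations of quasisimple groups. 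The imprimitive reduction and the non-affine primitive types are, by comparison, soft, resting on Cameron--Cohen \cite{cameron_cohen}, Guralnick--Wan \cite{guralnick1997wan}, Fulman--Guralnick, and the monotonicity of $g$.
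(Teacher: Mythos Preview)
Your plan is the paper's plan: reduce imprimitive to primitive via the two-estimate dichotomy when the block quotient is Frobenius (this is exactly the paper's Lemma~2.1), handle primitive non-affine groups by appeal to Fulman--Guralnick, and attack the affine case via the eigenvalue-$1$ proportion $\alpha(G_0)$ and the structure theory of irreducible linear groups --- the last is indeed where almost all the work lies, and your outline of it (imprimitive linear, semilinear $\GamL_{d/e}$, tensor/extraspecial normaliser, quasisimple) matches the paper's Sections~4--8.

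One correction worth making: the Boston--Shalev theorem gives an absolute constant $\delta(G)\ge c$ only for \emph{simple} transitive groups; for almost simple primitive groups (and in fact uniformly for all primitive non-affine groups) the available bound is $\delta(G)\ge\epsilon/\log n$ (Luczak--Pyber, Fulman--Guralnick), which the paper quotes as its Theorem~1.6 and which still dominates $g(n)$ for large $n$. So invoke that logarithmic bound directly for all non-affine primitive types rather than going through the O'Nan--Scott cases with an absolute constant you do not actually have.
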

The crux of this paper is the case of primitive affine groups; see \Cref{sec:proof} and \Cref{t:affine} for details.

There are primitive non-Frobenius groups attaining equality in \Cref{t:main}(2); take
 $G=\F_n\rtimes G_0$, where $n$ is a square, $G_0=\GL_1(n)\rtimes \gen\sigma$ and $\sigma$ is the $n^{1/2}$-th power map (see \Cref{ex:bound_sharp}).

\Cref{t:main} generalizes substantially the aforementioned bounds of \cite{cameron_cohen,guralnick1997wan}. What is more, in \cite[p. 272]{guralnick_tiep_eigenvalue1}, Guralnick--Tiep conjectured that $\delta(G)\ge 1/n^{1/2}$ if $G$ is primitive affine and non-Frobenius. \Cref{t:main}, in particular, confirms (up to a necessary minor amendment) this conjecture in large degree, showing also that the primitivity assumption is not needed.

Let $D(G)$ be the subgroup of $G$ generated by all derangements. Bailey--Cameron--Giudici--Royle \cite[Conjecture 1]{bailey2021groups} conjectured that if $G$ is not primitive Frobenius then $|G:D(G)|\le n^{1/2}-1$, and reduced the statement to the case of primitive affine groups. We clearly have
\begin{equation}
\label{eq:subgroup_conjecture}
|G:D(G)|\le \frac{1}{\delta(G)},
\end{equation}
hence by \Cref{t:main} we get  $|G:D(G)|\le 1/g(n)\sim 2n^{1/2}$ for $n$ large. In fact, in most cases we will prove $\delta(G)>1/(n^{1/2}-1)$, so the conjecture will follow at once by \Cref{eq:subgroup_conjecture}. In the remaining cases, the bound will be easily obtained by inspection of the proof of \Cref{t:main}, so we deduce \cite[Conjecture 1]{bailey2021groups} in large degree.

\begin{corollary}
    \label{t:subgroup}
Let $G$ be a finite transitive permutation group of sufficiently large degree $n$. Then one of the following holds:
\begin{enumerate}
    \item $G$ is primitive and Frobenius.
    \item $|G:D(G)|\le n^{1/2}-1$.
\end{enumerate}
\end{corollary}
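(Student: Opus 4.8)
The point of departure is the elementary inequality \eqref{eq:subgroup_conjecture}, which says $|G:D(G)|\le 1/\delta(G)$ because every derangement lies in $D(G)$. Combined with \Cref{t:main} this already yields $|G:D(G)|\le 1/g(n)\sim 2n^{1/2}$, falling short of the claimed bound by a factor of roughly two. To remove that factor I would inspect the proof of \Cref{t:main}: wherever that argument delivers the marginally stronger estimate $\delta(G)\ge 1/(n^{1/2}-1)$, the conclusion $|G:D(G)|\le n^{1/2}-1$ is immediate from \eqref{eq:subgroup_conjecture}. One then verifies that this stronger estimate holds in all cases except a short, explicit list of near-extremal configurations. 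Since the proof of \Cref{t:main} reduces to the primitive affine case (\Cref{t:affine}), where the near-extremal behaviour lives, these residual configurations are all primitive affine.

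So suppose $G=V\rtimes G_0$ is one of the residual groups, with $V$ elementary abelian of order $n$ and $G_0\le\GL(V)$ irreducible; identify $\Omega$ with $V$. Every nonzero element of $V$ is a fixed-point-free translation, so $V\le D(G)$, and since $V\trianglelefteq G$ the modular law gives $D(G)=V\rtimes(D(G)\cap G_0)$, hence $|G:D(G)|=|G_0:D(G)\cap G_0|$. Now $(v,g_0)$ fixes a point of $V$ if and only if $v\in\op{im}(1-g_0)$, so $(v,g_0)$ is a derangement exactly when $1$ is an eigenvalue of $g_0$ on $V$ and $v\notin\op{im}(1-g_0)$; composing such a derangement with an appropriate translation shows $g_0\in D(G)$. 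Reading this modulo $V$ gives
\[
D(G)\cap G_0=H:=\gen{\,g_0\in G_0:\ 1\ \text{is an eigenvalue of}\ g_0\ \text{on}\ V\,},
\]
and $H\ne 1$ exactly because $G$ is not Frobenius. Thus everything comes down to proving $|G_0:H|\le n^{1/2}-1$ for the $G_0$ on the residual list. In the principal family $n$ is a square and $G_0\le\GammaL_1(n)$, as in \Cref{ex:bound_sharp}: here the semilinear maps $x\mapsto\lambda x^{p^j}$ possessing a nonzero fixed point are numerous enough that their products already generate a subgroup of index exactly $n^{1/2}-1$ in $G_0$, in agreement with the equality $\delta(G)=g(n)$ of \Cref{ex:bound_sharp}; this is also what shows \Cref{t:subgroup} is best possible.

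The real work --- and essentially the only obstacle --- is the bookkeeping just indicated: going through the proof of \Cref{t:main} (equivalently \Cref{t:affine}), isolating exactly the configurations where $\delta(G)<1/(n^{1/2}-1)$, and confirming $|G_0:H|\le n^{1/2}-1$ in each. Outside the $\GammaL_1$-type family this is quick, since there either $\delta(G)$ clears $1/(n^{1/2}-1)$ with room to spare or else $H=G_0$; the delicate point is the explicit index computation in the semilinear case, which is precisely where the constant $n^{1/2}-1$ --- rather than $2n^{1/2}$ --- enters and where the bound is attained.
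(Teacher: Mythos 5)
Your overall architecture matches the paper's: start from the trivial inequality \eqref{eq:subgroup_conjecture}, observe that \Cref{t:main} alone loses a factor of roughly $2$, reduce to primitive affine groups, and there translate $|G:D(G)|$ into $|G_0:A(G_0)|$ (your subgroup $H$ is exactly the paper's $A(G_0)$, and your bijection between cosets of derangements and elements with eigenvalue~$1$ is \Cref{l:subgroup_eigenvalue1}). So the core idea is the right one, and the identification of the $\GammaL_1$ family as the source of the extremal constant is also correct.

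There is, however, a genuine gap in your reduction step, not just a bookkeeping omission. You assert that ``these residual configurations are all primitive affine'' because the proof of \Cref{t:main} reduces to the affine case. But the reduction in \Cref{l:reduction_primitive} only delivers $\delta(G)>1/(1.5\,n^{1/2})$ for imprimitive groups; feeding this into \eqref{eq:subgroup_conjecture} gives $|G:D(G)|<1.5\,n^{1/2}$, which is \emph{weaker} than the target $n^{1/2}-1$. So a priori imprimitive groups can live in the ``residual'' region $\delta(G)<1/(n^{1/2}-1)$, and you have no argument ruling them out. The paper sidesteps this entirely by citing \cite{bailey2021groups}, which performed a purely group-theoretic reduction of the $|G:D(G)|$ statement to primitive affine groups; that reduction is independent of any $\delta$-bound and is what makes the rest of the argument legitimate. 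Without it (or a strengthening of \Cref{l:reduction_primitive} from $1.5\,n^{1/2}$ down to $n^{1/2}-1$), your proof does not cover the imprimitive case.

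A secondary issue is that the ``real work'' you identify --- verifying $|G_0:A(G_0)|\le n^{1/2}-1$ in the near-extremal affine configurations --- is not actually carried out. You gesture at the $\GammaL_1(n)$ family via the specific group in \Cref{ex:bound_sharp}, but the paper must handle arbitrary irreducible $G_0\le\GammaL_1(q^d)$, which it does by an explicit number-theoretic computation of $|A(G_0)|=2(t,p^{f/2}+1)$ at the end of the proof of \Cref{prop:GamL_1}, together with the $\GammaL_2$ cases (\Cref{l:sl2(5),l:q8,l:sl2(q_0)}) and the quasisimple/extraspecial cases feeding into \Cref{t:affine}(3). This second gap is a matter of missing detail rather than a wrong idea; the first gap is the one that actually breaks the argument.
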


Next, we state a consequence of \Cref{t:main} that highlights the aforementioned connection between derangements and arithmetic.

\begin{corollary} 
\label{t:arithmetic}
 Let $\pi\colon X \to Y$ be a separable morphism of sufficiently large degree $n$ between integral normal quasi-projective varieties of dimension $d$ over a finite field $\F_\ell$. Assume that the Galois closure of $\pi$ is geometrically integral, and that the monodromy group of $\pi$ is not primitive and Frobenius. Then,
 \[
|\pi(X(\F_\ell))| \le \big(1- g(n)\big) \ell^d + O_\pi(\ell^{d-1/2}).
 \]
    \end{corollary}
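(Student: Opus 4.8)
The plan is to reduce the statement to counting Frobenius-fixed points on the fibres of $\pi$, and then to combine an effective Chebotarev density theorem over $\F_\ell$ with \Cref{t:main}. Write $\wt X\to Y$ for the Galois closure of $\pi$ and let $G$ be the monodromy group of $\pi$, regarded as a permutation group on the $n$ points of a geometric fibre. Since $\wt X$ is geometrically integral, the arithmetic and geometric monodromy groups of $\pi$ coincide and both equal $G$; since $X$ is integral, $G$ is transitive of degree $n$; and by hypothesis $G$ is not primitive and Frobenius. As $n$ is sufficiently large, \Cref{t:main} then gives $\delta(G)\ge g(n)$. It therefore suffices to prove
\[
|\pi(X(\F_\ell))| \;\le\; \big(1-\delta(G)\big)\,\ell^{d} + O_\pi(\ell^{d-1/2}).
\]

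First I would discard a negligible locus. Let $Y^{\circ}\subseteq Y$ be the dense open subvariety over which $X$ and $Y$ are smooth and $\pi$ is finite \'etale; its complement is a proper closed subset of $Y$, hence of dimension at most $d-1$, so $|(Y\sm Y^{\circ})(\F_\ell)| = O_\pi(\ell^{d-1})$ by an elementary point count. Since this locus contributes at most $O_\pi(\ell^{d-1})$ to $|\pi(X(\F_\ell))|$, it is enough to bound the number of $y\in Y^{\circ}(\F_\ell)$ that lie in $\pi(X(\F_\ell))$.

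For $y\in Y^{\circ}(\F_\ell)$ the fibre $\pi^{-1}(y)$ is a finite \'etale $\F_\ell$-scheme of degree $n$ on whose $n$ geometric points Frobenius acts through a well-defined conjugacy class $C_y\subseteq G$; moreover $y\in\pi(X(\F_\ell))$ if and only if this action has a fixed point, i.e.\ if and only if $C_y$ does not consist of derangements. The effective Chebotarev theorem over finite fields --- obtained by applying the Lang--Weil estimates to the subschemes of $\wt X$ cut out by the Frobenius condition, geometric integrality of $\wt X$ being precisely what forces equidistribution over all of $G$ rather than over a coset --- gives, for each conjugacy class $C$ of $G$,
\[
\#\{\,y\in Y^{\circ}(\F_\ell)\ :\ C_y=C\,\} \;=\; \frac{|C|}{|G|}\,\ell^{d} + O_\pi(\ell^{d-1/2}).
\]
Summing over the finitely many conjugacy classes of non-derangements and adding back the contribution of $Y\sm Y^{\circ}$ yields the displayed inequality, and then $\delta(G)\ge g(n)$ finishes the proof.

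The one genuinely delicate point is the Chebotarev input: one needs a version of the statement valid for a not-necessarily-projective $Y$, with an error term $O_\pi(\ell^{d-1/2})$ uniform in $\ell$ (the implied constant depending only on the geometry of $\wt X\to Y$, hence ultimately on $\pi$), and one must use the hypothesis that the Galois closure is geometrically integral exactly to ensure that Frobenius classes equidistribute across the whole group $G$. Everything else --- transitivity and degree of the monodromy group, and the negligibility of the non-\'etale locus --- is routine.
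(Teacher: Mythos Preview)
Your argument is correct and is exactly the standard deduction the paper has in mind. Note that the paper does not actually write out a proof of \Cref{t:arithmetic}: it is stated as a consequence of \Cref{t:main} via the Chebotarev density theorem, and the surrounding discussion (on geometric integrality of the Galois closure forcing equidistribution over $G$ rather than a coset) matches your use of that hypothesis. Your write-up simply makes explicit the routine steps --- restricting to the \'etale locus, applying Lang--Weil/effective Chebotarev to each conjugacy class, and summing over non-derangements --- that the paper leaves to the reader.
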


  If $\pi$ is defined by a polynomial $f(x)\in \F_\ell(Y)[x]$,  then the condition that the monodromy group is primitive and Frobenius can be phrased field theoretically as follows: $\F_\ell(X)/\F_\ell(Y)$ is minimal, and the Galois closure of $\F_\ell(X)/\F_\ell(Y)$ is not $\F_\ell(X)$, and is generated by any two roots of $f$. Therefore the bound in \Cref{t:arithmetic} applies whenever this (very restrictive) condition does not hold.

    If the Galois closure of $\pi$ (namely, the normalization of $X$ in the Galois closure of $\F_\ell(X)/\F_\ell(Y)$) is not geometrically integral, then one must more generally count derangements in a coset (indeed this is one of the motivations of \cite{guralnick1997wan}, which addresses the case of curves).
    
  With regard to \Cref{t:main}, it seems worth noting that one gets almost the same bound (approximately $1/(60n^{1/2})$) just by excluding primitive Frobenius subgroups of $\AGamL_1(n)$, as opposed to all primitive Frobenius groups. Let
  \begin{equation}
  	\label{eq:def_f}
  	f(n):= \frac{n^{1/2}+1}{60n}\sim \frac{1}{60n^{1/2}}.
  \end{equation}

\begin{theorem}
	\label{t:main_2}
	Let $G$ be a finite transitive permutation group of sufficiently large degree $n$. Then one of the following holds:
	\begin{enumerate}
		\item $G$ is a primitive Frobenius subgroup of $\AGamL_1(n)$.
		\item $\delta(G)\ge f(n)$.
	\end{enumerate}
\end{theorem}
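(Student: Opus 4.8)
The plan is to deduce \Cref{t:main_2} from \Cref{t:main} together with the classification of fixed-point-free linear groups. If $G$ is not primitive and Frobenius, then \Cref{t:main} gives $\delta(G)\ge g(n)>f(n)$, and we are done; so assume $G$ is primitive and Frobenius, with Frobenius kernel $N$. Since $G$ is primitive, $N$ is a minimal normal subgroup (any nontrivial normal subgroup of a primitive group is transitive, and a transitive subgroup of the regular group $N$ must equal $N$), and since $N$ is nilpotent (Thompson) it is elementary abelian, so $n=p^d$ for a prime $p$. Writing $V=N$, we get $G=V\rtimes H$ with $H\le\GL_d(p)$ acting irreducibly on $V$ (by primitivity) and fixed-point-freely on $V\sm\{0\}$ (by the Frobenius hypothesis). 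If $H$ is $\GL_d(p)$-conjugate into $\GamL_1(p^d)$ --- equivalently, if $G$ is permutation isomorphic to a subgroup of $\AGamL_1(n)$ --- then we are in case (1); so assume this is not the case.

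In a Frobenius group the set of derangements is exactly $N\sm\{1\}$, so $\delta(G)=(n-1)/|G|=(n-1)/(n|H|)$. As $(n-1)/(n^{1/2}+1)=n^{1/2}-1$, the bound $\delta(G)\ge f(n)$ is equivalent to
\begin{equation}
\label{eq:frob_H_bound}
|H|\le 60\,(n^{1/2}-1).
\end{equation}
Hence the proof reduces to establishing \eqref{eq:frob_H_bound} for every fixed-point-free irreducible $H\le\GL_d(p)$ that is not conjugate into $\GamL_1(p^d)$, provided $n=p^d$ is large. This is the substantive point, and it is exactly the kind of structural input that goes into \Cref{t:affine}.

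To prove \eqref{eq:frob_H_bound} I would first bound a maximal cyclic normal subgroup $C$ of $H$. If $C$ acted $\F_p$-irreducibly on $V$ then $\F_p[C]$ would be a field isomorphic to $\F_{p^d}$, so $\C_{\GL_d(p)}(C)=\F_{p^d}^\times$ and $\N_{\GL_d(p)}(C)$ would embed, modulo this centralizer, into $\Gal(\F_{p^d}/\F_p)$; thus $\N_{\GL_d(p)}(C)\le\GamL_1(p^d)$ and $H\le\GamL_1(p^d)$, a contradiction. Hence $V|_C$ is $\F_p$-reducible; since $C$ is fixed-point-free, all its $\F_p$-irreducible constituents are faithful and therefore of the same dimension $s=\op{ord}_{|C|}(p)$, with $s\mid d$ and $s\le d/2$, and $|C|$ divides $p^s-1$. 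Therefore $|C|\le p^{d/2}-1=n^{1/2}-1$. It remains to control $|H|/|C|$ using the Zassenhaus--Passman classification of fixed-point-free groups: a nontrivial field automorphism is never fixed-point-free, so the ``field-automorphism'' contributions do not come for free, and one checks that whenever $|H|/|C|$ grows the dimension $d$ must grow at least as fast, so that $p^{d/2}-1$ stays ahead. The only non-abelian composition factor that can occur is $A_5$, and the extremal configuration is the central product $C\circ\SL_2(5)$, for which $|H|=|C|\cdot|\SL_2(5)|/|\Z(\SL_2(5))|=60|C|\le 60(n^{1/2}-1)$; this also shows that \Cref{t:main_2} is sharp.

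The main obstacle is precisely this last step: one must run through the fixed-point-free groups not conjugate into $\GamL_1(p^d)$ and verify \eqref{eq:frob_H_bound} in each case, the worst cases being the central products with $\SL_2(5)$ and its solvable analogues built from $Q_8$, $Q_{16}$, $\SL_2(3)$, and the binary octahedral group. Two features keep the bound in force. First, the genuinely ``large'' fixed-point-free metacyclic groups, such as $C_\ell\rtimes C_{\ell-1}$ with $p$ not a primitive root modulo $\ell$, although not contained in $\GamL_1(p^d)$, act only on modules of $\F_p$-dimension at least $\ell-1$, so that $n^{1/2}=p^{(\ell-1)/2}$ dwarfs $|H|=\ell(\ell-1)$. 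Second, the bounded exceptional factors contribute at most $|A_5|=60$ once the cyclic ``scalar'' part has been absorbed into $C$. Alternatively one may simply quote the structural bounds on primitive affine groups obtained in the proof of \Cref{t:affine}, which already yield $|H|=O(n^{1/2})$ here with implied constant below $60$, so that \eqref{eq:frob_H_bound} holds for $n$ large.
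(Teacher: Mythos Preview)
Your reduction is clean and correct: if $G$ is not primitive Frobenius then \Cref{t:main} gives $\delta(G)\ge g(n)>f(n)$; and if $G=V\rtimes H$ is primitive Frobenius with $H$ not conjugate into $\GamL_1(p^d)$, the inequality $\delta(G)\ge f(n)$ is exactly $|H|\le 60(n^{1/2}-1)$. Since in the semiregular case $\alpha(H)=1/|H|$, this last bound is precisely the content of \Cref{t:affine}(2). So your ``alternative'' of quoting \Cref{t:affine} is the paper's proof: the paper derives \Cref{t:main} and \Cref{t:main_2} simultaneously from \Cref{t:affine} (together with \Cref{l:reduction_primitive} and \Cref{t:luczak_guralnick}), rather than routing \Cref{t:main_2} through \Cref{t:main}, but that is only packaging.

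The Zassenhaus route you sketch is a genuinely different organization, and it can be made to work, but what you have written is not yet a proof. The bound $|C|\le n^{1/2}-1$ for a maximal cyclic normal subgroup is correct, and so is your observation that $|H|/|C|$ is \emph{not} bounded by $60$ in general (your metacyclic example $C_\ell\rtimes C_{\ell-1}$ is a real one: when $p$ is not a primitive root modulo $\ell$ this group is irreducible of degree $\ell-1$ and not in $\GamL_1(p^{\ell-1})$). But the dichotomy you outline---metacyclic pieces are absorbed by dimension growth, exotic pieces contribute at most $60$---glosses over their interaction and still needs a dimension estimate for each of the solvable exceptions you list. Carrying this out amounts to the same case analysis the paper does in \Cref{l:general_imprimit} (imprimitive/metacyclic case), \Cref{l:sl2(5)} (the $\SL_2(5)$ case, where the extremal constant $60$ is attained; cf.\ \Cref{rem:SL_2(5)_frobenius}), and \Cref{l:q8} (the $Q_8$ case), only indexed by the abstract Zassenhaus types rather than by the primitive-linear-group structure. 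The paper's organization has the advantage that these lemmas simultaneously handle the non-semiregular subgroups of $\GamL_2$, which your Frobenius-only argument does not touch.

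One small correction: the implied constant you would extract from \Cref{t:affine} is exactly $60$, not ``below $60$''; equality occurs for $H=\F_q^\times\circ\SL_2(5)$ with $q\equiv -1\pmod{60}$.
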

There are primitive (Frobenius) groups that do not lie in (1) and that attain equality in (2), of the form $G=\F_n^2\rtimes G_0$, where  $\SL_2(5)\trianglelefteq G_0$; see \Cref{rem:SL_2(5)_frobenius}. 

   \subsection{About the proof} 
    \label{sec:proof}
    We prove  an elementary reduction of \Cref{t:main} to primitive groups, see \Cref{l:reduction_primitive}.
    
Luczak--Pyber \cite{luczak1993random} and Fulman--Guralnick 
\cite{FG4} proved the so-called \textit{Boston--Shalev conjecture}, asserting that if $G$ is a finite simple transitive permutation group of degree $n$, then $\delta(G)\ge \epsilon$ for an absolute constant $\epsilon>0$. See also \cite{eberhard2023garzoni} for a conjugacy-class version. A consequence of this result is the following (see \cite{FG}):

\begin{theorem}(Luczak--Pyber, Fulman--Guralnick)
\label{t:luczak_guralnick}
Let $G$ be a primitive non-affine permutation group of degree $n$. Then $\delta(G)\ge \epsilon/\log(n)$ for an absolute constant $\epsilon>0$.     
\end{theorem}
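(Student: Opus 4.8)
The plan is to invoke the O'Nan--Scott theorem to reduce to the four families of non-affine primitive groups -- almost simple, diagonal, product, and twisted wreath type -- and, in each, to produce many derangements inside the socle $N:=\mathrm{soc}(G)$, passing to derangements in the non-trivial cosets of $N$ in $G$ when the index $|G:N|$ is too large.

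Two remarks are used throughout. First, $N$ is transitive on $\Omega$: the orbits of a non-trivial normal subgroup form a block system, so by primitivity $N$ is transitive (it cannot be trivial). Second, a derangement of $N$ is a derangement of $G$, so $\delta(G)\ge \delta_\Omega(N)\,|N|/|G|=\delta_\Omega(N)/|G:N|$. I would first dispatch the \emph{almost simple} case, where $N=T$ is simple; being transitive it satisfies $\delta_\Omega(T)\ge\epsilon$ by the Boston--Shalev conjecture, and since $|G:T|\le|\Out(T)|$, while $|\Out(T)|=O(\log n)$ for every faithful transitive action of $T$ of degree $n$ -- the field-automorphism part contributes at most $\log_2 q$, the diagonal and graph parts are linear in the Lie rank, and the minimal faithful permutation degree of a group of Lie type of rank $r$ over $\F_q$ is at least about $q^{\,r-O(1)}$ -- one gets $\delta(G)\ge\epsilon/\log n$ after shrinking $\epsilon$. (With the coset form of Boston--Shalev one could even remove the $\log n$ here, but this is not needed.)

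In the other three types $N=T^k$ with $k\ge 2$, and I would record $\delta_\Omega(N)$. In \emph{product} action an element $(t_1,\dots,t_k)\in T^k$ is a derangement on $\Delta^k$ exactly when some coordinate $t_i$ is a derangement of $T$ on $\Delta$, so $\delta_\Omega(N)=1-(1-\delta_\Delta(T))^k\ge\epsilon$ by Boston--Shalev for $T$ on $\Delta$. In \emph{twisted wreath} type $N$ acts regularly, so $\delta_\Omega(N)=1-|N|^{-1}\ge 1/2$. In \emph{diagonal} type, computing in the coset space $T^k/\{(t,\dots,t):t\in T\}$ shows that $(s_1,\dots,s_k)$ fixes a point precisely when the $s_i$ are pairwise $T$-conjugate, so $\delta_\Omega(N)=1-\sum_C(|C|/|T|)^k\ge 1-\max_C|C|/|T|\ge 1/2$, since no non-trivial conjugacy class of a simple group has size exceeding $|T|/2$. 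This would finish if $|G:N|$ were bounded, but it need not be: $|G:N|\le|\Out(T)|^k\,k!$ can dwarf every power of $\log n$ (e.g. diagonal or product groups built from $T=A_5$ with $k$ enormous). So one is forced to count derangements in the non-trivial cosets $Nx$ of $N$ in $G$.

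The mechanism is as follows. Realize the elements of $Nx$ inside $\Aut(T)\wr S_k$; such an element is a derangement as soon as, on a single cycle of its image $\sigma\in S_k$, the associated ``cycle word'' $\alpha_{i_1}\alpha_{i_2}\cdots\alpha_{i_\ell}\in\Aut(T)$ acts without fixed points on the relevant building block -- a point of $\Delta$ in the product case, a conjugacy class (or a twisted-conjugacy datum) in the diagonal and twisted wreath cases. Since $T^k\le N$, one coordinate of any chosen cycle word may be varied freely over $T$, so the cycle word is uniformly distributed over a coset $T\gamma$ inside a subgroup $\langle T,\gamma\rangle$ of $\Aut(T)$ with socle $T$ acting faithfully and transitively on the building block; the coset form of the Boston--Shalev conjecture (also due to Fulman--Guralnick) then guarantees that the cycle word is a derangement of the building block with probability at least an absolute $\epsilon'>0$. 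Hence $\delta$ restricted to every coset $Nx$ is at least $\epsilon'$, so $\delta(G)\ge\epsilon'$; together with the almost simple case this gives $\delta(G)\ge\epsilon/\log n$. I expect the main obstacle to be exactly this last step: correctly identifying, in each O'Nan--Scott type, the building-block action and the cycle words, checking that the automorphism cosets that arise are all covered by the coset version of Boston--Shalev, and carrying the argument through uniformly -- this is the technical heart of the Fulman--Guralnick analysis.
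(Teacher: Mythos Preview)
The paper does not prove this statement: it is quoted as a known result of Luczak--Pyber and Fulman--Guralnick, with a pointer to \cite{FG} for the derivation from the Boston--Shalev conjecture. There is therefore no ``paper's own proof'' to compare your proposal against.

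That said, your sketch is an accurate outline of the Fulman--Guralnick strategy: split into O'Nan--Scott types, handle the almost simple case via Boston--Shalev together with the bound $|\Out(T)|=O(\log n)$, and in the remaining types (where $|G:N|$ can be huge) count derangements in every coset of the socle by reducing to cycle words and invoking the coset form of Boston--Shalev. Your own caveat is exactly right: the substance lies in making the cycle-word reduction precise in each type and checking that the relevant cosets are covered by the coset Boston--Shalev theorem; this is done in the Fulman--Guralnick papers rather than here. One small point to be careful about in the almost simple case: the bound $|\Out(T)|=O(\log n)$ must be argued for the \emph{given} action of degree $n$, not merely the minimal one, but since $n$ is at least the minimal faithful degree of $T$ your estimate goes through.
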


Of course, if $n$ is sufficiently large then $\eps/\log(n) > g(n)$, and so in view of \Cref{t:luczak_guralnick} and \Cref{l:reduction_primitive}, it remains to prove \Cref{t:main} in the case where $G$ is primitive affine, so $G=V\rtimes G_0$ where $V$ is elementary abelian and $G_0\le \GL(V)$ is irreducible. 

In this case, there is an amusing  phenomenon -- $G$ has many elements fixing no point of $V=\Omega$ if and only if $G_0$ has many elements fixing a point of $V\sm \{0\}$ (see \Cref{lemma_delta_alpha}).

In particular, denoting by $\alpha(G_0,V)$, or simply $\alpha(G_0)$, the proportion of elements of $G_0$ having eigenvalue $1$ on $V$, our task is to lower bound $\alpha(G_0)$. This is a well-studied problem, especially for quasisimple groups; see for example \cite{guralnick_tiep_eigenvalue1,suprunenko2007fixed,cullinan2021unisingular,spiga2017number,neumannpraeger} and the references therein. Many of these references are concerned with the so-called \textit{unisingular} representations, which is to say, representations where  $\alpha(G_0)=1$.

The following theorem about irreducible linear groups implies \Cref{t:main,t:subgroup,t:main_2} for primitive affine groups. We work over any finite field, not necessarily of prime order. Let $A(G_0,V)$ or $A(G_0)$ be the subgroup of $G_0$ generated by all elements with eigenvalue $1$, and let
\begin{equation}
	\label{eq:def_h}
h(n):=\frac{n^{1/2}+2}{2(n-1)}\sim \frac{1}{2n^{1/2}}.
\end{equation}

\begin{theorem}
\label{t:affine}
Let $d$ be a positive integer and $q$ be a prime power, with $qd$ sufficiently large. Let $V\cong\F_q^d$ and $G\le \GL_d(q)$ be irreducible, and $r$ be maximal so that $G\le \GamL_{d/r}(q^r)$. Then one of the following holds:
\begin{enumerate}
    \item $d=r$ and $G$ acts semiregularly on $V\sm\{0\}$.
    \item $G$ acts semiregularly on $V\sm\{0\}$, and $\alpha(G)\ge 1/(60(q^{d/2}-1))$ and $\delta(V\rtimes G)\ge f(q^d)$.
    \item $\alpha(G)\ge h(q^d)$, $\delta(V\rtimes G) \ge g(q^d)$, and $|A(G)|/|G|\ge 1/(q^{d/2}-1)$.
\end{enumerate}
\end{theorem}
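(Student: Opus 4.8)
We establish the three conclusions simultaneously; the common thread is that all of them reduce to producing, inside the irreducible group $G$, a controlled supply of elements with eigenvalue $1$ on $V$, together with information about the $\F_q$-dimensions of their fixed spaces. Indeed, \Cref{lemma_delta_alpha} expresses $\delta(V\rtimes G)$ in terms of $\alpha(G)$ and of the quantities $\dim_{\F_q}\ker(g-1)$ as $g$ ranges over the elements with eigenvalue $1$, so a lower bound on $\delta(V\rtimes G)$ follows once we know enough such elements and that they have reasonably large fixed spaces. The bound on $|A(G)|/|G|$ is of a different flavour: since the set of elements with eigenvalue $1$ is conjugation-invariant, $A(G)\nor G$, and every element of $G\sm A(G)$ is fixed-point-free on $V\sm\{0\}$; thus $A(G)=1$ precisely when $G$ acts semiregularly, which is what separates conclusions (1)--(2) from (3), and in the latter case the task is to bound $|G:A(G)|$ from above by $q^{d/2}-1$.

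The first dichotomy is whether $d=r$. If $d=r$ then $G\le\GammaL_1(q^d)$, a group of order $d(q^d-1)$ whose subgroups are completely explicit: $G$ has a cyclic normal subgroup $C\le\F_{q^d}^\times$ with $G/C$ cyclic of order dividing $d$, and an element $\lambda\varsigma^j$, where $\varsigma$ is the $q$-power map and $j\neq 0$, has eigenvalue $1$ on $V=\F_{q^d}$ exactly when $\lambda$ lies in a specific coset of a specific subgroup of $\F_{q^d}^\times$, whose size one reads off from $j$. A routine count over the possible $C$ and the possible cyclic complement then shows that either no nontrivial element of $G$ has eigenvalue $1$ — giving conclusion (1) — or $\alpha(G)\ge h(q^d)$; moreover the fixed spaces of the relevant $\lambda\varsigma^j$ have $\F_q$-dimension equal to a nontrivial divisor of $d$, which supplies the $\delta$-bound, and one tracks $|G:A(G)|$ directly. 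The extremal configuration is $G=\F_{q^d}^\times\rtimes\gen{\varsigma^{d/2}}$ with $d$ even, for which $\alpha(G)=h(q^d)$, $\delta(V\rtimes G)=g(q^d)$ and $|A(G)|/|G|=1/(q^{d/2}-1)$, exactly on the boundary; this simultaneously certifies that the constants in (3) cannot be improved.

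Now suppose $d>r$, so $m:=d/r\ge 2$ and, by maximality of $r$, the $\F_{q^r}$-linear part $G_0:=G\cap\GL_m(q^r)$ is absolutely irreducible over $\F_{q^r}$, with $G/G_0$ cyclic of order dividing $r$; in particular $G$ is not contained in a Singer-type cyclic subgroup. If $G$ acts semiregularly on $V\sm\{0\}$, then $V\rtimes G$ is a Frobenius group and $G$ is a fixed-point-free irreducible linear group of dimension $m\ge 2$ over $\F_{q^r}$ which is not semilinear of larger degree. By the classification of such groups (Zassenhaus; equivalently, the structure theory of Frobenius complements), once the Singer-type possibilities are excluded, $|G|$ is bounded above by an absolute constant times $q^r-1\le q^{d/2}-1$, and for $qd$ large this constant is at most $60$. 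Since in the semiregular case only the identity has eigenvalue $1$, we have $\alpha(G)=1/|G|$ and $\delta(V\rtimes G)=(1-q^{-d})/|G|$, and conclusion (2) follows.

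The remaining case — $d>r$ and $G$ not semiregular — is the crux, and here all three inequalities of (3) must be proved. Set $1\neq N:=A(G)\nor G$. The plan is to run through the Aschbacher-type structure of the absolutely irreducible group $G_0$: it is either imprimitive on a direct-sum decomposition, tensor-decomposable, a normalizer of a symplectic-type group, or almost simple modulo scalars. In the ``large'' cases — tensor, symplectic-type normalizer, almost simple — the proportion $\alpha(G)$ is bounded below far more strongly than by $h(q^d)$: for quasisimple constituents one invokes the known lower bounds on the proportion of elements with eigenvalue $1$ (as in \cite{guralnick_tiep_eigenvalue1} and \cite{neumannpraeger}), the non-semilinearity forcing $d$ large enough relative to the group for these to apply with room to spare, and for imprimitive $G_0$ one produces directly many block-stabilizing elements with eigenvalue $1$ and large fixed spaces. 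The genuinely delicate point is the few ``small'' configurations that approach the $\GammaL_1$-type example: there one needs a precise count to reach $\alpha(G)\ge h(q^d)$ and, crucially, the slightly stronger bound $|G:A(G)|\le q^{d/2}-1$, which says that the fixed-point-free elements of $G$ outside $A(G)$ — possibly of cardinality comparable to $|G|$ — nevertheless occupy only few cosets; this is obtained by analysing the action of $G$ on the $N$-homogeneous components of $V$ via Clifford theory and bounding the resulting cyclic-type quotient. I expect this last step — extracting the sharp constants of (3) uniformly across the borderline cases — to be the main obstacle; everything else is either completely explicit (the $\GammaL_1$ analysis) or follows comfortably from existing eigenvalue-$1$ estimates.
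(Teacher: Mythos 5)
The proposal correctly identifies the top-level case split ($d=r$ versus $d>r$, semiregular versus not) and the flavour of each step, and gets several small things right (e.g.\ that $A(G)=1$ exactly when $G$ is semiregular, since all elements with eigenvalue $1$ lie in $A(G)$). However there are genuine gaps, and the route through the crux case is different from and substantially vaguer than the paper's.

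First, the $\GammaL_1$ case is not ``a routine count'': the paper's \Cref{prop:GamL_1} is a genuine piece of elementary number theory (Section~\ref{sec:GammaL1}, via \Cref{l_r_part_smaller,l_r_odd_grows_by_one,l_r_even_growth}), and the sharp constants $h(q^d)$ and $g(q^d)$ are extracted only after a careful analysis of the $2$-part and the coprime-to-$f$ part of the relevant indices. Your sketch does correctly pinpoint the extremal example, but it gives no indication of why no other configuration beats it.

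Second, the Zassenhaus route for the semiregular case with $d>r$ is not justified as stated, and is not the paper's route. Zassenhaus's classification of Frobenius complements does not by itself give $|G|\le 60(q^{r}-1)$: in the non-solvable case $G$ has a normal subgroup of index at most $2$ of the shape $\SL_2(5)\times M$ with $M$ metacyclic of order coprime to $30$, and bounding $|M|$ by (a fraction of) $q^r-1$ requires exploiting absolute irreducibility of the relevant tensor factor together with the maximality of $r$; none of this is in your sketch, and the naive bound coming straight from Zassenhaus overshoots the constant $60$. The paper instead handles the semiregular case inside the same machinery used for the non-semiregular case: the constant $1/(60(q^{d/2}-1))$ arises concretely from \Cref{l:sl2(5)} (normalizers of $\SL_2(5)$ inside $\GammaL_2$), with the other small groups ($Q_8$, $\SL_2(q_0)$) contributing weaker or better bounds, while for $d/r\ge 3$ the general argument already yields the stronger conclusion~(3).

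Third — and this is the main gap — for the non-semiregular crux case you say ``run through the Aschbacher-type structure of $G_0$'' and acknowledge that the ``small configurations'' are the obstacle, but you do not supply the idea that actually resolves them. The paper first reduces to primitive linear groups (\Cref{l:general_imprimit}), then works with $L=\mathrm F^*(H)$ and its tensor factorisation $V=W_1\otimes\cdots\otimes W_t$ with central/extraspecial/quasisimple factors $G_i$. The crucial technical device, which is absent from your proposal, is the partition of the indices into the sets $\Omega_1,\ldots,\Omega_5$: the factors of bounded order are grouped together so that they cost only a fixed constant $f(C)$, and the factors of large order (where \Cref{prop:combination} applies) produce enough of a surplus — via inequalities like \eqref{eq:g(C)}, \eqref{eq:SL2}, \eqref{eq:SL2newcondition} — to absorb that cost and still beat $2\log(q)\,|H:L|\,q^{-d/2}$. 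Without this trick there is no way to control a tensor product of many small factors, and your proposal offers no substitute. (You also assert that $G_0=G\cap\GL_m(q^r)$ is absolutely irreducible over $\F_{q^r}$; the paper only extracts absolute irreducibility of the $L$-components after invoking Clifford's theorem and the maximality of $r$, not of $G_0$ itself.)

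In short: the case decomposition is right in spirit, but the $\GammaL_1$ count, the Zassenhaus bound, and — above all — the handling of many small tensor factors are all missing or unjustified, and the last of these is exactly the crux of the paper's argument.
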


(In (2) and (3), $\delta(V\rtimes G)=\delta(V\rtimes G,V)$ refers to the affine action on $V$.) We already remarked that the equalities in the bounds for $\delta(V\rtimes G)$ can be attained. The same is true for the bounds for $\alpha(G)$, see \Cref{ex:bound_sharp,rem:SL_2(5)_frobenius}.

In most cases (e.g., for $d/r\ge 3$) we will prove the bound for $\alpha(G)$, and the required bound for $\delta(V\rtimes G)$ will follow immediately.

In order to prove \Cref{t:affine}, we will reduce the problem to the case of primitive linear groups and exploit the powerful structure theory of these groups, see \Cref{sec_primitive_linear}. The reduction is elementary (\Cref{l:general_imprimit}), so assume $G$ is primitive. At this point, after some further reductions, we will focus on the generalized Fitting subgroup $L$ (whose index in $G$ is relatively small), and we will count elements with eigenvalue $1$ in $L$. Assuming for simplicity that $L$ acts absolutely irreducibly, we have $L=G_1\circ \cdots \circ G_t$, where $G_i$ is either central or extraspecial or a power of a quasisimple group, and $V=V_1\otimes \cdots \otimes V_t$. This will essentially reduce the problem to lower bounding $\alpha(H)$ when $H\le \GamL_1(q)$ (\Cref{sec:GammaL1}), or $H\le \GamL_2(q)$ (\Cref{sec:GamL2}), or $H$  extraspecial or quasisimple (\Cref{sec:extraspecial,sec:quasisimple}). 

A nice feature of the proof is that we will be able to prove \Cref{t:affine} only by working with large extraspecial or quasisimple groups (and in many cases, bounds much stronger than we need hold). Note, however, that in the central product $L=G_1\circ \cdots \circ G_t$, there may certainly be factors that have small order, and to which our bounds do not apply. In order to handle this issue, we will group together all the factors of small order (thereby paying a constant); what we gain in the factors of large order will be enough to compensate.

We wonder whether, in the notation of \Cref{t:affine},  the bound $\alpha(G)\gg 1/q^r$ holds. There are many examples where a matching upper bound holds, and the lower bound holds if $d/r\le 2$ (by \Cref{t:affine}) and for many quasisimple groups in defining characteristic, see \Cref{l:codimension_1} and also \cite{guralnick_tiep_eigenvalue1}. This problem could be considered as an analogue of the Boston--Shalev conjecture for affine groups over fields of bounded size.

\subsection*{Notation} We write $f=O(g)$ or $f\ll g$ if there exists a positive absolute constant $C$ such that $f\le Cg$. We number equations by sections, so for example \Cref{t:main} is not interchangeable with \eqref{eq:def_g}.

\subsection*{Acknowledgements} I thank Bob Guralnick for suggesting the proof of \Cref{l:codimension_1} and for  interesting discussions, and Michael Larsen for an explanation on the Lang--Weyl estimate for Suzuki and Ree groups.

\section{Reduction to primitive groups}

\begin{lemma}
\label{l:reduction_primitive}
    Assume that \Cref{t:main} holds for primitive groups. Then, $\delta(G)>g(n)$ for every imprimitive permutation group $G$  of sufficiently large degree $n$.
\end{lemma}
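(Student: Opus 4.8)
The idea is to pass from the imprimitive group $G$ to a primitive group $\bar G$ acting on a nontrivial block system, where the hypothesis (i.e., the assumed validity of \Cref{t:main} for primitive groups) applies, and to lift a good proportion of derangements. Concretely, let $G$ act on $\Omega$ with $|\Omega| = n$, and let $\Sigma = \{B_1, \dots, B_m\}$ be a system of blocks with $m \ge 2$ blocks of common size $k = n/m$, chosen so that $G$ acts primitively on $\Sigma$ (take $\Sigma$ minimal, i.e., $B_i$ a maximal block). Write $\bar G \le \Sym(\Sigma)$ for the image of the induced action, a primitive group of degree $m$, and let $K = \ker(G \to \bar G)$. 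Every element of $G$ whose image in $\bar G$ is a derangement on $\Sigma$ is itself a derangement on $\Omega$ (it moves every block, so fixes no point), so
\[
\delta(G) \ge \delta(\bar G, \Sigma).
\]
If $\bar G$ is not primitive Frobenius, the assumed \Cref{t:main} gives $\delta(\bar G) \ge g(m)$, and since $g$ is decreasing and $m \le n/2$, this already yields $\delta(G) \ge g(n/2) > g(n)$ once $n$ is large — in fact with room to spare, since $g(n/2)/g(n) \to \sqrt 2$.

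The remaining case is $\bar G$ primitive Frobenius of degree $m$. Here I would use the Frobenius structure: $\bar G = \bar N \rtimes \bar H$ with $\bar N$ regular of order $m$ and $\bar H$ a point stabilizer, so $\delta(\bar G) = |\bar H|/m$, which can be as small as $1/m$ and hence useless by itself. Instead one must use the action inside the blocks. The point stabilizer $G_\alpha$ (for $\alpha \in B_1$, say) is contained in the block stabilizer $G_{B_1}$, which has index $m$ in $G$ and maps onto $\bar H = \bar G_{B_1}$ with kernel $K$. The key observation is that $G_{B_1}$ acts on $B_1$ (a set of size $k = n/m$) with point stabilizer $G_\alpha$, so the number of derangements of $G$ on $\Omega$ lying in $G_{B_1}$ is at least the number of elements of $G_{B_1}$ that are derangements on $B_1$; and any $g \in G$ not in the union $\bigcup_i G_{B_i}$ of block stabilizers moves all blocks and so is a derangement on $\Omega$. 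Thus
\[
\delta(G) \ge \frac{1}{m}\,\delta(G_{B_1}, B_1) + \Big(1 - \frac{|\bar H|}{m}\cdot\frac{m}{|\bar H|}\cdot\frac{1}{m}\Big)\cdot(\text{something}),
\]
more cleanly: counting cosets of $G_{B_1}$, the elements mapping to a non-identity element of $\bar H$-support that nonetheless fix the block $B_1$ are the nontrivial elements of $\bigcup_i G_{B_i}$, a proportion at most $m \cdot \tfrac{1}{m} = 1$ — so instead I would argue that a positive proportion ($\ge 1 - 1/m \ge 1/2$) of elements of $G$ fix no block and hence are automatically derangements, unless $G$ itself is close to Frobenius on $\Sigma$. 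Since $\bar G$ is Frobenius, the proportion of $\bar g$ fixing a block is exactly $|\bar H|/m \cdot (m-1)/|\bar H| \cdot \text{(point count)}$… here I would just use: proportion of $\bar g \in \bar G$ with a fixed point on $\Sigma$ equals $1 - \delta(\bar G) = 1 - |\bar H|/m$, which is $\ge 1 - 1/m$ only when $|\bar H|$ is small; when $|\bar H|$ is large, $\delta(G) \ge \delta(\bar G) = |\bar H|/m$ is already large. So split on whether $|\bar H| \ge m^{1/2}$: if yes, $\delta(G) \ge |\bar H|/m \ge m^{-1/2} \ge (n/2)^{-1/2} > g(n)$ for $n$ large; if $|\bar H| < m^{1/2}$, then at least $1 - m^{-1/2}$ of $G$ fixes no block, giving $\delta(G) \ge 1 - m^{-1/2}$, which swamps $g(n)$.

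There is a gap in the last dichotomy when $m$ is bounded (so $k = n/m$ is large but $m$ is, say, $2$); then $|\bar H| < m^{1/2}$ forces $\bar H = 1$, i.e. $\bar G$ regular of prime degree $m$ — but a regular group is Frobenius only in the degenerate sense, and actually a primitive Frobenius group has $\bar H \ne 1$ by definition, so $|\bar H| \ge 2$; iterating, the truly delicate regime is $m$ small and fixed. I expect this to be the main obstacle: when $m$ is a fixed small prime, one cannot win from $\bar G$ alone and must recurse into the block action of $G_{B_1}$ on $B_1$ (of large degree $n/m$), applying the primitive-case hypothesis or induction on the degree there. I would set this up as an induction on $n$: either $G_{B_1}$ acts on $B_1$ in a way covered by the inductive hypothesis, yielding $\delta(G_{B_1}, B_1) \ge g(n/m)$ and hence, combined with the derangements outside all block stabilizers, $\delta(G) \ge (1/m) g(n/m) + (1 - |\bar H|/m) \ge g(n)$ for $n$ large (checking the elementary inequality $g(n/m)/m + 1 - |\bar H|/m \ge g(n)$, which holds comfortably since $1 - |\bar H|/m$ is bounded below by a positive constant unless $|\bar H|$ is large, the case already handled); or $G_{B_1}$ on $B_1$ falls into the primitive-Frobenius exception, which constrains the structure enough to handle by hand. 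The routine but necessary work is verifying these explicit inequalities relating $g(n)$, $g(n/m)$, and $|\bar H|/m$ across the finitely many small-$m$ cases, together with confirming that the "sufficiently large $n$" thresholds compose correctly through the induction.
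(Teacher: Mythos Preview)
Your overall strategy---pass to a primitive quotient $\bar G$ on a maximal block system, use the hypothesis when $\bar G$ is not Frobenius, and dig into block stabilizers when it is---matches the paper's. But the Frobenius analysis goes wrong at the level of the basic formula: for a Frobenius group of degree $m$ with point stabilizer $\bar H$, the derangements are exactly the $m-1$ nontrivial elements of the regular normal subgroup, so $\delta(\bar G) = (m-1)/(m|\bar H|) \approx 1/|\bar H|$, not $|\bar H|/m$. This inverts your dichotomy: it is when $|\bar H|$ is \emph{small} that $\delta(\bar G)$ alone suffices, and when $|\bar H|$ is large that one must go inside the blocks. Relatedly, the case you flag as the main obstacle---$m$ bounded---is actually trivial: by Cameron--Cohen $\delta(G) \ge \delta(\bar G) \ge 1/m \gg 1 > g(n)$.

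With the formula corrected, your threshold $m^{1/2}$ is also in the wrong place. The paper splits on whether $|\bar H|$ (equivalently $(m-1)/a$) exceeds roughly $n^{1/2}$, not $m^{1/2}$. When $|\bar H| \lesssim n^{1/2}$ one gets $\delta(G) \ge \delta(\bar G) \approx 1/|\bar H| > g(n)$ directly. In the remaining regime $|\bar H| \gtrsim n^{1/2}$, the paper sums over \emph{all} block stabilizers, using that in a Frobenius quotient the $G_{B_i}$ meet only in the kernel $K$, to obtain
\[
\delta(G,\Omega) \;\ge\; \sum_{i=1}^m\Big(\frac{\delta(G_{B_i},B_i)}{m} - \frac{1}{|\bar G|}\Big) \;=\; \delta(G_{B_1}, B_1) - \frac{a}{m-1};
\]
this is the inequality your write-up gropes toward but never states (your claim that derangements of $G_{B_1}$ on $B_1$ are automatically derangements on $\Omega$ is false for elements of $K$, which is exactly what the $a/(m-1)$ term accounts for). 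Then Cameron--Cohen on $B_1$ gives $\delta(G_{B_1},B_1) \ge 1/k$, and the assumption $|\bar H| \gtrsim n^{1/2}$ forces $m \gg k a^2$, which is precisely what makes $1/k - a/(m-1) > g(n)$ hold. No induction on $n$ is needed.
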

In fact, the proof will show that if $G$ is imprimitive then $\delta(G)>1/(1.5n^{1/2})$.

\begin{proof}
 Let $\Omega= \Omega_1\cup \cdots \cup \Omega_t$ be a maximal system of imprimitivity, with $|\Omega_i|=m\ge 2$ and $t\ge 2$, let $\Delta=\{\Omega_1, \ldots, \Omega_t\}$ and $\rho\colon G \to \Sym(\Delta)\cong S_t$.  Then, $G^\rho$ is primitive.

By \cite{cameron_cohen}, we have $\delta(G,\Omega)\ge \delta(G^\rho,\Delta)\ge 1/t$. If $t$ is bounded this is $\gg 1$ and we are done, so suppose that $t$ is large.

\textbf{Case 1:} $G^\rho$ is not Frobenius. Then, by \Cref{t:main} $\delta(G,\Omega)\ge \delta(G^\rho,\Delta) \ge g(t)$. Since $n=mt\ge 2t$, we have $g(t) > g(n)$ and we are done. 

\textbf{Case 2:} $G^\rho$ is Frobenius. We have $|G^\rho|= t(t-1)/a$ for a divisor $a$ of $t-1$, and $\delta(G^\rho, \Delta) = a/t$.

Assume first $t/a < 1.5n^{1/2}$. Then $a/t > 1/(1.5n^{1/2}) > g(n)$ and we are done. Assume then $t/a \ge 1.5n^{1/2}$, i.e., $t\ge 2.25ma^2$. For each $i=1, \ldots, t$, let $H_i=\text{Stab}_G(\Omega_i)$, so $H_i$ acts transitively on $\Omega_i$. Since $G^\rho$ is Frobenius, a nontrivial element of $G^\rho$ fixes at most one point. In particular,
\[
\delta(G,\Omega) \ge \sum_{i=1}^t \left( \frac{\delta(H_i, \Omega_i)}{|G:H_i|} - \frac{1}{|G^\rho|} \right) = \delta(H_1, \Omega_1)- \frac{a}{t-1}.
\]
Now fix $0<\eps < 1-1/1.1 \le 1-1/(1.1a)$ (e.g., $\eps=0.09$) and take $t$ large so that $(t-1)/t> 1-\eps$. By \cite{cameron_cohen}, we have $\delta(H_1, \Omega_1)\ge 1/m$, and the assumption $t\ge 2.2ma^2$ implies 
\begin{align*}
\frac{1}{m} - \frac{a}{t-1} &> \frac{1}{m} - \frac{a}{(1-\eps)t} 
\ge \frac{1}{m}\left( 1 - \frac{1}{(1-\eps)2.2a} \right) \\
&\quad > \frac{1}{2m} \\
&\quad\quad > \frac{1.4}{2n^{1/2}}\\
&\quad\quad\quad > g(n),
\end{align*}
which concludes the proof.    
\end{proof}

\section{Preliminaries for affine groups}
\label{sec:affine}

Let $G$ be a finite group and let $V$ be an $\F_qG$-module of dimension $d<\infty$. Denote by $\pi$ the permutation character of $G$ acting on $V$, and define $\eta(G)=\eta(G,V)$ as the inverse of the harmonic mean of $\pi$, i.e.,
\begin{equation}
    \eta(G)=\frac{1}{|G|} \sum_{g\in G} \frac{1}{\pi(g)}.
\end{equation}

\begin{lemma}
\label{derangements_eta}
$\delta(V\rtimes G)=1-\eta(G)$.
\end{lemma}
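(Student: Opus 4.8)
The plan is a direct double count over the affine group $V\rtimes G$ acting on $\Omega=V$ via $(v,g)\cdot w=v+gw$. First I would fix $g\in G$ and ask for which translation parts $v\in V$ the element $(v,g)$ is a derangement. Since $(v,g)$ fixes $w$ exactly when $v=w-gw=(1-g)w$, the element $(v,g)$ has a fixed point if and only if $v$ lies in the image of the $\F_q$-linear endomorphism $1-g$ of $V$; and in that case the fixed points form a coset of $\ker(1-g)$, so there are exactly $|\ker(1-g)|$ of them.

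The key (elementary) observation is that $\pi(g)=|\mathrm{Fix}_V(g)|=|\ker(1-g)|$, whence by rank–nullity $|\mathrm{im}(1-g)|=q^d/\pi(g)$. Thus, for each fixed $g$, the number of $v\in V$ for which $(v,g)$ is a derangement equals $q^d-q^d/\pi(g)=q^d\bigl(1-1/\pi(g)\bigr)$.

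Summing over $g\in G$ and dividing by $|V\rtimes G|=q^d|G|$ then gives
\[
\delta(V\rtimes G)=\frac{1}{q^d|G|}\sum_{g\in G}q^d\Bigl(1-\frac{1}{\pi(g)}\Bigr)=1-\frac{1}{|G|}\sum_{g\in G}\frac{1}{\pi(g)}=1-\eta(G),
\]
which is the claim. There is essentially no obstacle here: the only point requiring any care is the identification $\pi(g)=|\ker(1-g)|$ together with the fact that a linear endomorphism of $\F_q^d$ whose kernel has size $k$ has image of size $q^d/k$, after which the computation is immediate.
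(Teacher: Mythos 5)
Your proof is correct and is essentially the same as the paper's: both fix $g\in G$, observe that $gv$ (or $(v,g)$) is a derangement iff $v$ lies outside the image of $1-g$ on $V$ (the paper writes this subspace as $[g,V]$), apply rank--nullity via $\pi(g)=|\ker(1-g)|$, sum over $g$, and divide by $|V\rtimes G|$. The only differences are notational (left versus right action, writing $|\mathrm{im}(1-g)|=q^d/\pi(g)$ rather than $\dim[g,V]=d-c(g)$).
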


\begin{proof}
Let $gv\in V\rtimes G$. We have $u^{gv}= ug+v$, so $u^{gv}=u$ if and only if $v=u-ug$. It follows that $gv$ is a derangement if and only if $v\notin [g,V]$. Let $c(g):=\dim(\C_V(g))$, so $\pi(g)=q^{c(g)}$. Since $[g,V]$ is a subspace of $V$ of dimension $d-c(g)$, and counting derangements  by summing over all $g\in G$ and all $v\not\in [g,V]$, we have
\[
\delta(V\rtimes G) = \frac{1}{|G|} \sum_{g\in G} \frac{q^d - q^{d-c(g)}}{q^d} = 1- \eta(G)
\]
as wanted.
\end{proof}

Recall that $\alpha(G)$ denotes the proportion of elements of $G$ with eigenvalue $1$ on $V$, and
$A(G)$ denotes the subgroup of $G$ generated by all elements with eigenvalue $1$.

\begin{lemma}
\label{lemma_delta_alpha}
 \[
\left(1-\frac{1}{q}\right)\alpha(G) \le \delta(V\rtimes G)\le \alpha(G).
\]
\end{lemma}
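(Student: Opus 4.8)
The plan is to work directly from \Cref{derangements_eta}, which tells us $\delta(V\rtimes G) = 1 - \eta(G) = \frac{1}{|G|}\sum_{g\in G}\bigl(1 - q^{-c(g)}\bigr)$, where $c(g) = \dim \C_V(g)$. The key observation is that the summand $1 - q^{-c(g)}$ depends only on whether $g$ has eigenvalue $1$ on $V$, in the sense that it vanishes exactly when $c(g) = 0$, i.e.\ when $g$ has no nonzero fixed vector, and is positive otherwise.

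First I would split $G$ into the set $E$ of elements having eigenvalue $1$ on $V$ (equivalently $c(g)\ge 1$) and its complement. For $g\notin E$ we have $c(g) = 0$, so the summand is $0$. For $g\in E$ we have $1 \le c(g) \le d$, hence
\begin{equation}
\label{eq:sandwich}
1 - \frac1q \;\le\; 1 - q^{-c(g)} \;\le\; 1.
\end{equation}
Summing \eqref{eq:sandwich} over $g\in E$ and dividing by $|G|$, and noting $|E|/|G| = \alpha(G)$, immediately gives
\[
\left(1 - \frac1q\right)\alpha(G) \;\le\; \frac{1}{|G|}\sum_{g\in E}\bigl(1 - q^{-c(g)}\bigr) \;=\; \delta(V\rtimes G) \;\le\; \alpha(G),
\]
which is exactly the claimed inequality. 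The lower bound uses $c(g)\ge 1$ on $E$; the upper bound uses $1 - q^{-c(g)} < 1$ together with the vanishing of the summand off $E$.

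There is essentially no obstacle here: the only thing to be careful about is recording that the terms with $c(g) = 0$ contribute nothing, so that the sum over all of $G$ reduces to a sum over $E$, and that $c(g)\ge 1$ precisely characterizes membership in $E$ (i.e.\ having eigenvalue $1$). Everything else is the elementary two-sided bound \eqref{eq:sandwich} on $1 - q^{-c(g)}$ for $1\le c(g)\le d$.
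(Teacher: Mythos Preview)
Your proof is correct and takes essentially the same approach as the paper: both split $G$ according to whether $g$ has eigenvalue $1$, use that $\pi(g)=1$ (equivalently $c(g)=0$) otherwise and $\pi(g)\ge q$ (equivalently $c(g)\ge 1$) on $E$, and apply \Cref{derangements_eta}. The paper phrases it as a two-sided bound on $\eta$ before subtracting from $1$, while you bound the summands of $1-\eta$ directly, but the arguments are algebraically identical.
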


\begin{proof}
Writing $\alpha=\alpha(G)$ and $\eta=\eta(G)$, note that
\[
\frac{1}{|G|}\left(1-\alpha\right)|G|\le \eta \le \frac{1}{|G|}\left( \frac{\alpha|G|}{q}+(1-\alpha)|G|\right) = 1-\left(1-\frac{1}{q}\right)\alpha.
\]
Now just apply Lemma \ref{derangements_eta}.
\end{proof}

In particular, as already noticed in the introduction, in most cases in order to prove \Cref{t:affine} it will be sufficient to bound $\alpha(G)$. 

The following is \cite[Proposition 3.1]{bailey2021groups}.

\begin{lemma}
	\label{l:subgroup_eigenvalue1}
$|V\rtimes G : D(V\rtimes G)| = |G:A(G)|$. 
\end{lemma}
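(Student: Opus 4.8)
The plan is to read off the structure of $D(V\rtimes G)$ from the description of derangements obtained in the proof of \Cref{derangements_eta}, and then compare with $A(G)$ via the natural projection $\phi\colon V\rtimes G\to G$ with kernel $V$.

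First I would recall from that proof that an element $gv\in V\rtimes G$ (with $g\in G$, $v\in V$) is a derangement on $V$ if and only if $v\notin[g,V]$. In particular, such a $v$ exists precisely when $[g,V]\neq V$, i.e.\ when $\dim\C_V(g)\ge 1$, i.e.\ when $g$ has eigenvalue $1$ on $V$. Conversely, if $g$ has eigenvalue $1$ then $[g,V]\subsetneq V$, so one may choose $v\notin[g,V]$, and then $gv$ is a derangement with $\phi(gv)=g$. Hence the image under $\phi$ of the set of all derangements of $V\rtimes G$ is exactly the set of elements of $G$ having eigenvalue $1$ on $V$.

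Next I would note that $V\le D(V\rtimes G)$: taking $g=1$ we have $[1,V]=0$, so every nonzero $v\in V$ is a derangement, and these generate $V$. Writing $D:=D(V\rtimes G)$, since $V=\ker\phi$ is contained in $D$ we get $D=\phi^{-1}(\phi(D))$. Because $D$ is generated by the derangements $gv$, the subgroup $\phi(D)$ is generated by their images $g$, which by the previous step are precisely the elements of $G$ with eigenvalue $1$ on $V$; thus $\phi(D)=A(G)$. Therefore $D=\phi^{-1}(A(G))=V\rtimes A(G)$, and consequently
\[
|V\rtimes G:D(V\rtimes G)|=|V\rtimes G:V\rtimes A(G)|=|G:A(G)|.
\]

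There is no real obstacle here; the only points needing (minor) care are the two inclusions showing that $\phi$ maps the generating set of derangements onto the generating set of $A(G)$, and the observation that $V\subseteq D(V\rtimes G)$, which is what lets the index computation descend cleanly from $V\rtimes G$ to $G$.
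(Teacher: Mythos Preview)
Your proof is correct and follows essentially the same approach as the paper's own proof: both note that $V\le D(V\rtimes G)$, use the characterisation of derangements $gv$ from the proof of \Cref{derangements_eta} to see that the $G$-component of a derangement is precisely an element with eigenvalue~$1$, and conclude that $D(V\rtimes G)=V\rtimes A(G)$. Your write-up is simply a more detailed version of the paper's terse three-line argument, making the role of the projection $\phi$ and the inclusion $V\le D$ explicit.
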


\begin{proof}
We have $V\le D(V\rtimes G)$, and as noted in the proof of \Cref{derangements_eta}, $gv$ is a derangement if and only if $v\not\in [g,V]$; therefore for $g\in G$ there exists $v$ such that $gv$ is a derangement if and only if $g\in A(G)$.
	\end{proof}

\section{Proof of \Cref{t:affine} modulo preliminary results}
\label{sec_primitive_linear}

Here we reduce the proof of \Cref{t:affine} to the following three results, that we will prove in \Cref{sec:GammaL1,sec:GamL2,sec:extraspecial,sec:quasisimple}.

\begin{proposition}
\label{prop:GamL_1}
\Cref{t:affine} holds if $d=r$, i.e., $G\le \GamL_1(q^d)$.
\end{proposition}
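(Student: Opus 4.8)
Here $G \le \GamL_1(q^d)$, so $V \cong \F_{q^d}$ as an $\F_q$-vector space, and $G$ sits inside the group $\GamL_1(q^d) = \F_{q^d}^\times \rtimes \Gal(\F_{q^d}/\F_q)$, which has order $(q^d-1)d$. The plan is to analyze the structure of $G$ explicitly. Let $G_0 = G \cap \F_{q^d}^\times$, a cyclic group of some order $e \mid q^d - 1$, and let $G/G_0$ embed in $\Gal(\F_{q^d}/\F_q) \cong \mathbb{Z}/d$; say the image is generated by the $q^s$-power Frobenius for some $s \mid d$, so $|G/G_0| = d/s =: k$. The elements of $G$ then come in $k$ cosets: the coset $G_0$ itself (scalar multiplications $x \mapsto \lambda x$), and cosets of the form $G_0 \phi^j$ where $\phi$ is a fixed element inducing the $q^s$-power map, $1 \le j < k$.

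**Counting elements with eigenvalue $1$, coset by coset.** An element $x \mapsto \lambda x$ of $G_0$ has eigenvalue $1$ on $\F_{q^d}$ (over $\F_q$) if and only if $\lambda = 1$; so $G_0$ contributes exactly one such element. For a coset $G_0\phi^j$ with $j \ge 1$: an element acts as $x \mapsto \lambda x^{q^{sj}}$, and this fixes a nonzero vector iff $\lambda x^{q^{sj}} = x$ has a nonzero solution, i.e. $\lambda = x^{1 - q^{sj}}$ for some $x \ne 0$. The number of $\lambda \in G_0$ of this form is $|G_0 \cap \{x^{1-q^{sj}} : x \in \F_{q^d}^\times\}|$; the image $\{x^{1-q^{sj}}\}$ is the subgroup of $\F_{q^d}^\times$ of index $\gcd(q^d-1, q^{sj}-1) = q^{\gcd(d,sj)}-1$, and intersecting with $G_0$ of order $e$ gives $e / \gcd(e, q^{\gcd(d,sj)}-1)$ elements, but one must be slightly careful about which $\lambda$ actually lie in $G$ — since $\phi^j \in G$, the coset $G_0\phi^j$ is exactly the set $\{\lambda \phi^j : \lambda \in G_0\}$, so the count of elements of this coset with eigenvalue $1$ is precisely $|G_0 \cap \mathrm{Im}(x \mapsto x^{1-q^{sj}})|$. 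Writing $m_j := \gcd(e, q^{\gcd(d,sj)}-1)$ we get this count equals $e/m_j \cdot [\text{correction}]$ — actually I should double check: the map $x\mapsto x^{1-q^{sj}}$ has image of size $(q^d-1)/(q^{\gcd(d,sj)}-1)$, so $|G_0 \cap \mathrm{Im}| = e \cdot (q^{\gcd(d,sj)}-1)/(q^d-1) \cdot \gcd\big(\tfrac{q^d-1}{q^{\gcd(d,sj)}-1} \cdot \tfrac{e}{q^d-1}, \cdot\big)$; cleaner is just: it is the number of $\lambda\in G_0$ that are $(1-q^{sj})$-th powers, which since $G_0$ is cyclic of order $e$ inside the cyclic group $\F_{q^d}^\times$ equals $\gcd\big(e,\, (q^d-1)/(q^{\gcd(d,sj)}-1)\big) \cdot [\,e/(q^d-1)\,]$... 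Let me just denote this count $N_j$ and note $N_j \ge 1$ always (since $\lambda = 0$... no, $\lambda=$ value at $x$ with $x^{1-q^{sj}}=1$, i.e. $x \in \F_{q^{\gcd(d,sj)}}^\times \cap G_0$-ish; at minimum $\lambda = 1$ works, giving $N_j \ge 1$). Summing, $\alpha(G) \cdot |G| = 1 + \sum_{j=1}^{k-1} N_j$.

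**Deriving the dichotomy.** The semiregular (equivalently Frobenius, equivalently "$d=r$" fails to be improved) case is exactly when $\alpha(G)|G| = 1$, i.e. every $N_j = 0$ for $j \ge 1$ — but since $N_j \ge 1$ always when $j \ge 1$, this forces $k = 1$, i.e. $G = G_0$ is a group of scalars. When $G$ consists of scalars it acts semiregularly (freely) on $V \setminus \{0\}$, giving alternative (1) with $d = r$ — consistent, since a scalar group lies in $\GamL_1(q^d)$ with $r = d$ maximal. Otherwise $k \ge 2$, and we must show alternative (2) or (3) holds. The key estimate: $|G| = ek \le (q^d-1)d$, and $\alpha(G) = \tfrac{1}{ek}\big(1 + \sum_{j=1}^{k-1}N_j\big) \ge \tfrac{2}{ek} \ge \tfrac{2}{e d}$. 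If $e \le (q^{d/2}+... )$-type bound this already beats $h(q^d) \sim 1/(2q^{d/2})$; the delicate range is $e$ close to $q^d-1$ and $d$ small. When $d$ is bounded (say $d \le 60$ or so), $k \ge 2$ and $\alpha(G) \ge 2/(ek)$ with a more careful lower bound on $\sum N_j$ — in particular for $j$ with $\gcd(d,sj)$ small, $N_j$ is of order $e/q^{\gcd(d,sj)}$, large — should give $\alpha(G) \gg 1/q^{d/2}$ or at least $\ge 1/(60(q^{d/2}-1))$, landing in (2). When $d$ is large, even $\alpha(G) \ge 2/(ed) \ge 2/((q^d-1)d)$ is too weak, so here I'd argue that $k$ large forces, for the "diagonal" $j = k/2$ (if $k$ even) or small-$\gcd$ values of $j$, that $N_j \ge (q^{d/2}-1)/m$-type quantities accumulate; alternatively note $r = d$ forces $G \not\le \GamL_{d/r'}(q^{r'})$ for $r' > d$ which is vacuous, so the real content is comparing against $h$, and one shows $\sum_{j} N_j$ is large enough that $\alpha(G) \ge h(q^d)$ and $|A(G)|/|G| \ge 1/(q^{d/2}-1)$, using that $A(G)$ contains $G_0 \cap (\text{appropriate power subgroups})$ together with at least one Frobenius-type element, so $|A(G)| \ge 2$, and $|G| = ek \le (q^d-1)d$ — wait, this gives $|A(G)|/|G| \ge 2/((q^d-1)d)$, not obviously $\ge 1/(q^{d/2}-1)$. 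So the genuine hard part is this: I must show that whenever $G$ is \emph{not} all scalars, $A(G)$ is large — specifically that $|A(G)| \ge |G|/(q^{d/2}-1)$.

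**The main obstacle, and how I'd attack it.** The crux is the lower bound $|A(G)| \ge |G|/(q^{d/2}-1)$ (and the parallel bound for $\alpha$). Here is the idea: $A(G) \nor G$, and $A(G) \cap G_0$ contains $\lambda$ whenever $\lambda = x^{1-q^{sj}}$ for some $j \ge 1$, $x \ne 0$ — i.e. $A(G) \cap G_0 \supseteq G_0 \cap \langle \text{all subgroups of index } q^{\gcd(d,sj)}-1,\ 1 \le j \le k-1\rangle$. The subgroup of $\F_{q^d}^\times$ generated by these images has index $\gcd_j(q^{\gcd(d,sj)}-1) = q^{\gcd_j \gcd(d,sj)} - 1$; since $s \mid d$ and $s < d$ (as $k \ge 2$), taking $j$ ranging over $1, \dots, k-1$, the gcd of the $\gcd(d, sj)$ is a proper divisor of $d$, hence at most $d/p$ for the smallest prime $p \mid k$... actually $\gcd(d, s) = s$ already, so the relevant index divides $q^s - 1 \le q^{d/2}-1$. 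Therefore $|A(G) \cap G_0| \ge e/(q^{d/2}-1)$, and since $A(G)$ also contains at least one element of $G \setminus G_0$ (any Frobenius-type element with eigenvalue $1$, e.g. $x \mapsto x^{q^s}$ itself, which fixes all of $\F_{q^s}$), we get $|A(G)| \ge 2e/(q^{d/2}-1) \ge ... $ — and we want $\ge ek/(q^{d/2}-1) = |G|/(q^{d/2}-1)$; for $k = 2$ this is exactly $2e/(q^{d/2}-1)$, matching; for larger $k$ I'd need $|A(G) \cap G_0| \ge e/(q^{d/2}-1)$ times more Frobenius cosets landing in $A(G)$, which indeed happens since $A(G)/(A(G)\cap G_0)$ contains the image of every $\phi^j$ (each $\phi^j$ for $j \ge 1$ has eigenvalue $1$, fixing $\F_{q^{\gcd(sj,d)}}$), so $A(G)/(A(G)\cap G_0) = G/G_0$, giving $|A(G)| = k \cdot |A(G)\cap G_0| \ge ek/(q^{d/2}-1) = |G|/(q^{d/2}-1)$, as needed. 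A symmetric refinement of the coset-counting then yields $\alpha(G) \ge h(q^d)$ (this is where I expect the arithmetic to be fussiest — pinning down $\sum_j N_j$ precisely enough to beat $h(q^d) = (q^{d/2}+2)/(2(q^d-1))$ rather than just $g$). Once $\alpha(G) \ge h(q^d)$ is in hand, $\delta(V \rtimes G) \ge (1 - 1/q)\alpha(G) \ge g(q^d)$ follows from \Cref{lemma_delta_alpha} after checking $(1-1/q)h(q^d) \ge g(q^d)$ for $q^d$ large (it is, since both are $\sim 1/(2q^{d/2})$ and the lower-order terms work out). So the plan is: (i) set up the coset decomposition; (ii) observe $N_j \ge 1$ for $j \ge 1$, giving the clean dichotomy scalars-vs-not; (iii) in the non-scalar case, prove $A(G)/(A(G)\cap G_0) = G/G_0$ and $|A(G)\cap G_0| \ge e/(q^{d/2}-1)$, yielding the $|A(G)|/|G|$ bound and placing us in (2) or (3); (iv) do the finer eigenvalue count to decide between (2) and (3) and to extract $\alpha(G) \ge h(q^d)$, then deduce the $\delta$ bound from \Cref{lemma_delta_alpha}.
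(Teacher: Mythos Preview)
Your argument contains a genuine error that invalidates the entire approach. You claim that $N_j \ge 1$ for every $j \ge 1$, arguing ``at minimum $\lambda = 1$ works'', and later that ``$x \mapsto x^{q^s}$ itself'' lies in $G$ and fixes $\F_{q^s}$. But this is false: the element $\phi$ you fixed is only some lift in $G$ of the $q^s$-Frobenius, so it acts as $x \mapsto \mu x^{q^s}$ for some $\mu \in \F_{q^d}^\times$ that need not lie in $G_0$ and need not equal $1$. The coset $G_0\phi^j$ consists of maps $x \mapsto \lambda \mu_j x^{q^{sj}}$ with $\lambda$ ranging over $G_0$, and whether any such map has a nonzero fixed point is the condition that the coset $\mu_j G_0$ meets the image of $x \mapsto x^{1-q^{sj}}$. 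That is a nontrivial arithmetic condition on $\mu_j$ and $G_0$, and it can certainly fail; there are nonscalar subgroups of $\GamL_1(q^d)$ that act semiregularly on $V\setminus\{0\}$ (take, e.g., $q^d=p^2$ and $G=\langle \mu\sigma\rangle$ with $\sigma$ the $p$-Frobenius and $\mu^{p+1}\neq 1$). Consequently your dichotomy ``semiregular $\Leftrightarrow$ $G$ is scalar'' is wrong, and so is the claim $A(G)/(A(G)\cap G_0)=G/G_0$.

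The paper's proof confronts exactly this obstacle. \Cref{l:agl_1_coset} gives the precise condition for a coset $z^\ell H$ to contain an element with eigenvalue $1$ (namely that a certain power of $\overline x$ be a $(p^\ell-1)$-th power), and \Cref{l_r_part_smaller} translates this into an elementary-number-theoretic criterion on the prime divisors of $(f,p-1)$. The bulk of the proof is then a delicate case analysis (using \Cref{l_coprime_no_growth,l:numbers,l_r_odd_grows_by_one,l_r_even_growth}) showing that if some coset meets $\Delta$ then either one with $\ell<f/2$ does (giving a strong bound immediately), or the only contributing coset is $z^{f/2}H$, in which case one squeezes out exactly the sharp bound $h(q)$. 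None of this can be shortcut by the $N_j\ge 1$ claim, and your step (iv), ``do the finer eigenvalue count'', is in fact where essentially all the work lies.
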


\begin{proposition}
\label{prop:GamL_2}
\Cref{t:affine} holds if $G$ is primitive and $d/r=2$, i.e., $G\le \GamL_2(q^{d/2})$.
\end{proposition}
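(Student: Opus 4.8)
The plan is to invoke Dickson's description of the $2$-dimensional primitive linear groups. Write $Q:=q^r$, so that $G\le\GammaL_2(Q)=\GL_2(Q)\rtimes\langle\phi\rangle$ with $\phi$ the $q$-power Frobenius of $\F_Q$, and put $N:=G\cap\GL_2(Q)\nor G$, so that $G/N$ is cyclic. Since $qd$ is large, so is $Q$, and $q^d=Q^2$.

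\emph{Step 1: reduction to $N$ primitive.} As $G$ is irreducible over $\F_q$, $V$ admits no proper nonzero $G$-invariant $\F_q$-subspace and no $G$-invariant decomposition into two $r$-dimensional $\F_q$-subspaces. An $N$-invariant $\F_Q$-line is an $r$-dimensional $\F_q$-subspace whose $G$-orbit (recall $G$ normalises $N$) has size at most $2$; hence $N$ is irreducible over $\F_Q$. If $N$ is scalar, $G$ fixes the $\F_Q$-line $\F_Qe_1$; if $N$ is abelian but not scalar, it lies in a nonsplit maximal torus, $G$ normalises $\C_{\GL_2(Q)}(N)=\F_{Q^2}^\times$, and so $G\le\GammaL_1(Q^2)=\GammaL_1(q^{2r})$; both contradict the hypotheses (irreducibility over $\F_q$, resp.\ the maximality of $r$). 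If $N$ is monomial over $\F_Q$ with a unique invariant pair of $\F_Q$-lines, $G$ preserves it, again contradicting irreducibility over $\F_q$; a non-unique system forces $\bar N$ (the image of $N$ in $\PGL_2(Q)$) to have a normal Klein four-group, i.e.\ $N$ to contain an irreducible extraspecial subgroup of order $8$. So I may assume $N$ is primitive, or of this last type.

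\emph{Step 2: Dickson and the large cases.} By Dickson's classification, either $\PSL_2(q_0)\nor\bar N\nor\PGL_2(q_0)$ for a subfield $\F_{q_0}\subseteq\F_Q$, or $\bar N\in\{A_4,S_4,A_5\}$; correspondingly $F^{*}(N)$ is, modulo its centre, $\SL_2(q_0)$ or $\PSL_2(q_0)$, or one of $Q_8$, $D_8$, $\SL_2(5)$. If $\bar N\supseteq\PSL_2(q_0)$ with $q_0$ larger than an absolute constant, then $G$ has a large quasisimple component acting on a Frobenius twist of the natural module, and I feed $(G,V)$ to the estimates of \Cref{sec:quasisimple}, which give $\alpha(G)$ far above $h(q^d)$; if $\bar N\in\{A_4,S_4\}$, then $N$ contains an irreducible extraspecial group of order $8$, handled by \Cref{sec:extraspecial}. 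In either case the bound for $\delta(V\rtimes G)$ follows from \Cref{lemma_delta_alpha}, and $|A(G)|/|G|\ge 1/(q^{d/2}-1)$ from \Cref{l:subgroup_eigenvalue1} (using $A(G)\supseteq F^{*}(N)'$ and crude order estimates).

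\emph{Step 3: the bounded case, and the main obstacle.} There remains $\bar N$ of bounded order, so $G$ involves one of $\SL_2(5)$, $\SL_2(3)$, $Q_8$, $D_8$, or $\SL_2(q_0)$ with $q_0$ bounded, together with scalars and possibly a field-automorphism part. If $G$ acts semiregularly on $V\sm\{0\}$, then $V\rtimes G$ is Frobenius and $G$ is a $2$-dimensional Frobenius complement; the metacyclic ones lie in $\GammaL_1(Q^2)$ and are excluded by maximality of $r$, so by Zassenhaus's classification $G$ has a normal subgroup $L\in\{\SL_2(3),\SL_2(5)\}$. Since $L$ is absolutely irreducible, $\C_G(L)$ acts faithfully and semiregularly on the $\F_Q$-line $\F_Qe_1$ it preserves, hence is a semiregular subgroup of $\GammaL_1(Q)$ and has order dividing $Q-1$; as no outer extension of $L$ arising here is semiregular, $|G|=|L\circ\C_G(L)|\le 60(Q-1)$, giving conclusion (2) of \Cref{t:affine} — sharp for $G=\SL_2(5)\circ\F_Q^\times$ when $Q\equiv-1\pmod{60}$. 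If $G$ is not semiregular, I instead exhibit enough elements with a nonzero fixed vector: the transvections of an $\SL_2(q_0)$ in defining characteristic or of a characteristic-$2$ Klein four-group; products $z\ell$ with $z^{-1}$ an eigenvalue of some $\ell\in N$ (whose existence forces enough roots of unity into $\F_Q$); and, when $G\not\le\GL_2(Q)$, in each non-linear coset $Ng_1^{i}$ the semilinear elements $h\phi^{j}$ with $(h\phi^{j})^{\mathrm{ord}(\phi^{j})}=1$, which by Hilbert 90 are $\GL_2(Q)$-conjugate to $\phi^{j}$ and hence fix a $2r/\mathrm{ord}(\phi^{j})$-dimensional $\F_q$-subspace. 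Tracking both the number of such elements and their values of $c(g)$ — which here are multiples of $r/\mathrm{ord}(\phi^{j})$, in particular at least a small power of $q$ — then delivers $\alpha(G)\ge h(q^d)$, $\delta(V\rtimes G)\ge g(q^d)$, and the index bound on $A(G)$. The main obstacle is precisely this last count in the non-semiregular case — organising, coset by coset in $G/N$, the number of semilinear elements that fix a positive-dimensional subspace, which after twisting by scalars becomes a divisibility statement about the order of $g_1N$ — together with extracting the sharp constant $60$ from the finitely many bounded $\bar N$, as witnessed by the $\SL_2(5)$ example; by contrast, once Steps 1--2 are in place the remaining "large" cases are comparatively soft.
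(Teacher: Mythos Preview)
Your overall architecture (reduce to $N=G\cap\GL_2(Q)$ primitive, invoke Dickson, then split on $F^*(N)$) matches the paper. But Step~2 contains two genuine errors. First, pointing to \Cref{sec:extraspecial} for the $Q_8$ case does not work: \Cref{l:extraspecial_asymp} is an asymptotic statement as the extraspecial group tends to infinity, and $|Q_8|=8$ is fixed; \Cref{rem:extraspecial_problem} in the paper says exactly this. Second, for $\PSL_2(q_0)\nor\bar N$ with $q_0$ large you appeal to \Cref{sec:quasisimple}, but those results are for $d\ge 3$, and in any case the naive bound $\alpha(G)\ge\alpha(\SL_2(q_0))/|G:\SL_2(q_0)|$ loses a factor $\asymp\log_p Q$ from the field-automorphism cosets, which is already too much to reach $h(Q^2)$. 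The paper instead handles every $q_0\ge 4$ uniformly in \Cref{l:sl2(q_0)} by applying Shintani descent to each $N$-coset, obtaining $\alpha(H,W)\ge 1/q_0$ coset by coset rather than only on $\SL_2(q_0)$ itself.

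Step~3 has the right shape for the semiregular case (and recovers the sharp $60(Q-1)$ via Zassenhaus, which the paper does by direct inspection in \Cref{l:sl2(5)}), but your non-semiregular sketch is not a proof. Both the $\SL_2(5)$ and $Q_8$ cases require substantial casework to get the constants $31/60$ and $13/24$; in the paper this is done in \Cref{l:sl2(5),l:q8}, and the key manoeuvre --- which you are missing --- is to pass to a subgroup lying in $Z\langle\tau\rangle$ (or a diagonal extension thereof) and then invoke the already-proved $\GammaL_1$ result (\Cref{prop:GamL_1}) to bound $\alpha$ on that subgroup. Your Hilbert~90 idea for semilinear cosets is in the right spirit but does not by itself control the interaction with the scalar part; the paper's repeated reduction to \Cref{prop:GamL_1} is what makes this tractable. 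Finally, your appeal to \Cref{l:subgroup_eigenvalue1} for the $A(G)$ bound is a misreference: that lemma only identifies $|G:A(G)|$ with $|V\rtimes G:D(V\rtimes G)|$ and gives no bound.
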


\begin{proposition}
\label{prop:combination}
   There exists an absolute constant $C>0$ such that if $d\ge 3$,  $G\le \GL_d(q)$ is an absolutely irreducible quasisimple or extraspecial group of order at least $C$, and $Z\le \F_q^\times$, then  $\alpha(ZG)> 2|O|\log(q)q^{-d/2}$, where  $O=\Out(G)$ if $G$ is quasisimple and $O=\Sp_{2r}(s)$ if $G$ is extraspecial of order $s^{2r+1}$.
   \end{proposition}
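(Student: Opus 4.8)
The plan is to treat the quasisimple and extraspecial cases separately. In each case I reduce the bound on $\alpha(ZG)$ to one on $\alpha(G)$ via $\alpha(ZG)\ge\alpha(G)/|Z|\ge\alpha(G)/(q-1)$ — or, when $d$ is small, by working with $ZG$ directly — and I use that $|G|\ge C$ forces $q$ or $d$ to be large. \emph{Extraspecial groups.} Here $G=s^{1+2r}$, $d=s^r$, $|O|=|\Sp_{2r}(s)|$, and since $|G|=s\,d^2$ with $s\le d$, the hypothesis $|G|\ge C$ gives $d\ge C^{1/3}$. Absolute irreducibility forces $Z(G)\cong C_s$ to act by faithful scalars, so $q$ is coprime to $s$ (hence $q\ge 3$) and $V$ lifts to characteristic $0$. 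A noncentral $g\in G$ has $[g,G]=Z(G)$, so $g$ is conjugate to $gz$ for every $z\in Z(G)$ and therefore $\op{tr}(\rho(g))=0$; if moreover $g$ has order $s$ (order $2$ when $s=2$), then $\langle g,Z(G)\rangle\cong C_s\times C_s$ acts on $V$ through the $s$ linear characters extending the central one, and a Vandermonde argument using $\op{tr}(\rho(g^k))=0$ for $1\le k\le s-1$ shows that each of these characters — in particular the trivial extension, hence the eigenvalue $1$ — occurs with multiplicity exactly $s^{r-1}$. Counting the elements of order $s$ (resp.\ $2$) by means of the homomorphism $g\mapsto g^s$ (resp.\ the quadratic map $g\mapsto g^2$, whose polarisation is the commutator form) gives $\alpha(G)\ge 1/(5s)$ in all cases, whence $\alpha(ZG)\ge 1/(5s(q-1))$. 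Since $|\Sp_{2r}(s)|<s^{2r^2+r}=d^{2r+1}\le d^{2\log_2 d+1}$ and $q\ge 3$, an elementary estimate gives $1/(5s(q-1))>2\,d^{2\log_2 d+1}\log q\cdot q^{-d/2}$ once $d\ge C^{1/3}$ is large enough.

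\emph{Quasisimple groups.} In non-defining characteristic, the Landazuri--Seitz--Tiep--Zalesskii lower bounds show $|G|\ge C$ forces $d$ large; in particular $|G|\le q^d/2$, so counting $G$-orbits on $V$ via Burnside, $\sum_{g\in G}q^{\dim\C_V(g)}=|G|\cdot\#(\text{orbits})\ge q^d$, which yields $\alpha(G)\ge\big(q^d/|G|-1\big)/(q-1)\ge 1/(q-1)$ and hence $\alpha(ZG)\ge 1/(q-1)^2$; as $|\Out(G)|$ is bounded by a fixed power of $d$ and $d$ is large, this beats $2|O|\log q\cdot q^{-d/2}$. In defining characteristic, let $\mathbf G$ be the ambient simple algebraic group, of rank $\ell$ and dimension $D$, acting on $\bar V=V\otimes\overline{\F}_q$ via $\rho$; then $|G|$ is comparable to $q^D$, $d$ is bounded below by a function of $\ell$ tending to infinity, and $|\Out(G)|\ll\ell\log q\ll d\log q$. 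For $\lambda\in\overline{\F}_q^\times$ the locus $Y_\lambda=\{g\in\mathbf G:\det(\rho(g)-\lambda)=0\}$ is the zero set of a single regular function, hence equals $\mathbf G$ or has codimension exactly $1$. If $Y_1=\mathbf G$, every element of $G$ has eigenvalue $1$ on $V$ and $\alpha(G)=1$. Otherwise, for each $\lambda\in\F_q^\times$ outside a set $\mathcal E$ of bounded size, $Y_\lambda$ has an $\F_q$-rational component of top dimension — this by realising $Y_\lambda$ as the image of the irreducible, $\F_q$-rational variety $\{(g,v)\in\mathbf G\times(\bar V\sm 0):\rho(g)v=\lambda v\}$ — so Lang--Weil gives $\#\{g\in G:\lambda\in\op{spec}(g)\}\gg q^{D-1}\gg|G|/q$. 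Since $zg\in ZG$ has eigenvalue $1$ iff $z^{-1}\in\op{spec}(g)$, summing over $z\in Z$ yields $\alpha(ZG)\gg 1/q$ (when $|Z|$ is bounded one instead uses $1\in Z$ and $\alpha(G)\gg 1/q$ directly). Finally $\tfrac1q>2|O|\log q\cdot q^{-d/2}$ holds once $q$ is large (using $d\ge 3$), while if $q$ is bounded then $|G|\ge C$ forces $\ell$, hence $d$, to be large and the bound is immediate; the remaining small pairs $(G,q)$ form a finite list — a violation of the clean estimate would force both $\ell$ and $q$ bounded — and are handled by inspection, $C$ being enlarged to absorb them.

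\emph{Main obstacle.} The crux is the defining-characteristic quasisimple case: making the Lang--Weil estimate for $Y_\lambda$ uniform (rationality of a top-dimensional component, and uniform control of $\mathcal E$ and of the error term, including for Suzuki and Ree groups), and, in low dimension, genuinely bounding $\alpha(ZG)$ rather than the far lossier $\alpha(G)/(q-1)$, together with checking the finite list of small exceptional $(G,q)$. By contrast the extraspecial and non-defining-characteristic parts are routine, the point there being that $|G|\ge C$ already forces $d$ to be large.
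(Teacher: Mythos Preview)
The extraspecial case and the non-defining quasisimple case are essentially fine, though your Burnside deduction $\alpha(G)\ge(q^d/|G|-1)/(q-1)$ is invalid: the identity alone contributes $q^d$ to $\sum_g\pi(g)$, so the inequality $\sum_g\pi(g)\ge q^d$ carries no information about the other elements (a small cyclic Singer subgroup already violates the stated bound). This is harmless, since the trivial $\alpha(G)\ge 1/|G|$ together with the Landazuri--Seitz bounds you cite gives $|G|\cdot|\Out(S)|=q^{o(d)}$, which suffices after dividing by $|Z|\le q$; that is exactly the paper's argument in these two cases (and likewise for extraspecial groups, where your eigenvalue computation is correct but unnecessary).

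The genuine gap is in defining characteristic, and it is the one you flag yourself. The Lang--Weil implied constant for $Y_\lambda=\{\det(\rho(g)-\lambda)=0\}$ depends on its degree and number of components, which grow with $d$; making this uniform over all representations is the whole problem, not a detail to be absorbed into ``a finite list of small exceptional $(G,q)$''. Your proposed source of an $\F_q$-rational top component, the incidence variety $\{(g,v):\rho(g)v=\lambda v\}$, is not irreducible in general: the fibres over $v$ are cosets of line stabilisers, but $\mathbf{G}$ typically has many orbits on $\mathbf{P}(\bar V)$, so the image argument does not produce what you need. You also conflate $q$ with the Frobenius level $s_0$ (writing $|G|\sim q^{D}$) and do not treat the case $s>q$, where $V\otimes\overline{\F_q}$ is a twisted tensor product rather than an irreducible $\mathbf{G}$-module. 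The paper avoids all of this by a different, elementary route: unipotent elements always have eigenvalue $1$, and Steinberg's theorem gives exactly $|G|/s_0^{r}$ regular unipotents; combining this with lower bounds for $d$ in terms of the rank $r$ (Liebeck, L\"ubeck, Kleidman--Liebeck) yields $\alpha(G)\gg q\,|\Out(S)|\log(q)\,q^{-d/3}$ outside a short explicit list of small-$d$ modules, and the spare factor $q$ absorbs the passage to $ZG$. The listed exceptions (natural modules in rank $\le 5$, $G_2$ in dimension $6$, ${}^2B_2$ in dimension $4$, a few symmetric and alternating squares) are handled directly for arbitrary $G\le T\le \N_{\GL_d(q)}(G)$ by counting, in each $Z$-coset, elements with a prescribed eigenvalue. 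A Lang--Weil argument does appear in the paper (Lemma~\ref{l:codimension_1}), but only for a subvariety built from a single $\sigma$-stable weight, with constants allowed to depend on $X$ and that weight, and it is invoked for only finitely many modules.
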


The factor $2$ in the inequality in \Cref{prop:combination} will be useful in the proof of \Cref{t:affine} for technical reasons. 

In \Cref{prop:GamL_1}, we do not assume that $G$ is primitive because it will be convenient to quote the result in that form. In every case, let us show that the imprimitive case follows from the primitive one.

\begin{lemma}
	\label{l:general_imprimit}
If \Cref{t:affine} holds when $G$ is a primitive linear group, then it holds when $G$ is an imprimitive linear group.
\end{lemma}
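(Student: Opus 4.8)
The plan is to pass to a block stabiliser and invoke the primitive case of \Cref{t:affine}.

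Assume $G\le\GL_d(q)$ is irreducible and imprimitive, and fix a \emph{maximal} system of imprimitivity $V=V_1\oplus\cdots\oplus V_k$ with $k\ge 2$ blocks of common dimension $m=d/k$. Let $\rho\colon G\to S_k$ be the induced action on the blocks (so $\rho(G)$ is transitive), put $W=\op{Stab}_G(V_1)$, so that $[G:W]=k$, and let $\bar G_1\le\GL_m(q)$ be the image of $W$ on $V_1$. Maximality of the block system forces $\bar G_1$ to be irreducible and primitive on $V_1$, since a proper nonzero $\bar G_1$-submodule of $V_1$, or a nontrivial block system of $\bar G_1$ on $V_1$, would propagate under $G$ to a proper nonzero submodule of $V$, or to a proper refinement of $\{V_i\}$. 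The basic estimate is
\[
\alpha(G)\ \ge\ \frac{\alpha(\bar G_1,V_1)}{k}.
\]
Indeed, for $g\in G$ with $\rho(g)=\sigma$ and a cycle $(i_1\cdots i_\ell)$ of $\sigma$, the eigenvalues of $g$ on $V_{i_1}\oplus\cdots\oplus V_{i_\ell}$ are exactly the $\ell$-th roots in $\overline{\F_q}$ of those of $g^\ell|_{V_{i_1}}$; in particular, for $g\in W$ the element $g$ has eigenvalue $1$ on $V$ whenever $g|_{V_1}$ does, and since $W\to\bar G_1$ is onto with fibres of constant size, the proportion of such $g\in W$ is $\alpha(\bar G_1,V_1)$, which gives the displayed bound. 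One also checks that if $G$ is semiregular on $V\sm\{0\}$ then $W\to\bar G_1$ is injective (a nontrivial element acting trivially on $V_1$ would have eigenvalue $1$ on $V$) and $\bar G_1$ is then semiregular on $V_1\sm\{0\}$.

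Now I would argue by cases. If $G$ is semiregular on $V\sm\{0\}$, then $|G|=k\,|\bar G_1|\le k(q^{d/k}-1)$, an elementary inequality gives $k(q^{d/k}-1)\le 60(q^{d/2}-1)$ for all $k\ge 2$ and $q\ge 2$, and hence $\alpha(G)=1/|G|\ge 1/(60(q^{d/2}-1))$ and $\delta(V\rtimes G)=(1-1/q)/|G|\ge f(q^d)$ (by \Cref{lemma_delta_alpha} and a similar short computation), so $G$ falls in case (2) — in particular $G$ is never forced into case (1). If $G$ is not semiregular and $\bar G_1$ is not semiregular, then $\bar G_1$ falls in case (3) of \Cref{t:affine}, applied when $qm$ is large; when $qm$ is bounded, $k$ is large (as $q^d=q^{mk}$ is), and the trivial bound $\alpha(\bar G_1,V_1)\ge 1/|\GL_m(q)|$ already suffices. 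Since $q^{d/2}=q^{mk/2}$ and the exponent $mk/2$ exceeds $m/2$ by at least $m/2$, the estimate above pushes $\alpha(G)$ comfortably above $h(q^d)$ and above the threshold required in \Cref{lemma_delta_alpha} for $\delta(V\rtimes G)\ge g(q^d)$, while $|A(G)|/|G|\ge\alpha(G)\ge 1/(q^{d/2}-1)$; thus $G$ falls in case (3). Note that in all of this we never identify the invariant $r$ of $G$, as we always land in case (2) or (3).

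I expect the main obstacle to be the remaining case, $G$ not semiregular but $\bar G_1$ semiregular, where the bounds of \Cref{t:affine} are essentially sharp so there is no slack. For $k\ge 3$ the hypothesis that $q^d$ is large still leaves room for the crude bound $\alpha(G)\ge 1/(k(q^m-1))$, but for $k=2$ — where $W$ stabilises both blocks — one must count the elements of eigenvalue $1$ exactly. Setting $K_i=\ker(W\to\GL(V_i))$ (two commuting subgroups of equal order meeting trivially), the elements of $W$ with eigenvalue $1$ on $V$ are precisely those of $K_1\cup K_2$, while $g\in G\sm W$ has eigenvalue $1$ iff $g^2|_{V_1}$ does; moreover, since $\bar G_1$ semiregular in characteristic $2$ has odd order (so $|W|$ is odd), every involution of $G$ lies in $G\sm W$, and a block-swapping involution automatically has eigenvalue $1$. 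Feeding these contributions, together with $|\bar G_1|\le q^m-1$ and $|G|=2\,|\bar G_1|\,|K_1|$, into the definitions of $h$, $f$ and $g$ yields the bounds of case (3) (for the $|A(G)|$ bound one also uses that $K_1K_2\le A(G)$). The decisive inequality in the $k=2$ analysis collapses to essentially $q^{d}\ge q^{d}-1$, which is exactly why the constants in \Cref{t:affine} cannot be improved.
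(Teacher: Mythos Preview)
Your approach is the paper's: pass to a maximal block system, use $\alpha(G)\ge\alpha(\bar G_1,V_1)/k$, dispose of $k\ge 3$ and of the non-semiregular $\bar G_1$ case by crude estimates, and in the critical case $k=2$ with $\bar G_1$ semiregular count $K_1\cup K_2$ together with a block-swapping involution.

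There is, however, a genuine gap in that last case when $|K_1|=1$ and the characteristic is odd. From $K_1\cup K_2=\{1\}$ alone you get only $\alpha(G)\ge 1/|G|\ge 1/(2(q^{d/2}-1))$, and this is \emph{strictly smaller} than $h(q^d)=(q^{d/2}+2)/(2(q^d-1))$; you are short by exactly one element. You supply that missing element (a block-swapping involution) only via the parity argument ``$\bar G_1$ semiregular in characteristic $2$ has odd order, so $|W|$ is odd''. In odd characteristic this argument is simply unavailable --- $|W|$ can be even and there is no a priori reason for $G$ to contain an involution outside $W$. The fix is immediate and uses only your own ingredients: since $G$ is non-semiregular there is some nontrivial $g\in G$ with eigenvalue~$1$; as $K_1=K_2=1$, no such $g$ lies in $W$, so $g\in G\sm W$; your criterion ``$g\in G\sm W$ has eigenvalue $1$ iff $g^2|_{V_1}$ does'' combined with the semiregularity of $\bar G_1$ forces $g^2\in K_1=1$, i.e.\ $g$ is the required involution. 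This characteristic-free argument is exactly what the paper does.

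A second, smaller point: once the involution is in hand your appeal to \Cref{lemma_delta_alpha} for the $\delta$-bound does succeed, but only barely (for $q=2$ the inequality reduces to $q^d\ge q^d-1$), and you do not verify it. The paper instead observes that every nontrivial eigenvalue-$1$ element under discussion fixes exactly $q^{d/2}$ vectors, which gives the $\delta$-bound with room to spare.
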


\begin{proof}
Let $G\le \GL_m(q)\wr S_t$  preserve the decomposition $V=V_1\oplus \cdots \oplus V_t$, with $t\ge 2$, permuting transitively the factors, and assume the stabilizer $H$ of $V_1$ acts primitively on $V_1$. Clearly $|A(G)|/|G|\ge \alpha(G)$, and in all but one cases the bound for $\alpha(G)$ will be enough also for $|A(G)|/|G|$; we will not repeat this every time. 

 Note $\alpha(G)\ge \alpha(H,V_1)/t$.
	If $H$ does not induce a semiregular group on $V_1\sm\{0\}$, then $\alpha(H,V_1)\ge h(q^m) > 1/(2q^{m/2})$ and so $\alpha(G)> 1/(2tq^{m/2})\gg 1/q^{d/3}$ and we are done (recall \Cref{lemma_delta_alpha}). Assume then that $H$ induces a semiregular group on $V_1\sm\{0\}$; then 
	$s:=|H^{V_1}|\le q^m-1$ and $\alpha(H,V_1)=1/s$, and so $\alpha(G)\ge 1/(t(q^m-1))$. If $m\le d/3$ this is $\gg q^{d/3}$, so assume $m=d/2$. 
	
	For $i=1,2$, let $N_i$ be the subgroup of $H$ acting trivially on $V_i$. We have  $|G|=2|N_1|s$. The elements of $H$ with eigenvalue $1$ are those of $N_1\cup N_2$,
	so
	\begin{equation}
		\label{eq:impr_precise}
		\alpha(G)\ge \frac{2|N_1|-1}{2|N_1|s} \ge \frac{1}{2s} \ge \frac{1}{2(q^{d/2}-1)}.
	\end{equation}
	Moreover, $\delta(V\rtimes G)\ge 1/(4(q^{d/2}-1))>f(q^d)$ by \Cref{lemma_delta_alpha}, so the case where $G$ is semiregular on $V\sm\{0\}$ is done. Assume then $G$ is not semiregular on $V\sm\{0\}$. If $|N_1|\ge 2$, then we can improve the second inequality in \eqref{eq:impr_precise} and get $\alpha(G)\ge 3/(4(q^{d/2}-1)) > h(q^d)$. What is more, the nontrivial elements of $N_1\cup N_2$ fix $q^{d/2}$ vectors, so an easy modification of the proof of \Cref{lemma_delta_alpha} gives $\delta(V\rtimes G)\ge 3/(4(q^{d/2}-1))\cdot(1-1/q^{d/2})> g(q^d)$.
 Note also that $N_1\times N_2 \le A(G)$, so  $|A(G)|/|G| \ge |N_1||N_2|/(2|N_1|s) \ge 1/(q^{d/2}-1)$,  and so the case where $|N_1|\ge 2$ is  done. Assume finally $N_1=1$, so $|G|=2s$. If $G$ contains at least three elements with eigenvalue $1$, then
	\[
	 \alpha(G)\ge \frac{3}{|G|} = \frac{3}{2s} \ge \frac{3}{2(q^{d/2}-1)},
	\]
and so by \Cref{lemma_delta_alpha} $\delta(V\rtimes G)\ge 3/(4(q^{d/2}-1))$ and we are done. Assume then $G$ contains a unique nontrivial element $x$ with eigenvalue $1$; then $|x|=2$, and $\alpha(G) = 2/|G| \ge 1/(q^{d/2}-1)$. We have $x\in G\sm H$,
so write $x=(x_1,x_2)\tau$ where $1\neq \tau \in S_2$ and $x_i\in \GL_{d/2}(q)$. Since $|x|=2$, we have $x_1x_2=1$, from which $x$ fixes $q^{d/2}$ vectors and so as above $\delta(V\rtimes G)\ge 1/(q^{d/2}-1)\cdot(1-1/q^{d/2})>g(q^d)$ and we are done.
	\end{proof}

We now prove \Cref{t:affine} assuming \Cref{prop:GamL_1,prop:GamL_2,prop:combination}.

\begin{proof}[Proof of \Cref{t:affine} assuming \Cref{prop:GamL_1,prop:GamL_2,prop:combination}]
Let $G\le \GL_d(q)$ be irreducible. In view of \Cref{l:general_imprimit}, we may assume  $G$ is primitive. From now, for convenience we change notation and replace $d/r$ by $d$ and $q^r$ by $q$. In particular, $G\le  \GL_{dr}(q^{1/r})$ is primitive and $d$ is minimal so that $G\le \GamL_d(q)$.

Let $H:=G\cap \GL_d(q)$. By Clifford's theorem, each characteristic subgroup $L$ of $H$ acts homogeneously. Let $V_1$ be an $\F_q L$-component, whose dimension we denote by $a$. By Schur's lemma, $\End_{\F_q L}(V_1)$ is a field extension of $\F_q$, say of degree $b$. By \cite[3.11]{Asc}, $\C_{\GL_d(q)}(L)\cong \GL_{d/b}(q^b)$, and since $G$ normalizes $L$, we get $G\le \GamL_{d/b}(q^b)$. By the minimality of $d$, we deduce $b=1$, which is to say, $V_1$ is absolutely irreducible.

Choose now $L=\mathrm F^*(H)$. We have $L=\Z(H) R$, where $R= G_1 \circ \cdots \circ G_t$ and the following holds. 
There exists $0\le r \le t$ such that for $1\le i\le r$, $G_i$ is an extraspecial $p_i$-group of order $p_i^{2r_i+1}$, of exponent $p_i$ if $p_i$ is odd; and possibly replacing $G_i$ by $ZG_i$ where $Z\le \F_q^\times$ has order $4$,
we have $H/\C_H(G_i)\le  p_i^{2r_i}.\Sp_{2r_i}(p_i)$. 
For $r+1\le i\le t$, $G_i$ is a central product of $\ell_i$ copies of a quasisimple group $S_i$, and $H/\C_H(G_i) \le \Aut(G_i)=\text{Aut}(S_i)\wr S_{\ell_i}$.

Recall now that $\Z(L)=\C_H(L)$. We have
\[
H/\C_H(L)=H/\bigcap_i \C_H(G_i) \le \prod_{i=1}^t H/\C_H(G_i)
\]
and therefore
\begin{equation}
	\label{eq:index}
\frac{|H|}{|L|} \le \prod_{i=1}^r |O_i|
\end{equation}
where for $i\le r$, $O_i := \Sp_{2r_i}(p_i)$, and for $i\ge r+1$, $O_i:=
\text{Out}(S_i)\wr S_{\ell_i}$.

\textbf{Assume first} that $L$ acts (absolutely) irreducibly on $V$. If $d\le 2$, we have $G\le \GamL_1(q)$ or $\GamL_2(q)$ and we conclude by \Cref{prop:GamL_1,prop:GamL_2}. Assume then $d\ge 3$. We will prove that 
\begin{equation}
	\label{eq:desired_inequality}
\alpha(L,V)> 2\log(q)|H:L|q^{-d/2},
\end{equation}
which implies $|A(G,V)|/|G|\ge \alpha(G, V) > 2/q^{d/2}$  since $|G:L|\le \log(q)|H:L|$, and so $\delta(V\rtimes G,V)> 1/q^{d/2}$ by \Cref{lemma_delta_alpha}. This will conclude the proof.

Since $L$ is absolutely irreducible and $L=\Z(H)R$,  $R$ is also absolutely irreducible. Then, by \cite[(3.16)]{Asc} we deduce that $W=W_1\otimes \cdots \otimes W_t$, where $W_i$ is an absolutely irreducible $\F_qG_i$-module of dimension $d_i$. In particular, for $1 \le i \le r$, $W_i$ is faithful and $d_i=p_i^{r_i}$. For $r+1\le i\le t$, $W_i = M_{i_1}\otimes \cdots \otimes M_{i\ell_i}$ where $M_{i_j}\cong M_i$ is a faithful absolutely irreducible $\F_qS_i$-module of dimension $m_i$ (again by \cite[(3.16)]{Asc}). (In other words, $W_i$ is a tensor product of $\ell_i$ copies of $M_i$, but it will sometimes be useful to make use of the indices $i_j$.) We may take it that if $S_i\cong S_j$ with $i\neq j$ then $M_i\not\cong M_j$.

Let now $C>0$ satisfy the following conditions:
\begin{itemize}
    \item[$\diamond$] $C$ satisfies the conclusion of \Cref{prop:combination}.
    \item[$\diamond$] If $q$ is a power of $s$, $s(s^2-1)\ge C$, and either $\ell\ge 3$ or $\ell \ge 1$ and $s<q$, then 
\begin{equation}
\label{eq:SL2}
    2\log(q)(2s\log(s))^\ell  \ell! < q^{2^{\ell-1}}.
\end{equation}.
\item[$\diamond$] If $q(q^2-1)\ge C$ and $a\ge 2$ then
\begin{equation}
\label{eq:SL2newcondition}
2\log(q)(24q\log^2(q))^{\floor{a/2}}(1+\delta\cdot q\log(q)) < q^{2^{a-1}},
\end{equation}
where $\delta=0$ if $a=2$, and $\delta=1$ if $a\ge 3$.
\end{itemize}
Let us also record the following inequality, which holds for $d\ge 3$, $\ell\ge 1$, and $q$ a prime power:
\begin{equation}
\label{eq:new_requirement}
\ell! q^{d\ell/2} \le q^{d^\ell/2}.
\end{equation}

Now that we fixed $C$, let $g(C)$ be so that for every $\ell\ge g(C)$ and every $m\ge 2$, 
\begin{equation}
\label{eq:g(C)}
 2C^{C\ell} \ell!q^{m\ell} \log(q) < q^{m^\ell/2}
\end{equation}
(It is easily seen that $g(C)$ exists.) For later use, note that if $G$ is a finite group of order at most $C$, then (crudely) $|\Out(G)|\le C^C$.

Let now $f(C)$ be equal to $1$, plus the product of all the following quantities: For every extraspecial group of order $s^{2f+1}\le C$, $s^{2f+1}|\Sp_{2f}(s)|$; For every quasisimple group $L$ of order at most $C$ and for every $\ell \le g(C)$, $(|\Aut(L)|^\ell \ell!)^{|L|}$. (For a later use, note that $|L|$ is an upper bound for the number of equivalence classes of absolutely irreducible $\F_q L$-representations.)

Now let
\begin{itemize}
    \item[$\diamond$] $\Omega_1$ be the set of indices $1\le i \le t$ such that either $i\le r$ and $|G_i|\le C$, or $i\ge r+1$ and $|S_i|\le C$ and $\ell_i \le g(C)$. 

\item[$\diamond$] $\Omega_2$ be the set of indices $r+1\le i \le t$ such that $|S_i|\le C$ and $\ell_i > g(C)$. 

\item[$\diamond$] $\Omega_3$ be the set of indices $r+1\le i \le t$ not belonging to $\cup_{j\le 2}\Omega_j$ and such that $m_i=\dim(M_i)=2$, $\ell_i\le 2$ and $S_i\cong \SL_2(q)$.
\item[$\diamond$]  $\Omega_4$ be the set of indices $r+1\le i \le t$ not belonging to $\cup_{j\le 2}\Omega_j$ and such that $m_i=\dim(M_i)=2$, and $\ell_i \ge 3$ or $S_i\cong \SL_2(q_i)$ with $q_i<q$.

\item[$\diamond$] $\Omega_5$ be the set of remaining indices. 
\end{itemize}

For $j=1, \ldots, 5$, denote $d(\alpha_j):=\prod_{i\in \Omega_i}d_i$.  Since $n$ is sufficiently large, we may assume that $\cup_{i>1}\Omega_i \neq \varnothing$. Now choose any $j\in \cup_{i>1}\Omega_i$. If $j\le r$ replace $G_j$ by $\Z(H)G_j$; if $j\ge r+1$ replace $S_{j_1}$ by $\Z(H)S_{j_1}$. From now on we will not specify this in the notation. In particular, $L=G_1\circ \cdots \circ G_t$.

We make use of the following easy observation: If $g_i\in G_i$ has eigenvalue $1$ on $W_i$, then the image of $(g_1, \ldots,g_t)$ in $L$ has eigenvalue $1$ on $V$. We deduce that
\begin{equation}
\label{eq:total}
\alpha(L, V)\ge \prod_{1\le i \le 5} \alpha_i
\end{equation}
where $\alpha_i = \prod_{j\in \Omega_i} \alpha(G_j, W_j)$.

Observe that by definition of $f(C)$,
\begin{equation}
\label{eq:alpha1}
\frac{\alpha_1}{\prod_{i\in \Omega_1}|O_i|} \ge \frac{1}{\prod_{i\in \Omega_1}|G_i||O_i|} > \frac{1}{f(C)}.
\end{equation}
Moreover, by \cite{cameron_cohen} and Lemma \ref{lemma_delta_alpha}, for $i\in \Omega_2$, $\alpha(S_i,M_i)\ge \delta(M_i\rtimes S_i,M_i)\ge 1/q^{m_i}$, therefore by \Cref{eq:g(C)}, noting that $d_i=m_i^{t_i}$, we get that if $\Omega_2\neq \varnothing$ then
\begin{equation}
\label{eq:alpha2}
\frac{\alpha_2}{\prod_{i\in \Omega_2}|O_i|} \ge \prod_{i\in \Omega_2} \frac{1}{C^{C\ell_i}\ell_i! q^{m_i\ell_i}} > 2\log(q) \prod_{i\in \Omega_2} \frac{1}{q^{d_i/2}} \ge \frac{2\log(q)}{q^{d(\alpha_2)/2}}.
\end{equation}
(Note that the assumption that $\Omega_2\neq \varnothing$ is only needed to allow the factor $2\log(q)$. The same will hold for $\Omega_4$ and $\Omega_5$, below.)

By \Cref{prop:combination}, by the definition $C$ and by \eqref{eq:new_requirement}, we have that if $\Omega_5\neq \varnothing$ then
 \begin{equation}
\label{eq:alpha5}
\frac{\alpha_5}{\prod_{i\in \Omega_5}|O_i|}  > 2\log(q) \prod_{\substack{1\le i\le r \\ i\in \Omega_5}} \frac{1}{q^{d_i/2}}  \prod_{\substack{r+1\le i\le t \\ i\in \Omega_5}} \frac{1}{\ell_i! q^{m_i\ell_i/2}} \ge \frac{2\log(q)}{q^{d(\alpha_5)/2}}.
\end{equation}

Note now that for $i\in \Omega_3\cup\Omega_4$, $S_i=\SL_2(q_i)$ or $\Z(H)\SL_2(q_i)$, where $q$ is a power of $q_i$ and $M_i$ is either the natural module or the dual. (And there is at most one $i_j$ for which $S_{i_j}\neq \SL_2(q_i)$.) We have 
$\alpha(S_i,M_i)\ge 1/q_i$ (see \Cref{l:simple_estimate}).
Using $|O_i|\le 2\log(q_i)$ and
 \eqref{eq:SL2} (applied with $s=q_i$) we deduce that if $\Omega_4\neq \varnothing$ then
\begin{equation}
\label{eq:alpha4}
\frac{\alpha_4}{\prod_{i\in \Omega_4}|O_i|} \ge   \prod_{i\in \Omega_4} \frac{1}{(2\log(q_i)q_i)^{\ell_i}\ell_i!}  > \frac{2\log(q)}{q^{d(\alpha_4)/2}}
\end{equation}

Now, let us address $\Omega_3$. If $a:=\log(d(\alpha_3)) = \sum_{i\in \Omega_3}\ell_i$ is even, we choose an arbitrary matching of the factors $S_{i_j}$,
 $i\in \Omega_3$, $1\le j\le \ell_i$.  If $a$ is odd, we do the same by leaving out an arbitrary factor. If $S_\nu$ and $S_\eta$ are matched, we readily see that $\alpha(S_\nu\circ S_\eta,M_\nu\otimes M_\eta)\ge 1/(3q)$.
(Indeed, choose an element of $S_\nu$ having eigenvalues in $\F_q^\times$ -- for $q\ge 7$ the proportion of choices is at least $(q-3)/(2(q-1))\ge 1/3$ --
and choose an element of $S_\eta$ having eigenvalue $\lambda^{-1}$, where $\lambda$ is an eigenvalue of the first element -- the proportion of choices is at least $1/q$, see \Cref{l:simple_estimate}.) Using that $|O_i|\le 2\log(q)$, we get
\begin{equation}
\label{eq:alpha3}
\frac{\alpha_3}{\prod_{i\in \Omega_3}|O_i|} \ge \frac{1}{q\log(q)} \frac{1}{(24q\log^2(q))^{\floor{a/2}}} 
\end{equation}
where the first factor $1/(q\log(q))$ appears only if $a$ is odd.

Is is now easy to deduce the desired conclusion \eqref{eq:desired_inequality}, using the assumption $d\ge 3$. Indeed, if $a\ge 2$ then by \Cref{eq:SL2newcondition}, the right-hand side of \eqref{eq:alpha3} is at least $2\log(q)/q^{d(\alpha_3)/2}$, and for $a=1$ it is equal to $1/(q\log(q))$. In particular, if $a=0$ or $a\ge 2$ then we get from \Cref{eq:index,eq:alpha1,eq:alpha2,eq:alpha3,eq:alpha4,eq:alpha5}
\[
\frac{\alpha(L,V)}{|H:L|}> \frac{2\log(q)}{f(C)q^{(d(\alpha_2)+d(\alpha_3)+ d(\alpha_4)+ d(\alpha_5))/2}}
\]
which for sufficiently large $qd$ is at least $2\log(q)q^{-d/2}$, giving \eqref{eq:desired_inequality} as desired. If $a=1$, then the assumption $d\ge 3$ implies that $\Omega_1\cup \Omega_2\cup \Omega_4\cup \Omega_5\neq \varnothing$, and one concludes similarly, using that $d(\alpha_i)=1$ or $\ge 3$ for $i=2,4,5$.

\textbf{Assume now} $L$ is not irreducible. Letting $V'$ be a component, we have $\alpha(L,V')=\alpha(L,V)$ and $|V'|\le |V|^{1/2}$, so if $d':=\dim(V')\ge 3$ then the conclusion follows immediately from \Cref{eq:desired_inequality} (and in fact, we can simply use $|V'|\le |V|$). If $d'=2$,
then $d\ge 4$ and  by \Cref{prop:GamL_2}, $\alpha(L,V')\gg 1/q$. Since $|H:L|\ll 1$
we have $\alpha(L,V)\gg 1/q \ge 2\log(q) |H:L|q^{-d/2}$ for $qd$ large.

This concludes the proof of \Cref{t:affine} assuming \Cref{prop:GamL_1,prop:GamL_2,prop:combination}.
\end{proof}

\section{$\GamL_1(q)$}
\label{sec:GammaL1}

In this section we prove \Cref{t:affine} in the case where $d=r$ (that is, \Cref{prop:GamL_1}). The bulk of the proof boils down to elementary number theory.

We will work with any subgroup of $\GamL_1(q^d)$ (with no irreducibility assumption); for convenience, we replace $q^d$ by $q$ in the notation, so $G\le \GamL_1(q)$ is not semiregular on $V\sm\{0\}$. 

\iffalse
\begin{proposition}
\label{prop:agl1}
    Let $G\le \GamL_1(q)$ be such that $G$ is not semiregular on $V\sm\{0\}$. If $q$ is sufficiently large then $\alpha(G,V)\ge  h(q)$ and $\delta(V\rtimes G,V)\ge g(q)$.
\end{proposition}
\fi

Let us now fix some notation. Write $q=p^f$, where $G\le \GL_1(q)\rtimes \mathrm{Gal}(\F_q/\F_p)$ and $G$ projects onto $\mathrm{Gal}(\F_q/\F_p)$ (so in this section $p$ need not be prime). Since $G$ is not semiregular on $V\sm\{0\}$, we have $f>1$.

Denote $H:=G\cap \GL_1(q)$ and $t:=|H|$, and write $G=\gen{H,z}$, with $z=\tau x$, $\tau$ is the $p$-th power map and $x\in \GL_1(q)$. Let $\overline x:=xH\in \GL_1(q)/H$ and $m:=|\overline x|$, so $m \mid (q-1)/(p-1)$ and $m\mid (q-1)/t$. We denote by $\Delta$ the set of nontrivial elements of $G$ fixing a nonzero vector, and by $(a,b)$ the greatest common divisor of the positive integers $a$ and $b$.

Fix a coset $z^\ell H$, with $\ell$ a proper divisor of $f$.

\begin{lemma}
\label{l:agl_1_coset}
We have that $\Delta\cap z^\ell H\neq \varnothing$ if and only if $\overline x^{(p^\ell-1)/(p-1)}$ is a $(p^\ell-1)$-th power. Moreover, if this is the case, 
\[
|\Delta\cap z^\ell H|=\Big(\frac{q-1}{p^\ell-1}, t\Big). 
\]
\end{lemma}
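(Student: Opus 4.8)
The plan is to work entirely inside the field $\F_q$ with $q=p^f$ and translate the condition "$g$ fixes a nonzero vector" into a solvability statement about a norm-type map. Write a general element of $z^\ell H$ as $g = z^\ell h$ with $h\in H$; since $z=\tau x$ where $\tau$ is the $p$-th power map, we have $z^\ell$ acting on $v\in\F_q$ as $v\mapsto x^{1+p+\cdots+p^{\ell-1}} v^{p^\ell} = x^{(p^\ell-1)/(p-1)} v^{p^\ell}$, and then $h$ multiplies by some $t_0\in H$. So $g$ fixes a nonzero $v$ iff there exist $v\neq 0$ and $t_0\in H$ with $x^{(p^\ell-1)/(p-1)} t_0\, v^{p^\ell} = v$, i.e. $v^{p^\ell-1} = x^{(p^\ell-1)/(p-1)} t_0$. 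Thus $\Delta\cap z^\ell H\neq\varnothing$ iff the coset $x^{(p^\ell-1)/(p-1)} H$ meets the subgroup of $(p^\ell-1)$-th powers in $\F_q^\times$, which is exactly the cyclic subgroup of index $(q-1, p^\ell-1) = p^\ell-1$ (using $p^\ell - 1 \mid q-1$). Passing to the quotient $\F_q^\times/H$, this says precisely that $\overline x^{(p^\ell-1)/(p-1)}$ lies in the image of the $(p^\ell-1)$-th power map on $\F_q^\times/H$; but since $H$ has order $t$ and $(p^\ell-1)\cdot t$ need not divide $q-1$, I should instead phrase it as: the image of the $(p^\ell-1)$-th powers together with $H$ is the subgroup of index $\big((q-1)/t,\, p^\ell-1\big)$ divided appropriately — I'll need to be careful that the clean statement in the lemma, "$\overline x^{(p^\ell-1)/(p-1)}$ is a $(p^\ell-1)$-th power in $\F_q^\times/H$", is equivalent to the displayed solvability. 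This equivalence is a short exercise once one notes $\gcd(p^\ell-1, (q-1)/(p-1))$ relationships, but it is the first thing to nail down carefully.

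For the count: suppose $\Delta\cap z^\ell H\neq\varnothing$, and fix one solution, i.e. one pair $(v_0, t_0)$ with $v_0^{p^\ell-1} = x^{(p^\ell-1)/(p-1)} t_0$. I want to count pairs $(v, t)$ with $v\in\F_q^\times$, $t\in H$, $v^{p^\ell-1} = x^{(p^\ell-1)/(p-1)} t$, but then quotient by the fact that different $v$ giving the same element $g=z^\ell h$ should be identified correctly — actually the cleanest bookkeeping is: $|\Delta\cap z^\ell H|$ equals the number of $h\in H$ such that $z^\ell h$ has a nonzero fixed vector, and $z^\ell h$ fixes $v\neq 0$ iff $v^{p^\ell-1} = x^{(p^\ell-1)/(p-1)}\cdot(\text{the }\GL_1\text{-part of }h)$. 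So I count the number of values $t\in H$ for which $x^{(p^\ell-1)/(p-1)} t$ is a $(p^\ell-1)$-th power in $\F_q^\times$. The $(p^\ell-1)$-th powers form a subgroup $P$ of index $p^\ell-1$ in $\F_q^\times$; the set of admissible $t$ is $H \cap x^{-(p^\ell-1)/(p-1)} P$, which (given nonemptiness) is a coset of $H\cap P$ inside $H$. Now $|H\cap P| = |H|/[H : H\cap P] = |H|\big/[HP:P]$, and $[\F_q^\times : HP]$ divides $[\F_q^\times:P] = p^\ell-1$; a standard order computation gives $|H\cap P| = t\,(p^\ell-1)/[\F_q^\times:HP]$... at this point I expect the arithmetic to collapse to $|H\cap P| = \big((q-1)/(p^\ell-1),\, t\big)$ after simplifying, because $\F_q^\times$ is cyclic of order $q-1$, $H$ is the unique subgroup of order $t$, and $P$ is the unique subgroup of order $(q-1)/(p^\ell-1)$, so $H\cap P$ is the unique subgroup of order $\gcd\!\big(t,\, (q-1)/(p^\ell-1)\big)$. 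That is exactly the claimed formula.

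So the skeleton is: (1) reduce fixed-vector condition on $z^\ell h$ to the equation $v^{p^\ell-1} = x^{(p^\ell-1)/(p-1)} t$ in $\F_q^\times$; (2) observe $P := (\F_q^\times)^{p^\ell-1}$ is the unique subgroup of order $(q-1)/(p^\ell-1)$, using $p^\ell-1\mid q-1$; (3) the solvability statement becomes $x^{(p^\ell-1)/(p-1)} H \cap P \neq\varnothing$, equivalently $\overline x^{(p^\ell-1)/(p-1)} \in$ image of $(p^\ell-1)$-th power map on $\F_q^\times/H$, which I restate as the lemma's condition; (4) when solvable, $\{t\in H : x^{(p^\ell-1)/(p-1)} t\in P\}$ is a coset of $H\cap P$, and $|H\cap P| = \gcd(t, (q-1)/(p^\ell-1))$ by cyclicity; (5) conclude this count equals $|\Delta\cap z^\ell H|$ by checking the map $h\mapsto (\text{fixed vectors of }z^\ell h)$ — specifically that for each admissible $h$ there is at least one fixed nonzero vector, so $h$ contributes to $\Delta$, giving the stated cardinality.

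The main obstacle I anticipate is step (3)/(5): making the bijection between "admissible $h\in H$" and "$\Delta \cap z^\ell H$" airtight, and checking that the lemma's clean hypothesis "$\overline x^{(p^\ell-1)/(p-1)}$ is a $(p^\ell-1)$-th power" in $\F_q^\times/H$ is genuinely equivalent to $x^{(p^\ell-1)/(p-1)} H \cap (\F_q^\times)^{p^\ell-1}\neq\varnothing$ — one must verify that being a $(p^\ell-1)$-th power in the quotient group is not weaker than being a product of a $(p^\ell-1)$-th power in $\F_q^\times$ and an element of $H$, which works out because $H$ and $P$ are both characteristic (unique of their order) in the cyclic group $\F_q^\times$, but deserves an explicit line. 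The pure number theory in steps (2) and (4) is routine given cyclicity of $\F_q^\times$.
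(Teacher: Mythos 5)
Your argument is correct and is essentially the paper's proof: translate the fixed-point condition $v z^\ell h = v$ into an arithmetic equation in $\F_q^\times$, observe that solvability is equivalent to $\overline{x}^{(p^\ell-1)/(p-1)}$ being a $(p^\ell-1)$-th power modulo $H$, and count using cyclicity of $\F_q^\times$ to get $|H\cap P|=\big((q-1)/(p^\ell-1),\,t\big)$. Two small remarks: your displayed equation should read $v^{1-p^\ell}=x^{(p^\ell-1)/(p-1)}t_0$ (a harmless sign slip, since the set $P$ of $(p^\ell-1)$-th powers is a subgroup and hence closed under inverses), and the equivalence you flag as delicate in step (3) is automatic for any abelian group $A$ with subgroup $H$ --- an element of $A/H$ is an $n$-th power in the quotient if and only if a representative in $A$ lies in $PH$, with $P$ the subgroup of $n$-th powers, so no gcd analysis is needed there.
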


\begin{proof}
Write $y:=x^{(p^\ell-1)/(p-1)}$, so $z^\ell=\tau^\ell y$. For $0\neq v\in \F_q$ and $h\in H$ we have $vz^\ell h = v^{p^\ell}yh$, and this is equal to $v$ if and only if $yh = v^{1-p^\ell}$. In particular, there exist $h$ and $v\neq 0$ such that $vz^\ell h=v$ if and only if $y$ is a $(p^\ell-1)$-th power modulo $H$.

Assume now that $yh=v^{1-p^\ell}$. Then $\Delta\cap z^\ell H$ is the set of elements obtained by multiplying $z^\ell h$ by a $(p^\ell-1)$-th power of $\GL_1(q)$ contained in $H$. The number of these elements is $((q-1)/(p^\ell-1),t)$, and this concludes the proof.
\end{proof}

We introduce further notation. For a prime number $r$ and a positive integer $a$, we let $\gamma_r(a)$ be the integer $i$ such that $r^i$ is the $r$-part of $a$. Moreover, we let $\overline\gamma_r(a):=\gamma_r((p^a-1)/(p-1))$. Finally, we write $m=(q-1)/(tC)$ for a positive integer $C$.

We will often be concerned with several primes $r_1, \ldots, r_b$. In this case, for convenience we will write $\gamma^i(a)=\gamma_{r_i}(a)$, and similarly $\overline\gamma^i(a)=\overline\gamma_{r_i}(a)$.
\begin{lemma}
\label{l_r_part_smaller}
Let $r_1, \ldots, r_b$ be the distinct prime divisors of $(f,p-1)$ (possibly $b=0$), and let $\ell<f$ be a divisor of $f$. Then, $\Delta\cap z^\ell H\neq \varnothing $ if and only if, for every $i=1, \ldots, b$, one of the following holds:
\begin{itemize}
    \item[(i)] $\gamma^i(m)\le \overline\gamma^i(\ell)$.
    \item[(ii)] $\gamma^i(p^\ell-1) \le \gamma^i(C) + \overline\gamma^i(\ell)$, i.e., $\gamma^i(p-1) \le \gamma^i(C)$.
\end{itemize}
\end{lemma}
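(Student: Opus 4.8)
The plan is to apply \Cref{l:agl_1_coset}, which says that $\Delta \cap z^\ell H \neq \varnothing$ iff $\overline x^{(p^\ell-1)/(p-1)}$ is a $(p^\ell-1)$-th power in the cyclic group $\GL_1(q)/H$, and to translate this single divisibility-in-a-cyclic-group condition prime by prime. Set $u := (q-1)/t = |\GL_1(q)/H|$, so that $\overline x$ has order $m$ dividing $u$, and recall $m = u/C$. Writing everything multiplicatively inside the cyclic group of order $u$, the element $\overline x^{(p^\ell-1)/(p-1)}$ is a $(p^\ell-1)$-th power iff for every prime $r$ the $r$-part of the condition holds; and only primes $r$ dividing $u$ can obstruct anything. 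The first step is therefore to observe that an element $w$ of order dividing $m$ in a cyclic group of order $u$ is an $e$-th power iff, for each prime $r$, $\gamma_r(\mathrm{ord}(w)) \le \gamma_r(u) - \min(\gamma_r(e), \gamma_r(u))$, equivalently iff $\gamma_r(\mathrm{ord}(w)) + \min(\gamma_r(e),\gamma_r(u)) \le \gamma_r(u)$. Here $w = \overline x^{(p^\ell-1)/(p-1)}$ and $e = p^\ell - 1$.

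Next I would compute the two relevant valuations. Since $\overline x$ has order $m$, the order of $w = \overline x^{(p^\ell-1)/(p-1)}$ has $r$-part equal to $r^{\max(0,\ \gamma_r(m) - \overline\gamma_r(\ell))}$, using $\gamma_r((p^\ell-1)/(p-1)) = \overline\gamma_r(\ell)$ by definition. Also $\gamma_r(e) = \gamma_r(p^\ell-1)$. So the per-prime condition becomes
\[
\max\big(0,\ \gamma_r(m) - \overline\gamma_r(\ell)\big) + \min\big(\gamma_r(p^\ell-1),\ \gamma_r(u)\big) \le \gamma_r(u).
\]
If $\gamma_r(m) \le \overline\gamma_r(\ell)$ the first term vanishes and the inequality holds automatically (this is case (i)). Otherwise the condition reads $\gamma_r(m) - \overline\gamma_r(\ell) + \min(\gamma_r(p^\ell-1), \gamma_r(u)) \le \gamma_r(u)$; since $u = mC$, we have $\gamma_r(u) = \gamma_r(m) + \gamma_r(C)$, so this simplifies to $\min(\gamma_r(p^\ell-1),\gamma_r(u)) \le \overline\gamma_r(\ell) + \gamma_r(C)$. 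Here one checks that $\gamma_r(p^\ell - 1) \le \gamma_r(u)$: indeed $p^\ell - 1$ divides $p^f - 1 = q - 1$, and $t$ is coprime-free... more carefully, $\gamma_r(q-1) \ge \gamma_r(p^\ell-1)$, and since $r \mid u \mid q-1$ one has to rule out the $r$-part of $p^\ell-1$ being "hidden" in $t$; but in fact $m \mid (q-1)/(p-1)$ forces the relevant bookkeeping, so $\min(\gamma_r(p^\ell-1),\gamma_r(u)) = \gamma_r(p^\ell - 1)$ and the condition is $\gamma_r(p^\ell-1) \le \overline\gamma_r(\ell) + \gamma_r(C)$, which is case (ii). Finally, the stated equivalence $\gamma^i(p^\ell-1) \le \gamma^i(C) + \overline\gamma^i(\ell)$ iff $\gamma^i(p-1) \le \gamma^i(C)$ uses the identity $\gamma_r(p^\ell-1) = \gamma_r(p-1) + \gamma_r(\ell)$ when $r \mid p-1$ (a lifting-the-exponent / standard cyclotomic fact, valid since each $r_i$ divides $p-1$) together with $\overline\gamma_r(\ell) = \gamma_r((p^\ell-1)/(p-1)) = \gamma_r(\ell)$ in that same regime.

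The last step is to note that primes $r \nmid (f,p-1)$ impose no constraint: if $r \nmid p-1$ then $r \nmid (p^\ell-1)/(p-1)$ is not quite automatic, but the point is that one only needs the condition for primes that can actually obstruct $w$ from being an $e$-th power, and a short case analysis (splitting on whether $r \mid f$ and whether $r \mid p-1$) shows that for $r \nmid (f, p-1)$ the per-prime inequality is always satisfied — either because $\overline\gamma_r(\ell)$ absorbs $\gamma_r(m)$, or because $\gamma_r(p^\ell-1)$ is small. I would organize this as: reduce to primes dividing $u$; among those, handle $r \mid (f,p-1)$ by the dichotomy (i)/(ii) above, and dispatch the rest. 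The main obstacle I anticipate is the careful valuation bookkeeping in the step $\min(\gamma_r(p^\ell-1),\gamma_r(u)) = \gamma_r(p^\ell-1)$ and in verifying that non-$(f,p-1)$ primes never obstruct — these require keeping straight the three constraints $m \mid (q-1)/(p-1)$, $m \mid (q-1)/t$, and $u = mC$, and invoking the lifting-the-exponent lemma for $r \mid p-1$ versus the order of $p$ mod $r$ for $r \nmid p-1$.
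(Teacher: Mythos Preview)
Your approach is essentially the paper's: translate \Cref{l:agl_1_coset} into a prime-by-prime valuation inequality in the cyclic group $\GL_1(q)/H$, identify it with the dichotomy (i)/(ii) for the primes $r_i\mid(f,p-1)$, and verify that primes outside $(f,p-1)$ never obstruct (the paper's key observation being that $r\mid p-1$ and $r\mid m$ force $r\mid (p^f-1)/(p-1)\equiv f\pmod r$, hence $r\mid f$; you have the ingredients for this).

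Two steps as written need repair, though neither is a real gap. First, the claim $\gamma_r(p^\ell-1)\le\gamma_r(u)$ is not justified and can fail (nothing prevents the $r$-part of $p^\ell-1$ from living in $t$); but you do not need it. Once (i) fails you have $\gamma_r(u)=\gamma_r(m)+\gamma_r(C)>\overline\gamma_r(\ell)+\gamma_r(C)$, so the inequality $\min(\gamma_r(p^\ell-1),\gamma_r(u))\le\overline\gamma_r(\ell)+\gamma_r(C)$ can only hold through the first argument of the $\min$, and that is exactly (ii). Second, the equivalence of the two formulations of (ii) is the tautology $\gamma_r(p^\ell-1)-\overline\gamma_r(\ell)=\gamma_r(p-1)$, by definition of $\overline\gamma_r$; your appeal to lifting-the-exponent via $\gamma_r(p^\ell-1)=\gamma_r(p-1)+\gamma_r(\ell)$ is unnecessary and in fact fails for $r=2$ when $p\equiv 3\pmod 4$ and $\ell$ is even.
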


\begin{proof}
By \Cref{l:agl_1_coset}, $\Delta\cap z^\ell H\neq \varnothing$ if and only if $\overline x^{(p^\ell-1)/(p-1)}$ is a $(p^\ell-1)$-th power. Note that $\overline x^{(p^\ell - 1)/(p-1)}$ is a $(p^\ell -1)$-th power if and only if
\[
\frac{m}{(m,(p^\ell-1)/(p-1))} \mid \frac{(p^f-1)/t}{((p^f-1)/t, p^\ell-1)}.
\]
Using that $Cm=(p^f-1)/t$, we see that this is equivalent to 
\[
(Cm, p^\ell-1) \mid C\cdot (m,(p^\ell-1)/(p-1)).
\]
This is equivalent to saying that, for every prime number $r$,
\begin{equation}
\label{eq_cond}
\min\{\gamma_r(C) + \gamma_r(m),\gamma_r(p^\ell-1)\} \le \gamma_r(C) + \min \{\gamma_r(m), \overline\gamma_r(\ell)\}.
\end{equation}
We need to show that \eqref{eq_cond} holds for every $r$ if and only, for every $i=1,\ldots, b$, (i) or (ii) in the statement holds.

We first claim that \eqref{eq_cond} holds for $r=r_i$ if and only if (i) or (ii) holds for $i$. This is a straightforward check; let us show, for instance, that (i) or (ii) in the statement for $i$ implies \eqref{eq_cond} for $r=r_i$. Assume (i). Then the RHS of \eqref{eq_cond} for $r=r_i$ is $\gamma^i(C) + \gamma^i(m)$. But this term appears in the minimum of the LHS, so \eqref{eq_cond} holds. Assume, now, (ii), and assume that (i) does not hold (otherwise we are in the previous case). Then $\gamma^i(m)> \overline\gamma^i(\ell)$, so by (ii) the LHS is $\gamma^i(p^\ell-1)$; moreover the RHS is $\gamma^i(C) + \overline\gamma^i(\ell)$, hence \eqref{eq_cond} holds by (ii).

The converse implication for $r=r_i$ is proved similarly: Assume that \eqref{eq_cond} holds for $r=r_i$, and assume that (i) does not hold for $i$; then deduce easily that (ii) must hold.

In order to conclude the proof, it is sufficient to show that \eqref{eq_cond} always holds for every prime $r$ not dividing $(f,p-1)$. In order to check this, we may assume $\gamma_r(m)\ge 1$, otherwise \eqref{eq_cond} holds easily. So assume that $\gamma_r(m)\ge 1$ and that $r$ does not divide $(f,p-1)$.

If $r$ does not divide $p-1$, then $\gamma_r(p^\ell-1) = \overline\gamma_r(\ell)$. Now, if $\gamma_r(m)\le \overline\gamma_r(\ell)$, then by the same argument as above \eqref{eq_cond} holds. If, instead, $\gamma_r(m)\ge \overline\gamma_r(\ell)$, then the RHS is $\gamma_r(C) + \overline\gamma_r(\ell) = \gamma_r(C) + \gamma_r(p^\ell-1)$, and $\gamma_r(p^\ell-1)$ appears in the minimum in the LHS, hence \eqref{eq_cond} holds.

Therefore, in order to conclude the proof, it is sufficient to show that, if $\gamma_r(m)\ge 1$ and $r$ does not divide $(f,p-1)$, then $r$ does not divide $p-1$. Assume the contrary. We have $r\mid m\mid (p^f-1)/(p-1)$, hence $r\mid (p^f-1)/(p-1) - (p-1) = 2+p^2+\cdots + p^{f-1}$. But $r$ divides also $p^2-1$; hence $r$ divides $2+p^2+\cdots + p^{f-1}- (p^2-1) = 3+p^3+\cdots +p^{f-1}$. Going on in this way, we see that $r$ divides $f$, hence it divides $(f,p-1)$, which is a contradiction. This concludes the proof.
\end{proof}

The previous lemma reduces the problem to elementary number theory.
We need a few more lemmas. 

\begin{lemma}
\label{l_coprime_no_growth}
Let $r$ be a prime, and assume $\gamma_r(p-1)\ge 1$ and  $r \nmid j$. Then $\gamma_r(p^j-1)=\gamma_r(p-1)$.
\end{lemma}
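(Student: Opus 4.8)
The statement is the standard ``lifting the exponent'' fact for the prime-to-$p$ part of $p^j-1$, in the case where $r$ is coprime to the exponent $j$. The plan is to argue multiplicatively on the order of $p$ modulo powers of $r$. Write $e$ for the multiplicative order of $p$ modulo $r$; the hypothesis $\gamma_r(p-1)\ge 1$ says exactly that $r\mid p-1$, i.e. $e=1$.

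\textbf{Key steps.} First I would record the elementary observation that for any integer $j$, $r\mid p^j-1$ if and only if $e\mid j$, and here $e=1$, so $r$ always divides $p^j-1$; in particular $\gamma_r(p^j-1)\ge 1$, matching $\gamma_r(p-1)\ge 1$. Next, the heart of the matter is the case $r$ odd: I would invoke the lifting-the-exponent lemma, which in the form needed here states that if $r$ is an odd prime with $r\mid p-1$, then $\gamma_r(p^j-1)=\gamma_r(p-1)+\gamma_r(j)$. Since $r\nmid j$ gives $\gamma_r(j)=0$, this yields $\gamma_r(p^j-1)=\gamma_r(p-1)$ at once. For completeness one must also treat $r=2$: here $\gamma_2(p-1)\ge 1$ means $p$ is odd, and LTE for $r=2$ reads $\gamma_2(p^j-1)=\gamma_2(p-1)+\gamma_2(p+1)+\gamma_2(j)-1$ when $j$ is even, but when $j$ is odd (which is forced by $2\nmid j$) the factorization $p^j-1=(p-1)(p^{j-1}+\dots+1)$ shows the second factor is a sum of $j$ odd terms, hence odd, so $\gamma_2(p^j-1)=\gamma_2(p-1)$ directly, with no need for the general $r=2$ formula.

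\textbf{Alternative self-contained route.} Rather than cite LTE, I could give a two-line direct argument: factor $p^j-1=(p-1)\sum_{i=0}^{j-1}p^i$, and show $r\nmid \sum_{i=0}^{j-1}p^i$. Indeed modulo $r$ we have $p\equiv 1$, so $\sum_{i=0}^{j-1}p^i\equiv j\pmod r$, and $r\nmid j$ by hypothesis; hence the second factor contributes no factor of $r$ and $\gamma_r(p^j-1)=\gamma_r(p-1)$. This handles all primes $r$ uniformly and is the cleanest presentation.

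\textbf{Main obstacle.} There is essentially no obstacle: the only point requiring a moment's care is making sure the argument is uniform in $r$ (including $r=2$), which the ``$\sum p^i\equiv j\pmod r$'' observation achieves immediately. I would therefore write the proof in the self-contained form of the previous paragraph.

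\begin{proof}
Write $p^j-1=(p-1)S$, where $S=\sum_{i=0}^{j-1}p^i$. Since $\gamma_r(p-1)\ge 1$, we have $p\equiv 1\pmod r$, hence $S\equiv j\pmod r$. As $r\nmid j$, we get $r\nmid S$, so $\gamma_r(S)=0$ and therefore $\gamma_r(p^j-1)=\gamma_r(p-1)+\gamma_r(S)=\gamma_r(p-1)$.
\end{proof}
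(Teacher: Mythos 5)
Your self-contained argument is exactly the paper's proof: both observe that $(p^j-1)/(p-1)=\sum_{i=0}^{j-1}p^i\equiv j\not\equiv 0\pmod r$ since $p\equiv 1\pmod r$, so the cofactor contributes no power of $r$. Correct, and the same approach.
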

\begin{proof}
Note that $(p^j-1)/(p-1) = 1+p+\cdots + p^{j-1} \equiv j \not\equiv 0\pmod r$.
\end{proof}

\begin{lemma}
\label{l:numbers}
Let $r$ be a prime and assume $i\ge 1$ and $(r,i)\neq (2,1)$. If $\gamma_r(p-1)=i$, then $\gamma_r(p^r-1)=i+1$.
\end{lemma}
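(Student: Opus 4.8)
The statement is the classical ``lifting the exponent'' lemma: for an odd prime $r$ (or $r=2$ with $i\ge 2$), if $\gamma_r(p-1)=i$ then $\gamma_r(p^r-1)=i+1$. The plan is to write $p=1+r^i u$ with $r\nmid u$ and expand $p^r-1=(1+r^iu)^r-1$ by the binomial theorem, and then to identify the $r$-adic valuation of each term. The leading term $\binom{r}{1}r^iu=r^{i+1}u$ has valuation exactly $i+1$; the goal is to show every other term has valuation strictly greater than $i+1$, so that the sum has valuation exactly $i+1$.

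First I would fix the factorization $(p^r-1)/(p-1)=1+p+\dots+p^{r-1}$ and show $\gamma_r$ of this quotient equals $1$; combined with $\gamma_r(p-1)=i$ this gives the result by additivity of $\gamma_r$ over products. To compute $\gamma_r(1+p+\dots+p^{r-1})$: reduce each $p^k\equiv 1\pmod{r^i}$ — actually more is true, write $p\equiv 1+r^iu\pmod{r^{i+1}}$ so $p^k\equiv 1+kr^iu\pmod{r^{i+1}}$ (here one uses that cross terms in $(1+r^iu)^k$ have valuation $\ge 2i\ge i+1$, which needs $i\ge 1$, and for the $r=2$ case needs $i\ge 2$ — this is exactly where the hypothesis $(r,i)\ne(2,1)$ enters). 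Summing over $k=0,\dots,r-1$ gives $1+p+\dots+p^{r-1}\equiv r+\binom{r}{2}r^iu\pmod{r^{i+1}}$. Now $r\mid\binom r2 r^i$ with room to spare (for $r$ odd, $\binom r2=r(r-1)/2$ is divisible by $r$; even ignoring that, $r^i\binom r2$ has valuation $\ge i+1>0$), so $1+p+\dots+p^{r-1}\equiv r\pmod{r^{i+1}}$, hence its $r$-part is exactly $r^1$. Therefore $\gamma_r(p^r-1)=\gamma_r(p-1)+1=i+1$.

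The only subtlety — and the place the hypothesis $(r,i)\neq(2,1)$ is essential — is controlling the quadratic term $\binom{k}{2}r^{2i}u^2$ in the expansion $p^k=(1+r^iu)^k$: one needs $2i\ge i+1$, i.e.\ $i\ge 1$, which always holds, \emph{but} for the prime $r=2$ the term $\binom r2 r^i u=r^{i+1}u$ itself must not be conflated — when $r=2$, $i=1$ one has $p\equiv 3\pmod 4$, and indeed $p^2-1=(p-1)(p+1)$ with both factors even, so $\gamma_2(p^2-1)=1+\gamma_2(p+1)\ge 3$ can jump by more than one; the displayed congruence argument breaks because $r\cdot r^i u = 4u$ is no longer a negligible term modulo $r^{i+1}=4$. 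So the excluded case is genuinely different, and I would simply note (or the paper may handle elsewhere) that the hypothesis rules it out. Everything else is a routine binomial expansion with $r$-adic bookkeeping; the main obstacle is just presenting this cleanly, and in fact the paper's one-line proof via $(p^j-1)/(p-1)\equiv j$ together with the preceding lemma suggests exactly this route.
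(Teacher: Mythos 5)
Your overall strategy is sound, and it differs in presentation from the paper's: the paper expands $p^r=(ar^i+1)^r$ directly modulo $r^{i+2}$ and reads off that it equals $ar^{i+1}+1$ (so the $r$-part jumps by exactly one), whereas you factor $p^r-1=(p-1)\bigl(1+p+\cdots+p^{r-1}\bigr)$ and compute $\gamma_r$ of the second factor, appealing to additivity of $\gamma_r$. Both are elementary binomial/$r$-adic bookkeeping, and your factored form isolates cleanly where the ``$+1$'' comes from.

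There is, however, a small slip in your $r=2$ bookkeeping. You assert ``$r^i\binom{r}{2}$ has valuation $\geq i+1$'' and deduce ``$1+p+\cdots+p^{r-1}\equiv r\pmod{r^{i+1}}$''; both of these fail for $r=2$, since $\binom{2}{2}=1$ makes $\binom{r}{2}r^iu=2^iu$ have valuation exactly $i$ (not $\geq i+1$), and $1+p=2+2^iu\not\equiv 2\pmod{2^{i+1}}$ because $u$ is odd. Your intended conclusion $\gamma_r\bigl(1+p+\cdots+p^{r-1}\bigr)=1$ is still correct when $(r,i)\neq(2,1)$, but the argument should be: the leading term $r$ has valuation exactly $1$; the correction term $\binom{r}{2}r^iu$ has valuation $\geq i+1$ if $r$ is odd, and valuation exactly $i$ if $r=2$; in either case this strictly exceeds $1$ precisely when $(r,i)\neq(2,1)$, so by the ultrametric inequality the sum has valuation exactly $1$. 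With this patch (and noting $2i\geq i+1$ to justify $p^k\equiv 1+kr^iu\pmod{r^{i+1}}$, which you already flag) your proof is complete. The paper's direct expansion modulo $r^{i+2}$ sidesteps the wrinkle: there the only problematic binomial term is $a^rr^{ir}$, which vanishes modulo $r^{i+2}$ exactly when $(r,i)\neq(2,1)$, and the two primes are handled uniformly in one line.
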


\begin{proof}
Assume $p-1=ar^i$, with $(a,r)=1$, so $p\equiv ar^i +1 \pmod{r^{i+2}}$. Then $p^r \equiv (ar^i+1)^r \equiv a^rr^{ir} + rar^i + 1 \pmod{r^{i+2}}$. If $(r,i)\neq (2,1)$, then this is $\equiv ar^{i+1} + 1\pmod{r^{i+2}}$, whence the statement holds.
\end{proof}

Note that for $(r,i)= (2,1)$ \Cref{l:numbers} does not hold. Indeed, if $\gamma_2(p-1)=1$, then $\gamma_2(p^2-1)\ge 3$.

\begin{lemma}
\label{l_r_odd_grows_by_one}
Let $r$ be an odd prime, let $i\ge 1$ and assume $\gamma_r(p-1)\ge 1$. Then $\gamma_r((p^{r^i}-1)/(p-1))=i$. In particular, $\gamma_r((q-1)/(p-1))=\gamma_r(f)$.
\end{lemma}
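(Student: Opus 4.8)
The idea is to obtain the $r$-adic valuation by iterating the one-step growth estimate \Cref{l:numbers}, observing that since $r$ is odd the exceptional pair $(r,i)=(2,1)$ never occurs, so no case analysis is needed. Both \Cref{l:numbers} and \Cref{l_coprime_no_growth} are stated for the base $p$, but their proofs are bare binomial/geometric-series identities, hence remain valid verbatim with $p$ replaced by any integer congruent to $1$ modulo $r$; I would record this once and use it freely.

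\textbf{Step 1 (the main claim).} Set $i_0:=\gamma_r(p-1)\ge 1$ and, for $k\ge 0$, $c_k:=\gamma_r(p^{r^k}-1)$, so $c_0=i_0$. I would show $c_k=i_0+k$ for all $k\ge 0$ by induction on $k$. For the inductive step, given $c_{k-1}=i_0+(k-1)\ge 1$, apply \Cref{l:numbers} with base $P:=p^{r^{k-1}}$: it applies because $\gamma_r(P-1)=c_{k-1}\ge 1$ and, $r$ being odd, $(r,c_{k-1})\neq(2,1)$; its conclusion $\gamma_r(P^r-1)=c_{k-1}+1$ is precisely $c_k=i_0+k$. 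Taking $k=i$ then gives
\[
\gamma_r\!\Big(\frac{p^{r^i}-1}{p-1}\Big)=c_i-i_0=i .
\]

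\textbf{Step 2 (the ``in particular'').} Recall $q=p^f$ and write $f=r^a m$ with $a=\gamma_r(f)$ and $r\nmid m$, so that $p^f-1=(p^{r^a})^m-1$. By Step 1, $\gamma_r(p^{r^a}-1)=i_0+a\ge 1$. Since $r\nmid m$, \Cref{l_coprime_no_growth} applied with base $p^{r^a}$ (valid as its proof only uses $1+P+\cdots+P^{m-1}\equiv m\pmod r$ when $P\equiv 1\pmod r$) yields $\gamma_r\big((p^{r^a})^m-1\big)=\gamma_r(p^{r^a}-1)=i_0+a$. Hence $\gamma_r\big((q-1)/(p-1)\big)=(i_0+a)-\gamma_r(p-1)=a=\gamma_r(f)$.

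\textbf{Main obstacle.} There is essentially none beyond bookkeeping: the only point requiring a word of justification is that \Cref{l:numbers,l_coprime_no_growth}, though phrased for the base $p$, apply to the auxiliary bases $p^{r^{k-1}}$ and $p^{r^a}$ as well, which is immediate from their proofs. Everything else is a short induction plus the telescoping identity relating $p^{r^i}-1$ to $p-1$.
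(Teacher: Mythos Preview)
Your proposal is correct and follows essentially the same approach as the paper: establish the base case $i=1$ via \Cref{l:numbers}, iterate by induction applying the same lemma with base $p^{r^{k-1}}$, and deduce the ``in particular'' by combining with \Cref{l_coprime_no_growth}. Your one extra remark---that these lemmas apply with the auxiliary bases---is in fact already covered by the paper's setup, since in this section $p$ is explicitly allowed not to be prime.
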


\begin{proof}
By \Cref{l:numbers}, $\gamma_r(p^r-1) = \gamma_r(p-1)+1$, hence the first assertion of the statement holds for $i=1$; then use induction. The last assertion (``In particular...'') follows from this and Lemma \ref{l_coprime_no_growth}.
\end{proof}
The case $r=2$ is different.

\begin{lemma}
\label{l_r_even_growth}
Let $i\ge 1$ and assume $\gamma_2(p+1) = j$, with $j\ge 1$. Then $\gamma_2((p^{2^i}-1)/(p-1))=i+j-1$. In particular, $\gamma_2((q-1)/(p-1))=\gamma_2(f)+j-1$.
\end{lemma}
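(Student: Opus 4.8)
The plan is to prove Lemma~\ref{l_r_even_growth} by the same strategy as Lemmas~\ref{l:numbers}--\ref{l_r_odd_grows_by_one}, namely by an explicit $2$-adic expansion of $p$ followed by induction on $i$, but being careful about the well-known anomaly that squaring can increase the $2$-adic valuation by more than one on the first step. First I would reduce the claim about $(p^{2^i}-1)/(p-1)$ to a claim about $\gamma_2(p^{2^i}-1)$: since $\gamma_2(p-1)=1$ (because $p$ is odd and $\gamma_2(p+1)=j\ge 1$ forces $\gamma_2(p-1)=1$, as $p-1$ and $p+1$ differ by $2$ and cannot both be divisible by $4$), we have $\gamma_2((p^{2^i}-1)/(p-1))=\gamma_2(p^{2^i}-1)-1$, so it suffices to show $\gamma_2(p^{2^i}-1)=i+j$.

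The heart of the argument is then the factorization
\[
p^{2^i}-1=(p-1)(p+1)(p^2+1)(p^4+1)\cdots(p^{2^{i-1}}+1).
\]
Here $\gamma_2(p-1)=1$, $\gamma_2(p+1)=j$ by hypothesis, and for each $k\ge 1$ the term $p^{2^k}+1$ is $\equiv 2\pmod 4$ since $p^{2^k}$ is an odd square hence $\equiv 1\pmod 4$ (indeed $\equiv 1 \pmod 8$); thus $\gamma_2(p^{2^k}+1)=1$ for all $k\ge 1$. Summing the valuations gives $\gamma_2(p^{2^i}-1)=1+j+(i-1)\cdot 1=i+j$, which is exactly what is needed. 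For the final ``in particular'' clause, write $q=p^f$ and $f=2^a\cdot f'$ with $f'$ odd and $a=\gamma_2(f)$; applying Lemma~\ref{l_coprime_no_growth} with $r=2$ (valid since $\gamma_2(p-1)=1\ge 1$ and $f'$ is odd) gives $\gamma_2(p^{f'}-1)=\gamma_2(p-1)=1$, and then $\gamma_2(q-1)=\gamma_2((p^{f'})^{2^a}-1)$; noting $\gamma_2(p^{f'}+1)=\gamma_2(p+1)=j$ as well (again by Lemma~\ref{l_coprime_no_growth}, or directly since $p^{f'}\equiv p\pmod 4$ forces the same computation), the displayed factorization applied to the base $p^{f'}$ yields $\gamma_2(q-1)=a+j$, hence $\gamma_2((q-1)/(p-1))=a+j-1=\gamma_2(f)+j-1$.

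The main obstacle — really the only subtlety — is keeping straight the off-by-one coming from the irregular behaviour of the prime $2$: one must not blindly invoke a lifting-the-exponent statement of the form ``$\gamma_2(p^n-1)=\gamma_2(p-1)+\gamma_2(n)$'', which fails precisely when $n$ is even and $\gamma_2(p-1)=1$. The telescoping factorization sidesteps this cleanly by isolating the single anomalous factor $p+1$, so that all remaining factors contribute exactly $1$. I would present the factorization identity, the three valuation computations ($p-1$, $p+1$, and $p^{2^k}+1$ for $k\ge 1$), and then the one-line sum, followed by the short reduction of the general-$f$ case to the power-of-two case via Lemma~\ref{l_coprime_no_growth}.
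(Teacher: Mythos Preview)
Your telescoping factorization is essentially the paper's induction unrolled (the paper factors $p^{2^i}-1=(p^{2^{i-1}}-1)(p^{2^{i-1}}+1)$ one step at a time and uses the same observation that $p^{2^{i-1}}+1\equiv 2\pmod 4$ for $i\ge 2$).

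There is, however, a slip: you claim $\gamma_2(p-1)=1$, arguing that $p-1$ and $p+1$ cannot both be divisible by $4$. That is true, but the hypothesis allows $j=1$, in which case $p+1\equiv 2\pmod 4$, so $p\equiv 1\pmod 4$ and $\gamma_2(p-1)\ge 2$. Consequently your reduction ``it suffices to show $\gamma_2(p^{2^i}-1)=i+j$'' is not valid in general. The fix is immediate and the method survives: set $k:=\gamma_2(p-1)$; your factorization then gives $\gamma_2(p^{2^i}-1)=k+j+(i-1)$, whence $\gamma_2\bigl((p^{2^i}-1)/(p-1)\bigr)=i+j-1$ regardless of $k$. (The paper's induction sidesteps the issue by never isolating $\gamma_2(p-1)$.) The same adjustment carries through the ``in particular'' clause, where you should likewise replace the claimed value $1$ by $k$; also note that Lemma~\ref{l_coprime_no_growth} as stated concerns $p^a-1$ rather than $p^a+1$, so your direct argument (or the factorization $p^{f'}+1=(p+1)(p^{f'-1}-p^{f'-2}+\cdots+1)$ with an odd number of odd terms in the second factor) is the correct justification for $\gamma_2(p^{f'}+1)=j$.
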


\begin{proof}
By induction on $i$. The case $i=1$ is the hypothesis. For $i\ge 2$ write $p^{2^i} - 1 = (p^{2^{i-1}} +1)(p^{2^{i-1}} -1)$. We have $p^{2^{i-1}}+1 \equiv 2 \pmod 4$ (since $2^{i-1}$ is even), hence $\gamma_2(p^{2^i}-1)= \gamma_2(p^{2^{i-1}}-1) + 1$, whence  the statement follows by induction and \Cref{l_coprime_no_growth}.
\end{proof}

We are now ready to prove \Cref{prop:GamL_1}.

\begin{proof}[Proof of \Cref{prop:GamL_1}]  We need to show that if $G$ is not semiregular on $V\sm\{0\}$, then $\alpha(G)\ge g(q)$, $|A(G)|/|G|\ge 1/(q^{1/2}-1)$, and $\delta(V\rtimes G)\ge h(q)$,  where  $g$ and $h$ are defined in \eqref{eq:def_g} and \eqref{eq:def_h}.
	
Assume first there exists $\ell \mid f$, $\ell < f/2$, such that $\Delta\cap z^\ell H\neq \varnothing$.  By \Cref{l:agl_1_coset},
    \[
\alpha(G)> \frac{((q-1)/(p^\ell-1),t)}{tf} \ge \frac{1}{(p^\ell-1,t)f} > \frac{1}{fp^\ell}.
\]
(In the second inequality we used that $(a,bc) \le (a,b)(a,c)$.) 
For $\ell\le f/3$ we have $fp^\ell \ll q^{1/3}$, hence this case is done.

Assume now that $f$ is even and $\Delta\subseteq z^{f/2}H$. We will in fact prove the desired bound for every $q$. We start with the bound to $\alpha(G)$; the bounds to $\delta(V\rtimes G)$ and $|A(G)|/|G|$ will follow easily at the end.

Write $(t,p^{f/2}-1) = (p^{f/2}-1)/B$. Then
\begin{equation}
\label{eq:divisions}
m\mid \frac{p^f-1}{t} \mid B(p^{f/2}+1).    
\end{equation}
Let $r_1, \ldots, r_b$ denote the distinct prime divisors of $(f,p-1)$ (possibly $b=0$). By Lemma \ref{l_r_part_smaller}, for every $i=1,\ldots, b$, one of the following holds:
\begin{enumerate}
    \item $\gamma^i(m) \le \overline\gamma^i(f/2)$.
    \item $\gamma^i(p-1) \le \gamma^i(C)$.
\end{enumerate}

Let $s$ be a prime divisor or $f$. Note that, for every $i$ such that $s\neq r_i$, by Lemma \ref{l_coprime_no_growth} we have $\overline\gamma^i(f/s)=\overline\gamma^i(f)$. In particular, since $m\mid(p^f-1)/(p-1)$,  certainly $\gamma^i(m) \le \overline\gamma^i(f) = \overline\gamma^i(f/s)$. If $s$ is odd then by assumption $\Delta\cap z^{f/s}H=\varnothing$. We deduce from Lemma \ref{l_r_part_smaller} that
\begin{itemize}
    \item[(a)] For any odd prime $s$ dividing $f$, we have that $s\mid(p-1)$.
\end{itemize}

Let now $i\in \{1, \ldots, s\}$ be such that $r_i$ is odd (if it exists). By the same reasoning as above (with $s=r_i$), since $\Delta\cap z^{f/r_i}H=\varnothing$, by Lemma \ref{l_r_part_smaller} we must have $\gamma^i(m) > \overline\gamma^i(f/r_i)$ and $\gamma^i(p-1) > \gamma^i(C)$.

Moreover, by Lemma \ref{l_r_odd_grows_by_one}, $\overline\gamma^i(f/r_i) = \overline\gamma^i(f)-1$. Recall, now, that (1) or (2) holds. But (2) cannot hold, because it would contradict the final inequality of the previous paragraph; so (1) holds. We have, therefore, $\overline\gamma^i(f/r_i) < \gamma^i(m) \le \overline\gamma^i(f/2) = \overline\gamma^i(f/r_i) + 1 = \overline\gamma^i(f)=\gamma^i(f)$ (the last equality follows from  \Cref{l_r_odd_grows_by_one}), so it must be $\gamma^i(m) = \overline\gamma^i(f)=\gamma^i(f)$. 
We summarize this:
\begin{itemize}
\item[(b)] For any $i$ such that $r_i$ is odd, $\gamma^i(m) =\overline\gamma^i(f)= \gamma^i(f)$.
\end{itemize}

Then, we claim that

\begin{itemize}
    \item[(c)] Either $p$ is odd, or $f\equiv 2 \pmod 4$.
\end{itemize}
Indeed, assume, by contradiction, that $p$ is even and $f\equiv 0 \pmod 4$. Then, for every $i$, $r_i$ is odd
and $\overline\gamma^i(f/2) = \overline\gamma^i(f/4)=\overline\gamma^i(f)$, hence $\Delta\cap z^{f/2}H\neq \varnothing$ if and only if $\Delta\cap z^{f/4}H\neq \varnothing$, which contradicts our assumption. This proves (c).

Now, by \Cref{l:agl_1_coset} we have
\begin{equation}
\label{eq:first_GL1}
\alpha(G) = \frac{(t,p^{f/2}+1)}{tf} + \frac{1}{tf} \ge \frac{1}{f(t,p^{f/2}-1)} +\frac{1}{tf}= \frac{B}{f(p^{f/2}-1)} +\frac{1}{tf}.
\end{equation}
By (a) and (b), we have $\frac{f}{2^{\gamma_2(f)}}\mid m$, so by \Cref{eq:divisions} 
\begin{equation}
\label{eq:no_name}
tf \mid  2^{\gamma_2(f)}(p^f-1).
\end{equation}
By \eqref{eq:divisions}, $\frac{f}{2^{\gamma_2(f)}} \mid m\mid B(p^{f/2}+1)$, and  for $i$ such that $r_i$ is odd, $r_i \nmid p^{f/2}+1$, so that 
\begin{equation}
\label{eq:lasttt}
f\mid 2^{\gamma_2(f)}m \mid 2^{\gamma_2(f)}B.
\end{equation}.
If $\gamma_2(f)=1$, then by \eqref{eq:first_GL1}, \eqref{eq:no_name} and \eqref{eq:last_GL1} we get 
\begin{equation}
\label{eq:added1}
\alpha(G)\ge \frac{B}{f(p^{f/2}-1)} +\frac{1}{tf}\ge \frac{1}{2(p^{f/2}-1)} + \frac{1}{2(p^f-1)}= h(q),
\end{equation}
as wanted.

From now, therefore, we assume $N:=\gamma_2(f) \ge 2$. By (c), we have $p$ odd. If $t$ is odd, then $\gamma_2(B) = \gamma_2(p^{f/2}-1)$. 
By Lemma \ref{l_r_even_growth}, $\gamma_2(p^{f/2}-1)\ge N+j-1 \ge N$, where $j:=\gamma_2(p+1)\ge 1$.
Hence $\gamma_2(B)\ge \gamma_2(f)$, so by \eqref{eq:lasttt} $f\mid B$ and by \eqref{eq:first_GL1}
\[
\alpha(G) > \frac{B}{f(p^{f/2}-1)} \ge \frac{1}{p^{f/2}-1}
\]
and we are done. Assume now that $N\ge 2$ and $t$ is even. If $\gamma_2(t)=\gamma_2(p^f-1)$, then $m\mid \frac{p^f-1}{t}$ is odd, so $\gamma_2(m)=0\le \overline\gamma_2(f/4)$, therefore  $\Delta\cap z^{f/4}H\neq \varnothing$, contradiction. 

Assume finally that $N\ge 2$, $t$ is even and $\gamma_2(t)<\gamma_2(p^f-1)$. In particular, $\gamma_2(t) \le \max \{\gamma_2(p^{f/2}-1), \gamma_2(p^{f/2}+1)\}$. Also, clearly $\min \{\gamma_2(p^{f/2}-1), \gamma_2(p^{f/2}+1)\}=1$. It is easy, then, to deduce
\begin{equation}
   \label{eq:last_GL1} 
\alpha(G) = \frac{(t,p^{f/2}+1)}{tf} +\frac{1}{tf}= \frac{2}{f(t,p^{f/2}-1)} +\frac{1}{tf}= \frac{2B}{f(p^{f/2}-1)} +\frac{1}{tf}.
\end{equation}
Now, if $r_i$ is odd,  by the validity of (1) or (2) we deduce that either $\gamma^i(m)\le \overline\gamma^i(f/4)$ or $\gamma^i(p-1)\le \gamma^i(C)$. Since $\Delta\cap z^{f/4}H=\varnothing$, it follows from \Cref{l_r_part_smaller} that (2) does not hold,
and so $\gamma_2(m)\le \overline\gamma_2(f/2)$, and also $\gamma_2(m)> \overline\gamma_2(f/4)$. 
It follows  $\gamma_2(m) = \overline\gamma_2(f/2) = N+j-2\ge N-1$, so by \Cref{eq:lasttt} $f\mid 2m$, and by \eqref{eq:divisions} $ft\mid 2(p^f-1)$. Since by \eqref{eq:divisions} $m\mid B(p^{f/2}+1)$, we deduce $\gamma_2(B) \ge N+ j - 3 \ge N-2$, and so by \eqref{eq:lasttt} and \eqref{eq:last_GL1} we get
\begin{equation}
\label{eq:added_2}
\alpha(G) = \frac{2B}{f(p^{f/2}-1)}  + \frac{1}{tf}  \ge \frac{1}{2(p^{f/2}-1)} + \frac{1}{2(p^f-1)} = h(q).
\end{equation}
The lower bound to $\alpha(G)$ is proved. For the lower bound to $\delta(V\rtimes G)$, note that every element of $\Delta$ fixes exactly $p^{f/2}$ vectors,
and so by \Cref{derangements_eta} and some reorganization we get
\[
\delta(V\rtimes G) = \frac{q^{1/2}-1}{q}\left(\alpha(G) + \frac{1}{|G|q^{1/2}}\right).
\]
From inspection of this proof, we have either $|G|\le 2(q-1)$, or $\alpha(G)\ge 1/(q^{1/2}-1)$, in which case we are done. Therefore we may assume $|G|\le 2(q-1)$, and using $\alpha(G)\ge h(q)$, we get $\delta(V\rtimes G)\ge g(q)$ by some reorganization. Finally, for what concerns $|A(G)|/|G|$, by the proof of \Cref{l:agl_1_coset} we have $|A(G)|=2(t,p^{f/2}+1)$.
Then, the bound $|A(G)|/|G|\ge 1/(q^{1/2}-1)$ follows readily by inspection of the proof (specifically, as in \eqref{eq:first_GL1}  we get $|A(G)|/|G|\ge 2B/(f(p^{f/2}-1))$, and as in \eqref{eq:last_GL1} we get $|A(G)|/|G|= 4B/(f(p^{f/2}-1))$, and in both cases the given bounds for $B$, leading to \eqref{eq:added1} and \eqref{eq:added_2}, give the required estimate).
\end{proof}

\begin{remark}
\label{ex:bound_sharp}
Let $q=p^f$ with $f$ even and $G=\GL_1(q)\rtimes \gen{\sigma}$, where $\sigma$ is the $q^{1/2}$-th power map. By  \Cref{l:agl_1_coset}, the proportion of elements in $G\sm \GL_1(q)$ fixing a nonzero vector is 
\[
\frac{(q^{1/2}+1,t)}{2t} = \frac{1}{2(q^{1/2}-1)}
\]
(with $t=q-1$). Counting the identity, we have
\[
\alpha(G) = \frac{1}{2(q^{1/2}-1)} + \frac{1}{2(q-1)} = h(q)
\]
and so equality can be attained in \Cref{t:affine}(3). What is more, by the same calculation as at the end of the previous proof, we get $\delta(V\rtimes G)=g(q)$, and so equality can be attained in \Cref{t:main}(2).
\end{remark}

\section{$\GamL_2(q)$} 
\label{sec:GamL2}
In this section we prove \Cref{prop:GamL_2}. For convenience, we replace $q^{d/2}$ by $q$ in the notation.  The cases to consider are when $G$ normalizes $\SL_2(5)$, $Q_8$, or $\SL_2(q_0)$ with $q$ a power of $q_0\ge 4$. We consider each case in turn. In the first two cases, in some sense we will argue by reducing the problem to $\GammaL_1(q)$, and so will use several times \Cref{prop:GamL_1}, which we proved in \Cref{sec:GammaL1}.

\subsection{$\SL_2(5)$} Assume $q\equiv \pm 1 \pmod{10}$
and let $L:=\SL_2(5)$,  $L\le G \le \N_{\GamL_2(q)}(L)=LZ\rtimes \gen\tau$,
 where $Z=\F_q^\times$, $q=p^f$, $p$ prime, and $\tau$ is the $p$-th power map. Recall that $f(n)$ was defined in \eqref{eq:def_f}.
 
\begin{lemma}
\label{l:sl2(5)}
    For $q$ sufficiently large we have $\alpha(G) \ge 1/(60(q-1))$ and $\delta(V\rtimes G) \ge f(q^2)$. If moreover $G$ is not semiregular on $V\sm\{0\}$, then $\alpha(G)\ge 31/(60(q-1))$, $\delta(V\rtimes G) > 31/(60(q-1))\cdot (1-1/q)$, and $|A(G)|/|G| \ge 1/(q-1)$.
\end{lemma}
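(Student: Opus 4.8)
The plan is to describe the structure of $G$, reduce to a group lying in $\GammaL_1$ of $\F_q$ or of $\F_{q^2}$ by passing to the normalizer of a cyclic subgroup of $L$, and then quote \Cref{prop:GamL_1}.

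\textbf{Setup and counting in $H$.} Put $H:=G\cap LZ$. Since $LZ/L\cong\F_q^\times/\{\pm I\}$ is cyclic, $H=L\circ Z_0$ with $Z_0:=H\cap Z\le\F_q^\times$ cyclic and $-I\in Z_0$; moreover $G/H$ is cyclic, of order $e$ say, and $|G|=60\,|Z_0|\,e$. The trivial bound $\alpha(G)\ge 1/|G|$ already gives the first assertion when $e=1$ (then $|G|\le 60(q-1)$), so the content is in the presence of field automorphisms. Writing $\mu_k$ for the $k$-th roots of unity in $\F_q^\times$, an element $gw\in H$ ($g\in L$, $w\in Z_0$) has a nonzero fixed vector iff $w^{-1}$ is an eigenvalue of $g$; running through the conjugacy classes of $\SL_2(5)$ gives that the number $N_0$ of such elements of $H$ is $1+40\cdot\mathbf 1(\mu_3\subseteq Z_0)+30\cdot\mathbf 1(\mu_4\subseteq Z_0)+48\cdot\mathbf 1(\mu_5\subseteq Z_0)$ (plus further positive terms if $p\in\{3,5\}$). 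In particular $N_0=1$ exactly when $H$ is semiregular on $V\sm\{0\}$, and $N_0\ge 31$ otherwise; and each nontrivial $\F_q$-linear element of $H$ with a nonzero fixed vector fixes exactly $q$ vectors.

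\textbf{Reduction to $\GammaL_1$, split case.} Suppose $q-1$ is divisible by $5$, $3$, or $4$, and accordingly pick a cyclic $C\le L$ of order $5$, $3$, or $4$; then a generator $g$ of $C$ has all its eigenvalues in $\F_q$, so $T:=\C_{\GL_2(q)}(C)$ is a \emph{split} maximal torus and $V=V_1\oplus V_2$ into $g$-eigenlines. As $L\trianglelefteq G$ and all such $C$ are $L$-conjugate, $N:=\N_G(C)$ has index $6$, $10$, or $15$ in $G$, $N$ normalizes $T$ and permutes $\{V_1,V_2\}$, and the stabilizer $N_1\le N$ of $(V_1,V_2)$ has index $\le 2$ in $N$. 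Its pointwise stabilizer $K$ of $V_1$ contains only $\F_q$-linear maps (an $\F_{p^j}$-semilinear one fixing $V_1$ pointwise would force $c^{p^j}=c$ on all of $\F_q$, i.e.\ $f\mid j$), and a determinant computation inside $\SL_2(5)\circ Z_0$ shows $K$ is cyclic of order dividing $5$, $3$, or $2$ respectively. Hence $N_1/K$ embeds into $\GammaL_1(q)$ acting on $V_1\cong\F_q$, and — the key bookkeeping — $|K|\cdot[G:N_1]\le 60$ in each of the three cases ($5\cdot 12$, $3\cdot 20$, $2\cdot 30$). Applying \Cref{prop:GamL_1}: if $N_1/K$ is not semiregular on $\F_q\sm\{0\}$, its proportion of elements fixing a nonzero vector is $\ge h(q)\gg q^{-1/2}$, and since fixing a nonzero vector of $V_1$ (and $K$ fixes $V_1$ pointwise) implies fixing a nonzero vector of $V$, dividing by $[G:N_1]\le 30$ yields $\alpha(G)\gg q^{-1/2}$, far exceeding everything needed; the $\delta$- and $|A(G)|$-bounds follow from \Cref{lemma_delta_alpha,l:subgroup_eigenvalue1}. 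If instead $N_1/K$ is semiregular, then $|N_1/K|\mid q-1$, so $|G|\le[G:N_1]\,|K|\,(q-1)\le 60(q-1)$, whence $\alpha(G)\ge N_0/|G|\ge 1/(60(q-1))$, and $\ge 31/(60(q-1))$ whenever $H$ is not semiregular. If $H$ is semiregular but $G$ is not, the missing fixed-vector-elements lie in a semilinear coset; a single one has $\ge|Z_0|/(2(p^{f/t}-1))$ conjugates under $H$ for some $t\ge 2$, which is $\ge 30$ unless $|Z_0|\ll q^{1/2}$, in which case $|G|\ll q^{1/2}\log q$ and $1/|G|\ge 31/(60(q-1))$ for $q$ large; and $A(G)$, containing all such elements, contains $L$ (order-$4$ elements generate $\SL_2(5)$) once $H$ fails to be semiregular, so $|A(G)|/|G|\ge 120/(60(q-1))\ge 1/(q-1)$. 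The $\delta$-statements then come from \Cref{derangements_eta} using that the linear such elements fix exactly $q$ vectors.

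\textbf{The remaining case, and the main obstacle.} It remains to treat $q\equiv 59\pmod{60}$, where $15\nmid q-1$ and $4\nmid q-1$, so $L$ has no split torus and, by the count above, $H$ is automatically semiregular. Here the nonsplit reduction gives $N:=\N_G(C)\le\GammaL_1(q^2)$ of index $6$ (the normalizer of a nonsplit torus, i.e.\ a Singer cycle, in $\GammaL_2(q)$ is $\GammaL_1(q^2)$), with $N\cap\GL_2(q)=Q_{20}\circ Z_0$ already semiregular on $V\sm\{0\}$ (the norm-one obstruction collapses since $\mu_4=\mu_2$). Now semiregularity of $N$ only yields $|N|\mid q^2-1$, which is too weak, and the blanket bound $h(q^2)$ from \Cref{prop:GamL_1} in the non-semiregular subcase is not strong enough to beat $31/(60(q-1))$. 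The difficulty I expect to be the bulk of the work is: when $N$ is semiregular but $G$ is not, the fixed-vector-elements all lie outside $\bigcup_i\N_G(C^{(i)})$ over the six Sylow $5$-subgroups — so their images under the natural map $G\to S_6$ are fixed-point-free permutations — and one must both bound $|G|$ (via \Cref{l:agl_1_coset} applied to the field part of $N$ inside $\GammaL_1(q^2)$, together with $|Z_0|\mid q-1$) and produce enough such elements; and in the non-semiregular subcase one must replace $h(q^2)$ by the sharper estimates from the proof of \Cref{prop:GamL_1}, which bound $\alpha$ below by $\gg q^{-2/3}$ outside one extremal configuration that must then be excluded by hand. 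Once $q\equiv 59\pmod{60}$ is disposed of in this way, all cases are covered.
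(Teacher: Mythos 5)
Your reduction via normalizers of cyclic subgroups of $L$ is a genuinely different strategy from the paper's, and the split-case bookkeeping (the formula for $N_0$, the bound $|K|\cdot[G:N_1]\le 60$, and the application of \Cref{prop:GamL_1} to $N_1/K$) is plausible, though the claim that a single semilinear fixed-vector element has $\ge |Z_0|/(2(p^{f/t}-1))$ conjugates under $H$ is asserted without proof and would need care. However, the proposal has a genuine gap: you explicitly leave the case $q\equiv 59\pmod{60}$ open, ending with a description of obstacles rather than a proof. That case is not peripheral — it is exactly the situation where $L$ has no split torus and $H$ is automatically semiregular, and your nonsplit reduction to $\GammaL_1(q^2)$ does not close it, as you yourself note.

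The paper avoids the split/nonsplit dichotomy entirely. Setting $E:=G\cap Z\langle\tau\rangle$ and $H:=G\cap\GL_2(q)$, the paper first disposes of the case $E$ not semiregular by \Cref{prop:GamL_1}, and then handles the hardest subcase ($E$ semiregular, $H$ semiregular, $G$ not semiregular) with a one-line conjugacy argument: a semilinear element $g=x\sigma$ fixing a nonzero vector must have $|g|=|\sigma|$ (else some nontrivial power lies in $H$ and fixes a vector), and then \cite[Theorem 4.9.1]{gorenstein} conjugates $g$ inside $\GL_2(q)$ into $Z\langle\tau\rangle$; replacing $G$ by this conjugate makes $E$ non-semiregular, reducing to the already-settled case. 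This uniformly handles your troublesome $q\equiv 59\pmod{60}$. The remaining work in the paper is the case $E$ semiregular, $H$ not semiregular, where it counts linear elements with eigenvalue $1$ as you do, with a separate argument when $3\mid q$. To repair your proposal you would need either to carry out the nonsplit analysis you sketch — which you predict will require new extremal information beyond \Cref{prop:GamL_1} — or, more efficiently, to import the Gorenstein-style conjugacy reduction, which replaces your entire nonsplit branch.
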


\begin{proof}
In this proof, whenever we write ``semiregular'', we mean ``semiregular on $V\sm\{0\}$''. Set $E:=G\cap Z \gen\tau$, and note that $|G:E|\le 60$.

Assume first that $G$ is semiregular. Then $E$ is semiregular and so $|E|\le q-1$, from which
\[
\alpha(G)\ge \frac{\alpha(E)}{60}\ge \frac{1}{60(q-1)}.
\]
Note also that the set of derangements of $G$ is $V\sm\{0\}$, and so
\[
\delta(V\rtimes G)=\frac{q^2-1}{|G|q^2}\ge \frac{q^2-1}{60(q-1)q^2} = f(q^2),
\]
as wanted.

Assume now that $G$ is not semiregular; we will show that $\alpha(G)\ge 31/(60(q-1))$, and we will deduce the bounds to $\delta(V\rtimes G)$ and $|A(G)|/|G|$ at the end. If $E$ is not semiregular, by \Cref{prop:GamL_1} we have $\alpha(G)\gg \alpha(E)\ge h(q) \gg 1/q^{1/2}$,
and the proof is complete (by \Cref{lemma_delta_alpha}). Assume then $E$ is semiregular, so $|E|\le q-1$. Assume first that $H:=G\cap \GL_2(q)$ is semiregular, and let $1\neq g=x\sigma\in G$ fix a nonzero vector, where $x\in \GL_2(q)$ and $\sigma$ is a power of $\tau$. Since $H$ is semiregular, $|g|=|\sigma|$, and so by \cite[Theorem 4.9.1]{gorenstein} there exists $y\in \GL_2(q)$ such that $g^y=z\sigma$, where $z\in Z$. Replacing $G$ by $G^y$, we are reduced to the case where $E$ is not semiregular, and so this case is done.

Assume finally that $E$ is semiregular and $H$ is not semiregular.  Note that in $L$ there are $30$ elements of order $4$, $20$ elements of order $3$, $20$ elements of order $6$, $24$ elements of order $5$, $24$ elements of order $10$, and $2$ central elements. 

Let $H=LZ'$ where $Z'\le Z$. If $3\nmid q$, then a nontrivial element of $H$ with eigenvalue $1$ is of the form $gz$, where $g\in L\sm Z$, $1\neq z\in Z'$, and $g$ has eigenvalue $z^{-1}$. It is easy to see that $\alpha(G)\ge 31/(60(q-1))$ (attained when $q\equiv 1$ mod $4$, $q\equiv -1$ mod $3$, $q\equiv -1$ mod $5$, and $Z'=Z$).
Assume finally that $3\mid q$, so elements of order $3$ have eigenvalue $1$. Since $q\equiv \pm 1 \pmod 5$, we also have  $q\equiv 1 \pmod 8$.
In particular, we may assume that $4\nmid |Z'|$, for otherwise we have $\alpha(G)\ge 51/(60(q-1))$. We can also assume that $|G|=60(q-1)$ and $|E|=q-1$, otherwise $\alpha(G)\ge 21/(30(q-1))$. Now, take $g\in L$ of order $4$ such that $\tau$ normalizes $\gen g$; up to conjugation we may take it that $J:=\gen{E,g}$ acts diagonally on $V$.  
Let $W$ be the space generated by the first basis vector, and let $\overline J$ be the group induced by $J$ on $W$; since $4\nmid |Z'|$, we see that $J\cong \overline J$.
If $\overline J$ is not semiregular on $W\sm\{0\}$ then by \Cref{prop:GamL_1} $\alpha(J)\ge \alpha(\overline J, W)\gg 1/q^{1/2}$ and so $\alpha(G)\gg \alpha(J)\gg 1/q^{1/2}$ and we are done.
 Therefore we assume that $\overline J$ is semiregular  on $W\sm\{0\}$, from which $|J|=|\overline J|\le q-1$ and so $|G|\le 30(q-1)$, contradicting our assumption. This proves the lower bound to $\alpha(G)$. For the lower bound to $\delta(V\rtimes G)$, note that every bound to $\alpha(G)$ in the proof is given by counting elements of $H=G\cap \GL_2(q)$ (or otherwise $\alpha(G)\gg 1/q^{1/2}$). In particular, every such nontrivial element fixes $q$ vectors, and so the bound follows easily from \Cref{derangements_eta}. Finally, for what concerns $A(G)$, we claim that $L\le A(H)$, and for this we may replace $H$ by its cover $\wt H=L\times Z'$.
 Then, $A(\wt H)$ is a normal subgroup of $\wt H$ projecting to a normal subgroup of $L$ not contained in $\Z(L)$, from which $L\le A(\wt H)$,
 as desired. Then $|A(G)|/|G|\ge |L|/|G|\ge 1/(q-1)$ and we are done.
\end{proof}

\begin{remark}
	\label{rem:SL_2(5)_frobenius}
	Assume $q\equiv -1\pmod{60}$ and let $G=ZL$ where $Z=\F_q^\times$. Then $G$ acts semiregularly on $V\sm\{0\}$ and so $\alpha(G)=1/(60(q-1))$ and $\delta(V\rtimes G)=(q+1)/(60q^2)=f(q^2)$. In particular, the bounds in \Cref{t:affine}(2) (and \Cref{t:main_2}(2)) are sharp.
\end{remark}

\subsection{$Q_8$} Let $q$ be odd and $Q_8\le G\le \N_{\GamL_2(q)}(Q_8)=KZ\rtimes \gen\tau$,
where $K$ is a double cover of $S_4$, $Z=\F_q^\times$, $q=p^f$, $p$ prime, and $\tau$ is the $p$-th power map. We set $\hat{S_4}:=\GL_2(3)$ and we denote by $\wt{S_4}$ the other double cover of $S_4$, so $K\in \{\hat{S_4}, \wt{S_4}\}$.

\begin{lemma}
\label{l:q8}
    For $q$ sufficiently large we have $\alpha(G) \ge 1/(24(q-1))$. If moreover $G$ is not semiregular on $V\sm\{0\}$, then $\alpha(G)\ge 13/(24(q-1))$, $\delta(V\rtimes G)\ge 13/(24(q-1))\cdot (1-1/q)$, and $|A(G)|/|G|\ge 1/(q-1)$.
\end{lemma}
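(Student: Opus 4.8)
The plan is to mirror closely the structure of the proof of \Cref{l:sl2(5)}, replacing $L=\SL_2(5)$ and the bound $1/60$ by $K$ (a double cover of $S_4$) and the bound $1/24$. Set $E:=G\cap Z\gen\tau$ and note $|G:E|\le 24=|S_4|$, since $G/E$ embeds into $K/(\text{scalars})\cong S_4$. If $G$ is semiregular on $V\sm\{0\}$, then $E$ is semiregular, so $|E|\le q-1$, giving $\alpha(G)\ge \alpha(E)/24\ge 1/(24(q-1))$; the set of derangements is then $V\sm\{0\}$ and the bound for $\delta(V\rtimes G)$ follows exactly as before (though note the statement only claims the $\delta$ and $|A(G)|/|G|$ bounds in the non-semiregular case). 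So from now on assume $G$ is not semiregular.

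Next I would run through the same case division on $E$ and $H:=G\cap\GL_2(q)$. If $E$ is not semiregular, apply \Cref{prop:GamL_1} to get $\alpha(G)\gg\alpha(E)\ge h(q)\gg q^{-1/2}$, which for $q$ large beats $13/(24(q-1))$, and the $\delta$-bound follows from \Cref{lemma_delta_alpha}. If $E$ is semiregular but $H$ is not, and some $1\ne g=x\sigma\in G$ (with $\sigma$ a nontrivial power of $\tau$) fixes a nonzero vector: since $H$ semiregular forces $|g|=|\sigma|$, by \cite[Theorem 4.9.1]{gorenstein} $g$ is $\GL_2(q)$-conjugate to $z\sigma$ with $z\in Z$, so after conjugating $G$ we are back to the case $E$ not semiregular. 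This reduces everything to the case $E$ semiregular (so $|E|\le q-1$) and $H=KZ'$ with $Z'\le Z$ not semiregular, where all fixed-point elements to be counted lie in $H$ and each nontrivial such element fixes exactly $q$ vectors.

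In that final case I would do the combinatorics of element orders in $K\in\{\hat S_4=\GL_2(3),\wt S_4\}$, exactly as the $\SL_2(5)$ argument counts $g z$ with $g\in K\sm Z$, $z\in Z'\sm\{1\}$, and $g$ having eigenvalue $z^{-1}$. In $\hat S_4=\GL_2(3)$ the relevant class sizes (elements of orders $2,3,4,6,8$, and central involutions) are standard; one computes $\alpha(G)$ as a sum of terms $(\text{number of }g\text{ with a given eigenvalue-pair})/(|G|)$ plus $1/|G|$ for the identity, and checks the worst case gives $\ge 13/(24(q-1))$. As in the $\SL_2(5)$ proof, the characteristic-$p$-divides-an-element-order subtlety must be handled: if $3\mid q$ then elements of order $3$ automatically have eigenvalue $1$, and if $2\mid q$ the group is a cover of $S_4$ in characteristic $2$ so one replaces this whole branch by the defining-characteristic count (or simply $q$ even is excluded since $Q_8\le\GL_2(q)$ needs $q$ odd — indeed the hypothesis says $q$ odd, so only the $3\mid q$ possibility arises); for $3\mid q$, reduce via a suitably-chosen diagonalizable cyclic subgroup $J=\gen{E,g}$ with $g\in K$ of order $8$ and apply \Cref{prop:GamL_1} on a $1$-dimensional constituent to contradict semiregularity unless $|G|$ is small enough to force the bound directly.

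For the last two assertions: once the nontrivial fixed-point elements counted all lie in $H$ and each fixes $q$ vectors, the formula from \Cref{derangements_eta} reorganizes (as in \Cref{l:sl2(5)}) to $\delta(V\rtimes G)\ge \alpha(G)\cdot(1-1/q)\ge \tfrac{13}{24(q-1)}(1-1/q)$. For $|A(G)|/|G|$, pass to the cover $\wt H=K\times Z'$ and note $A(\wt H)$ is normal in $\wt H$ and projects onto a normal subgroup of $K$ not contained in $\Z(K)$ (the counted fixed-point elements, e.g.\ of order $3$ or $4$, generate such a subgroup), whence $Q_8\le[K,K]\le A(\wt H)$ — in fact the derived subgroup of $K$, of index $2$ in $K$ and order $24$, is contained in $A(\wt H)$ — so $|A(G)|/|G|\ge |Q_8|/|G|$ is not quite enough and one instead uses $|A(G)|\ge |K'||Z'|\cdot(\text{factor})$; concretely, as in \Cref{l:sl2(5)}, $A(G)$ contains a subgroup of index at most $q-1$ in $G$, giving $|A(G)|/|G|\ge 1/(q-1)$. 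The main obstacle is the bookkeeping in this last case: getting the constant $13/24$ exactly right requires carefully identifying, across the two possible covers $K$ and the congruence conditions on $q$ mod $3$, $4$, $8$, which eigenvalue pairs occur and with what multiplicity, and pinning down the worst case — this is the analogue of the delicate $\SL_2(5)$ count and is where an error would most plausibly creep in.
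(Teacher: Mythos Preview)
Your initial reduction (defining $E$, handling the semiregular case, and reducing via \cite[Theorem 4.9.1]{gorenstein} to the situation where $E$ is semiregular and $H=G\cap\GL_2(q)$ is not) is correct and matches the paper. The gap is in the final case: you write ``$H=KZ'$ with $Z'\le Z$'' and then propose to count elements $gz$ with $g\in K\setminus Z$, $z\in Z'$. But the hypothesis is only $Q_8\le G$, not $K\le G$; in the $\SL_2(5)$ lemma one genuinely has $L\le H\le LZ$ so $H=LZ'$, whereas here $N:=G\cap K$ can have order $8$, $16$, $24$, or $48$, and $H$ may contain elements of the form $gz$ with $g\in K\setminus N$ and $z\in Z\setminus Z'$. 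So the combinatorics is not simply that of $K\times Z'$, and your proposed counting does not see the correct group.

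The paper handles this by writing $G=\langle N, g_1z_1, g_2z_2\sigma\rangle$ and introducing a parameter $\beta$ (depending on $|N|$, which cover $K$ is, and congruences on $q$ and $|z_1|$). It then proves, via several applications of \Cref{prop:GamL_1} to an auxiliary subgroup $J$ acting on a $1$-dimensional summand, that either $\alpha(G)\gg q^{-1/2}$ or $|G|\le\beta(q-1)/2$; only then does a coset-by-coset count (formalized through functions $F_C(t)$, $f_C(t)$) yield the constant $13/48$. Your proposal misses this entire layer: the bound $|G|\le 24(q-1)$ that you implicitly use (from $|E|\le q-1$ and $|G:E|\le 24$) is too weak by a factor of $2$ to reach $13/(24(q-1))$, and the element count must be done relative to cosets of $N$ in $K$, not over all of $K$. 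Your $|A(G)|/|G|$ argument also relies on the cover $K\times Z'$, which again is not the right object when $N\neq K$.
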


\begin{proof}
As in the proof of \Cref{l:sl2(5)}, we set $E:=G\cap Z\gen\tau$ and when we write ``semiregular'' we mean ``semiregular on $V\sm\{0\}$''. 

Since $|G:E|\le 24$, if $G$ is semiregular we have
\[
\alpha(G)\ge \frac{\alpha(E)}{24}\ge \frac{1}{24(q-1)}.
\]
Assume then $G$ is not semiregular; we want to show $\alpha(G)\ge 13/(24(q-1))$, and we will deduce the bounds to $\delta(V\rtimes G)$ and $|A(G)|/|G|$ at the end of the proof. Setting $H:=G\cap \GL_2(q)$, exactly as in the proof of \Cref{l:sl2(5)}, we reduce to the case where $E$ is semiregular and $H$ is not semiregular. We assume (as we may) that if $p\equiv \pm 3 \pmod 8$ then $K=\hat{S_4}$.

Write $G=\gen{N,g_1z_1,g_2z_2\sigma}$, where $N:=G\cap K$, $g_1,g_2\in K$, $z_1,z_2\in Z$, $\sigma$ is a power of $\tau$, and $H=\gen{N,g_1z_1}$.
If $|N|=16$, or $48$, or $24$ with $K= \hat{S_4}$, define $\beta:=|N|$. If $|N|=8$,  define $\beta:=24$. Assume finally $|N|=24$ with $K=\wt{S_4}$. If $q\equiv -1\pmod 8$, or $q\equiv 1\pmod 8$ and $|z_1|_2=8$ and $g_1\not\in N$, define $\beta:=48$; in the other cases, define $\beta:=24$.

We will now show that either $\alpha(G)\gg 1/q^{1/2}$ (in which case the proof is complete) or $|G|\le \beta(q-1)/2$. Note that this is immediate if $\beta=48$, so for now we may assume $\beta<48$, and in particular $|N|<48$.

Let $J:=\gen{-1, g_1z_1,g_2z_2\sigma}$. If we show that $|J|\le q-1$, then $|G|\le |N|(q-1)/2$, which is enough in all cases since $\beta\ge |N|$. We will accomplish this stronger bound in some cases.

If we may take $g_1=g_2=1$ then $J\le E$ has order at most $q-1$ as desired, so assume this is not the case. Since $g_1$ and $g_2$ normalize $N$,
if $|N|=16$ then we may take $g_1=g_2=1$ and so these cases are excluded.
If $|N|=24$, then  we may take $g_1$ or $g_2$ is equal to $1$, and the other, call it $g$, is of $2$-power order. We may assume $g\not\in N$ as explained in this paragraph.

Since we are assuming $\beta<48$, we have either $K=\hat{S_4}$ or $q\equiv 1\pmod 8$. In particular, we may take $g$ diagonal, of order $2$ if $K=\hat{S_4}$, and of order $8$ if $K=\wt{S_4}$. Let $W$ be the space generated by the first basis vector, let $\overline J$ be the group induced by $J$ on $W$, and let $T$ be the kernel of $J\to \overline J$. Let also $Z':=\gen{-1,g_1z_1}$.
Assume first $g=g_2$. 
We have $|G:J| = |H:H\cap J| = 12/|H\cap J : Z'|$. 
Moreover, the natural map $T\to (H\cap J)/Z'$ is injective.
Now, if $\overline J$ does not act semiregularly on $W\sm\{0\}$, then by \Cref{prop:GamL_1} $\alpha(G)\gg \alpha(J)\ge \alpha(\overline J,W)/|T|\gg 1/q^{1/2}$ and the proof is concluded. Assume then $\overline J$ acts semiregularly; then $|\overline J|\le q-1$ and so $|J|=|\overline J||T|\le (q-1)|H\cap J : Z'|$, from which $|G|\le 12(q-1)$, as desired.  
Assume now $g=g_1$ and $K=\wt{S_4}$. Then $p\neq 3$ and since $\beta<48$, we have $q\equiv 1\pmod 8$ and $|z_1|_2\neq 8=|g|$.
We have $H\cap J \le \gen{-1,g_1z_1, z_1^2}$,
and the assumption $|z_1|_2\neq 8=|g|$ easily implies that the natural map $T \to (H\cap J)/Z'$ is injective.
We then conclude exactly as in the previous case.
Assume now $g=g_1$ and  $K=\hat{S_4}$, so $|g|=2$. Assume $g$ centralizes the space $W$ generated by the first basis vector. Then we see that $J\cong \overline J$,
and we conclude as in the  case $g=g_2$, dividing the case where $\overline J$ is semiregular on $W\sm \{0\}$ or not.

Assume finally $|N|=8$. If $g_1\in N$ or $g_2\in N$, then $|G|\le 12(q-1)$ as desired.
 If $g_1$ and $g_2$ are not in $N$, then we may take $|g_1|=3$ 
 and $g_2$ of $2$-power order. It follows that $K=\hat{S_4}$ or $q\equiv 1\pmod 8$. (Indeed, if $K=\wt{S_4}$ and $q\equiv -1\pmod 8$, we have $q=p^f$ with $f$ odd,
 in which case by choosing $g_2\in \GL_2(p)$, we have that $g_2z_2'$ is a power of $g_2z_2\sigma$ for some $z_2'\in Z$, which is impossible.)
In particular, we can pass to a subgroup $B$ of index at most $3$; the same argument as in the case $g=g_2$ in the previous paragraph then gives $|B| \le 4(q-1)$ and so $|G|\le 12(q-1)$, as desired.
Therefore we may assume $|G|\le \beta(q-1)/2$ in all cases, as claimed.

Recall we are assuming $H$ is not semiregular; let $1\neq y\in H$ with eigenvalue $1$. Assume $y=nxz$, where $n\in N$, $x\in K$, $z\in Z$, and $xz$ a power of $g_1z_1$ not belonging to $N\sm\{1\}$. 
Let $c:=nx$, so $y=cz$, and note that $|c|=2,3,4,6$ or $8$. If $|c|=2$ then $x=1$ and $y=n$.
If $|c|= 6$ or $8$ then we replace $y$ by $y^2$ and assume $|c|=3$ or $4$. Now note that, in all cases with $|c|=2,3$ or $4$, if $c'\in Nx$ is such that $|c'|=|c|$, then $c'z$ has eigenvalue $1$.
Moreover, if $|c|=3$ and $xz\neq 1$, then $c^2$ has eigenvalue $1$, and the number of elements of order $3$ in $Nxz$ is equal to the number of elements of order $3$ in $N(xz)^2$.
There is one case where we need to improve slightly the counting.
If $K=\wt{S_4}$, $|N|=24$, $|z_1|_2=8$ and $g_1\notin N$, then we can  take $|g_1|=8$, and up to replacing $z_1$ by $-z_1$, a power $g_1z_1$ of the form $g_1z_1'$ has eigenvalue $1$.
Furthermore, for every $\GL_2(q)$-conjugate $a$ of $g_1$ in $Ng_1$ (there are $6$ such conjugates),
$az'_1$ has eigenvalue $1$, and finally we find other $6$ nontrivial elements with eigenvalue $1$ in $N(g_1z'_1)^2 = N{z_1'}^2$. We counted $12$ elements, which is also the number of elements of order $4$ in $K\sm N$.

Let now $C$ be a coset of $N$ in $K$ normalizing $N$ and let $t\in \{3,4\}$. If $N=Q_8$, $C\subseteq P\sm N$ where $P\in \mathrm{Syl}_3(K)$, and $t=3$, then we let $F_C(t)=8$ (namely, the number of elements of $P$ of order $3$). 
If $p\neq 3$, $|N|=48$ or $|N|=24$, and $t=3$, then we let $F_C(t)=16$ (namely, twice the number of elements of $K$ of order $3$).  
In all other cases, we let $F_C(t)$ be the number of elements that are either in $C$ and have order $t$, or are in $N\sm \Z(N)$ and have order $2$. We finally let $f_C(t):=(F_C(t)+1)/\beta$.

It follows from the above discussion that there exists a coset $C$ of $N$ in $K$ normalizing $N$
and $t\in \{3,4\}$, such that if $|N|=24$, $K=\wt{S_4}$, $|z_1|_2=8$ and $g_1\not\in N$, then $C=K\sm N$ and $t=4$;
if $|N|=24$ and $q\equiv -1\pmod 8$ (so $p\neq 3$), then $t=3$; 
and such that moreover $f_C(t)>1/\beta$ and 
\[
\alpha(G)\ge \frac{F_C(t)}{|G|} + \frac{1}{|G|} \ge \frac{2f_C(t)}{(q-1)}.
\]
(In the second inequality we used $|G|\le \beta(q-1)/2$.)
It is straightforward to check that if $f_C(t)>1/\beta$ then $f_C(t)\ge 13/48$ (attained for example by $K=N=\hat{S_4}$, $q\equiv 3\pmod 8$ and $q\equiv 2\pmod 3$, and $t=3$ or $4$). 

This gives $\alpha(G)\ge 13/(24(q-1))$ as required. The lower bound to $\delta(V\rtimes G)$ follows as at the end of the proof of \Cref{l:sl2(5)},
and the lower bound to $|A(G)|/|G|$ follows by inspection of the proof.
\end{proof}

\subsection{$\SL_2(q_0)$}

We begin by recording  a simple estimate for the proportion of elements in a coset of $\SL_2(q)$ having a given eigenvalue in $\F_q$. 

\begin{lemma}
\label{l:simple_estimate}
    Let $g\in \GL_2(q)$ and let $\lambda \in \F_q^\times$. The proportion of elements of  $\SL_2(q)g$ having eigenvalue $\lambda$ is at least $1/q$.
\end{lemma}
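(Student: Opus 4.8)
The plan is to reduce the count to a question about the characteristic polynomial and then use the orbit–counting structure of $\SL_2(q)$ acting by conjugation. First I would observe that an element $h \in \GL_2(q)$ has eigenvalue $\lambda \in \F_q^\times$ if and only if $\det(h - \lambda I) = 0$, equivalently $\lambda^2 - \mathrm{tr}(h)\lambda + \det(h) = 0$, i.e. $\mathrm{tr}(h) = \lambda + \det(h)/\lambda$. Since all elements of the coset $\SL_2(q)g$ have the same determinant, namely $\det(g)$, the condition ``$h$ has eigenvalue $\lambda$'' becomes the single linear condition $\mathrm{tr}(h) = \lambda + \det(g)\lambda^{-1} =: \mu$ on $h \in \SL_2(q)g$. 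So the task is to show that the proportion of elements of $\SL_2(q)g$ with trace exactly $\mu$ is at least $1/q$.

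Next I would translate this into a statement about $\SL_2(q)$ itself by writing $h = sg$ with $s \in \SL_2(q)$; the map $s \mapsto sg$ is a bijection $\SL_2(q) \to \SL_2(q)g$, so I need: the number of $s \in \SL_2(q)$ with $\mathrm{tr}(sg) = \mu$ is at least $|\SL_2(q)|/q = q(q+1)(q-1)/q = (q^2-1)$. The cleanest route is a character-sum / counting argument: the number of $s \in \SL_2(q)$ with $\mathrm{tr}(sg)$ equal to a fixed value $c$ is, as $c$ ranges over $\F_q$, a function whose values I can control. Concretely, $\sum_{c \in \F_q} \#\{s : \mathrm{tr}(sg) = c\} = |\SL_2(q)| = q(q^2-1)$, so on average each fibre has size $q^2-1$; the point is that the fibre over $\mu$ is at least this average. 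One way to see this: the map $\SL_2(q) \to \F_q$, $s \mapsto \mathrm{tr}(sg)$, is (for $g$ invertible) a ``generic'' linear-type functional, and its fibres are translates of the affine subvariety $\{x \in M_2(q) : \det(x) = \det(g),\ \mathrm{tr}(xg^{-1}\cdot g)=c\}$... more simply, I would just compute $\#\{s \in \SL_2(q): \mathrm{tr}(sg)=c\}$ directly by parametrizing $s = \begin{pmatrix} a & b \\ c' & d\end{pmatrix}$ with $ad - bc' = 1$, writing out $\mathrm{tr}(sg)$ as an explicit bilinear expression in the entries, and counting solutions of the resulting system of two equations (one quadratic, one linear) in four variables over $\F_q$; this is an elementary but slightly tedious computation showing the count is $q^2 - 1$ or $q^2 + q - 1$ depending on whether the linear form degenerates, in all cases $\ge q^2 - 1$.

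The main obstacle I anticipate is handling the degenerate cases in that direct count cleanly — when the linear constraint coming from the trace condition becomes proportional to, or interacts with, the determinant-one constraint, the fibre sizes jump around, and one has to be careful that the *minimum* fibre size (over all $c$, in particular over $c = \mu$) is still $\ge q^2 - 1$. A slicker alternative that avoids casework: note $\mathrm{tr}(sg) = \mathrm{tr}(g s)$ and that conjugation by $\SL_2(q)$ permutes the coset, so the fibre sizes depend only on the conjugacy class of $g$ (more precisely on $\det g$ and on whether $g$ is scalar); then treat the scalar case $g = \nu I$ by hand (where $\mathrm{tr}(sg) = \nu\,\mathrm{tr}(s)$ and one counts $s \in \SL_2(q)$ of given trace, a classical count: $q^2 + q$ for trace $\pm 2$ resp., $q^2 - q$ for traces giving irreducible char. poly, $q^2 + q$... in all cases $\ge q^2 - q \ge (q^2-1) \cdot \tfrac{q}{q+1}$, hmm this needs the slightly weaker bound — actually the classical counts give $\ge q(q-1) = q^2 - q$, and $q^2 - q \ge |\SL_2(q)|/q$ fails), and the non-scalar case by using that $g$ is then conjugate (in $\GL_2$) to a companion matrix, reducing to an even more explicit two-variable count. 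I would present whichever of these is shortest, most likely the direct entrywise computation with the degenerate linear form isolated as a single sub-case, and verify the bound $1/q$ holds with room to spare in the generic case and with equality possible in a boundary case.
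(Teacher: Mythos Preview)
Your reduction to the trace condition $\mathrm{tr}(h) = \mu := \lambda + \det(g)\lambda^{-1}$ is correct and is essentially the paper's approach, but you then lose the thread by treating $\mu$ as an \emph{arbitrary} element of $\F_q$ and worrying about trace fibres of size $q^2 - q$. That worry is misplaced: the characteristic polynomial of any $h$ in the coset with $\mathrm{tr}(h)=\mu$ is $x^2 - \mu x + \det(g) = (x-\lambda)(x - \det(g)\lambda^{-1})$, which \emph{always splits over $\F_q$} by construction. So the ``irreducible characteristic polynomial'' case (fibre size $q(q-1)$) simply never arises for your specific $\mu$, and the computation you abandoned in the scalar case actually goes through: with $g=\nu I$ and $\rho=\lambda/\nu$, the required trace of $s$ is $\rho+\rho^{-1}$, whose discriminant $(\rho-\rho^{-1})^2$ is a square.

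Once this is observed, the entire argument collapses to the paper's two-line proof: if $\lambda^2 \neq \det(g)$ then the elements of $\SL_2(q)g$ with eigenvalue $\lambda$ are exactly those $\GL_2(q)$-conjugate to $\mathrm{diag}(\lambda,\det(g)\lambda^{-1})$, a single class of size $q(q+1)$, giving proportion $1/(q-1)$; if $\lambda^2 = \det(g)$ then the $\lambda$-potent elements (those conjugate to $\lambda\begin{pmatrix}1&1\\0&1\end{pmatrix}$) already give proportion $(q^2-1)/|\SL_2(q)| = 1/q$. No entrywise parametrization, no character sums, no casework on whether $g$ is scalar.
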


\begin{proof}
    If $\lambda^2\neq \det(g)$, the proportion of (semisimple) elements of $\SL_2(q)g$ having eigenvalue $\lambda$ is $1/(q-1)$. If $\lambda^2= \det(g)$, the proportion of $\lambda$-potent elements is $1/q$.
\end{proof}

We turn then to the general case. Let $q$ be a power of $q_0$, with $q_0\ge 4$,  and $\SL_2(q_0) \le G \le \N_{\GamL_2(q)}(\SL_2(q_0))=\GL_2(q_0)Z\rtimes \gen\tau$,
where $Z=\F_q^\times$, $q=p^f$, $p$ prime, and $\tau$ is the $p$-th power map. This is the only case where the bound for $\delta(V\rtimes G)$ will require somewhat more work than the bound for $\alpha(G)$.

\begin{lemma}
\label{l:sl2(q_0)}
    We have $\alpha(G)> 3/(4q)$, $\delta(V\rtimes G)> 9/(16q)$, and $|A(G)|/|G|\ge 1/(q-1)$.
\end{lemma}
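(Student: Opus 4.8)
The plan is to use the identification $\N_{\GamL_2(q)}(\SL_2(q_0))=\SL_2(q_0)\cdot R$, where $R:=\GL_2(q_0)\F_q^\times\rtimes\gen\tau$ and $R/\SL_2(q_0)$ is abelian‑by‑cyclic of order dividing $(q-1)f$; up to an index $\le 2$ discrepancy (from determinants modulo squares, relevant only when $q_0$ is odd) this quotient is the ``scalars and semilinearity'' group $\GamL_1(q)/\{\pm I\}$. So write $G=\SL_2(q_0)\,\Omega_0$ with $\Omega_0\le R$, $\Omega_0\cap\SL_2(q_0)=\{\pm I\}$, and partition $G$ into the $N:=|G:\SL_2(q_0)|$ cosets $\SL_2(q_0)\gamma$, $\gamma\in\Omega_0$. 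The core is a dichotomy: each such coset either contains no element with eigenvalue $1$ on $V=\F_q^2$, or the proportion of elements of the coset with eigenvalue $1$ is at least $1/q_0\ (\ge 1/q)$. To prove it, write $\gamma=\mu\phi$ (scalar $\mu\in\F_q^\times$ times $\phi\in\Gal(\F_q/\F_p)$, absorbing the bounded discrepancy); for $g=s\gamma$ with $s\in\SL_2(q_0)$, acting on $V$ by $v\mapsto\mu s v^\phi$, the fixed space is an $\F_{q'}$‑subspace of $V$ ($\F_{q'}$ the fixed field of $\phi$), and — exactly as for $\GamL_1$ — $g$ fixes a nonzero vector iff $g^{d'}$ does, where $d'=|\phi|$ and $g^{d'}=N_\phi(\mu)\,N_\phi(s)$ with $N_\phi$ the $\phi$‑twisted norm; so the condition is that $N_\phi(s)\in\SL_2(q_0)$ has eigenvalue $N_\phi(\mu)^{-1}\in\F_{q'}^\times$. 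The ``if'' direction here is a Hilbert~90 argument on the (nonzero, at most $2$‑dimensional) $1$‑eigenspace of $g^{d'}$. Since $N_\phi(s)$ has $\phi$‑invariant characteristic polynomial, hence coefficients in $\F_{q_0}\cap\F_{q'}=:\F_{q_0'}$, a Shintani/Lang--Steinberg count of $\{s\in\SL_2(q_0):N_\phi(s)\text{ has a prescribed eigenvalue}\}$ shows the coset is ``good'' exactly when $N_\phi(\mu)^{-1}$ is an eigenvalue of some element of $\SL_2(q_0)$ with characteristic polynomial over $\F_{q_0'}$ (a norm/subfield condition on $\mu$), and that when good the proportion in the coset is $\gg 1/(q_0'-1)\ge 1/(q_0-1)$.

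Summing over cosets yields $\alpha(G)\gg\frac1{|\Omega_0|}\sum_{\gamma\ \mathrm{good}}\frac1{q_0'(\gamma)-1}$, and I must show this beats $3/(4q)$. The identity coset, and more generally all $\gamma=\mu$ with $\mu\in\F_{q_0}^\times$ (a subgroup's worth of $\Omega_0$, always present since $\pm I\in\Omega_0$), are good with weight $1/(q_0-1)$. For the remaining, ``semilinear'' cosets the weight $1/(q_0'(\gamma)-1)$ is still $\ge 1/(q_0-1)$ but is typically far larger, since $\F_{q_0}\cap\F_{q'}$ is usually a small power of $p$; and the set of good semilinear $\gamma$ is governed, through the norm condition, by the $\GamL_1(q)$‑behaviour of the scalar--Galois part of $\Omega_0$. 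Here I would invoke \Cref{prop:GamL_1}: if that part is semiregular on $\F_q\sm\{0\}$ the weighted sum is computed by hand (essentially a geometric‑type sum), and otherwise \Cref{prop:GamL_1} supplies an abundance of fixed‑point elements to feed the sum. Making this weighted sum exceed $3/(4q)$ uniformly over all pairs $q_0\le q$ — in particular controlling the interplay between the field $\F_{q_0}$ of $\SL_2(q_0)$ and the fixed field $\F_{q'}$ of the semilinear part, which is exactly what makes the weights vary — is the principal obstacle, and it is also where the constant $3/4$ has to be pinned down.

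For $\delta(V\rtimes G)$ one cannot merely use $\delta\ge(1-1/q)\alpha(G)$ from \Cref{lemma_delta_alpha}, because some of the eigenvalue‑$1$ elements counted above are semilinear and fix only about $q^{1/d'}$ vectors, as few as $p$, rather than $q$. Instead, rewriting \Cref{derangements_eta} as $\delta(V\rtimes G)=\frac1{|G|}\sum_{g:\ \mathrm{eval}\ 1}\bigl(1-1/\mathrm{fix}(g)\bigr)$ and working coset by coset: a non‑identity linear eigenvalue‑$1$ element fixes exactly $q$ vectors and contributes $1-1/q$; a semilinear one fixes at least $p\ (\ge 2)$ vectors, contributing at least $\tfrac12$, and in fact most of them fix noticeably more. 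A careful coset‑by‑coset accounting of these contributions — the ``somewhat more work'' referred to in the text — yields $\delta(V\rtimes G)>9/(16q)$. Finally, for $|A(G)|/|G|$: since $\SL_2(q_0)$ (with $q_0\ge 4$) is generated by transvections, each fixing a line and hence having eigenvalue $1$, we get $\SL_2(q_0)\le A(G)\trianglelefteq G$; consequently every good coset lies in $A(G)$, so $|A(G)|/|G|\ge\#\{\text{good cosets}\}/N$, and this is $\ge 1/(q-1)$ because either $N\le q-1$ (and the identity coset is already good, so the ratio is $\ge 1/N\ge 1/(q-1)$) or $N>q-1$ forces a nontrivial Galois part, in which case a short inspection of the weighted‑sum argument (using that a pure Galois transformation $\tau^{j}$ fixes $\F_{q'}^2$ and hence has eigenvalue $1$) exhibits enough good cosets to conclude.
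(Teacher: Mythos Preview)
Your coset-by-coset Shintani analysis is the right idea, but you take the hard road. The paper's simplification is to pass first to the subgroup $H:=G\cap(\GL_2(q_0)\rtimes\gen\tau)$, of index at most $(q-1)/(q_0-1)$ in $G$. Every element of $H$ preserves $W:=\F_{q_0}^2$, and for each coset $Ng$ of $N:=H\cap\GL_2(q_0)$, Shintani descent plus \Cref{l:simple_estimate} give that the proportion of elements of $Ng$ fixing a nonzero vector of $W$ is at least $1/s(i)\ge 1/q_0$, where $s(i)=|\F_{q_0}\cap\F_{r^i}|$ and $r^i$ is the fixed field of the Galois part of $g$. Thus \emph{every} $N$-coset in $H$ is good, so $\alpha(H,W)\ge 1/q_0$ and $\alpha(G,V)\ge(q_0-1)/(q_0(q-1))\ge 3/(4(q-1))$. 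Likewise $A(H,W)=H$ immediately gives $|A(G)|/|G|\ge|H|/|G|\ge(q_0-1)/(q-1)$.

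By working instead with $\SL_2(q_0)$-cosets in all of $G$, you are forced to decide which scalar cosets $\mu\,\SL_2(q_0)$ are good (those with $\mu+\mu^{-1}\in\F_{q_0}$), and then to control the weighted sum over good cosets. You call this ``the principal obstacle'' and defer it to an unspecified appeal to \Cref{prop:GamL_1}; that is a genuine gap. It is fixable---for instance, if only the trivial scalar is good then $|\Omega_0\cap\F_q^\times|$ must be coprime to $q_0-1$, whence bounded by $(q-1)/(q_0-1)$, and one recovers the same inequality---but the paper's passage to $H$ makes the obstacle disappear. The same reduction also streamlines the computation of $\delta(V\rtimes G)$: the paper writes down an explicit sum $\sum_{i\mid m}\varphi(m/i)(r^i-1)/(s(i)r^i)$ over divisors of $m=\log_r q$ (each good element in a coset over $\F_{r^i}$ fixes at least $r^i$ vectors of $W$), and a short case split $r=2$ versus $r>2$ then yields $9/(16(q-1))$, replacing your vaguer ``careful coset-by-coset accounting''.
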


\begin{proof} 
	Let $H:=G\cap (\GL_2(q_0)\rtimes \gen\tau)$, so $|G:H| \le (q-1)/(q_0-1)$. 
	Let $W=\F_{q_0}^2$; we now count the elements of $H$ fixing a vector in $W\sm \{0\}$. Assume that $H$ projects onto $\Gal(\F_q/\F_r)$, let $N:=H\cap \GL_2(q_0)$, 
	and consider any coset $Ng$ with $g\in H$. Assume that $g$ projects onto $\Gal(\F_q/\F_{r^i})$, and set $s(i):=|\F_{q_0}\cap \F_{r^i}|$. By Shintani descent (see \cite[Theorem 2.14]{burness2013uniform})
	 and \Cref{l:simple_estimate}, the proportion of elements of $Ng$ that fix a nonzero vector of $W$ is at least $1/s(i)\ge 1/q_0$. Therefore, $\alpha(H,W)\ge 1/q_0$.
	Since $|G:H|\le (q-1)/(q_0-1)$, we have $\alpha(G,V) \ge (q_0-1)/(q_0(q-1)) \ge 3/(4(q-1))$ (since $q_0\ge 4$), as desired. Moreover, $A(H,W)$ is a normal subgroup of $H$ containing $N$
and	intersecting each $N$-coset in $H$, so $A(H,W)=H$
and $|A(G)|/|G| \ge |H|/|G| \ge (q_0-1)/(q-1)>1/(q-1)$.
	
	Next, we prove the lower bound to $\delta(V\rtimes G,V)$. Assume $q=r^m$. Note that in a coset $Ng$ where $g\in H$ projects onto $\Gal(\F_q/\F_{r^i})$, each element fixing a nonzero vector fixes at least $r^i$ vectors. Since in such coset we produced at least $|N|/s(i)$ elements fixing a nonzero vector, we deduce
	\[
	\eta(G,V)\le \frac{1}{|G|}\left(\sum_{i\mid m} \frac{\varphi(m/i)|N|}{s(i) r^i}+ |G|-\sum_{i\mid m} \frac{\varphi(m/i)|N|}{s(i)}\right),
	\]
		where $\varphi$ is Euler's totient function,
	and so by \Cref{derangements_eta}
	\begin{equation}
		\label{eq:SL2_field}
		\delta(V\rtimes G,V)\ge \frac{|N|}{|G|}\sum_{i\mid m} \frac{\varphi(m/i)(r^i-1)}{s(i)r^i} \ge \frac{q_0-1}{(q-1)m}\sum_{i\mid m} \frac{\varphi(m/i)(r^i-1)}{s(i)r^i},
	\end{equation}
	where in the last inequality we used $|G|\le (q-1)m|N|/(q_0-1)$. 	  Using $\sum_{i\mid m}\varphi(m/i)=m$, $(r^i-1)/r^i\ge (r-1)/r$, and $s(i)\le q_0$, we see that the right-hand side of \eqref{eq:SL2_field} is at least $(q_0-1)(r-1)/(q_0(q-1)r)$. In particular, for $r>2$ (and $q_0\ge 4$) this is at least $9/(16(q-1))$, which is enough.
	Assume then $r=2$. Using that $s(1)=2$, that for $i>1$ we have $(2^i-1)/2^i\ge 3/4$ and $s(i)\le q_0$,
	 we get
	\begin{align*}
	\frac{1}{m}\sum_{i\mid m} \frac{\varphi(m/i)(2^i-1)}{s(i)2^i} &\ge \frac{\varphi(m)}{4m} + \frac{3(m-\varphi(m))}{4mq_0} \\
 &= \frac{3}{4q_0}+ \frac{\varphi(m)}{m} \left(\frac{1}{4} - \frac{3}{4q_0}\right) >\frac{3}{4q_0}.
	\end{align*}
	In particular, we get from \eqref{eq:SL2_field} that $\delta(V\rtimes G,V)> 3(q_0-1)/(4q_0(q-1)) \ge 9/(16(q-1))$ also in this case.
\end{proof}

\begin{proof}[Proof of \Cref{prop:GamL_2}]
Let $G\le \GamL_2(q^{d/2})$ be such that $G\le \GL_d(q)$ is irreducible and primitive. Let $H:=G\cap \GL_2(q^{d/2})$ and $L:=\mathrm F^*(H)$. By Clifford's theorem, $V$ is homogeneous as an $\F_{q^{d/2}}L$-module. If a component is one-dimensional, we have $L\le \Z(H)$ and $\C_H(L)=H$. But $\Z(L)=\C_H(L)$, from which $H=L$ consists of scalars, and since $|G:H|\le d/2$, $G$ cannot be irreducible, false. 
Therefore, $L$ is irreducible on $V$, and so it is absolutely irreducible.
If the layer $E$ of $L$ is nontrivial, then $E=\SL_2(5)$ or $E=\SL_2(q_0)$ with $q_0\ge 4$. Since $G$ normalizes $E$, these cases have been handled in \Cref{l:sl2(5),l:sl2(q_0)}. If $E=1$, then $L=ZQ_8$ with $Z\le \F_{q^{d/2}}^\times$. Then in fact $G$ normalizes $Q_8$,
and so has been handled in \Cref{l:q8}.
\end{proof}

\section{Extraspecial groups}
\label{sec:extraspecial}
\Cref{prop:combination} for extraspecial groups 
is immediate.

\begin{lemma}
\label{l:extraspecial_asymp}
    Let $G\le \GL_d(q)$ be extraspecial of order $r^{2s+1}$ and absolutely irreducible. Then $\alpha(G)>|\Sp_{2s}(r)|/q^{o(d)}$ as $|G|\to \infty$.
\end{lemma}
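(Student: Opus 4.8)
The plan is to use the structure of an absolutely irreducible extraspecial linear group $G$ of order $r^{2s+1}$. It is well known that such a group forces $d = r^s$ and that the normalizer $N_{\GL_d(q)}(G)$ induces (modulo scalars and the centralizer) the full symplectic group $\Sp_{2s}(r)$ acting on $G/Z(G) \cong \F_r^{2s}$; more precisely, $N := N_{\GL_d(q)}(G)/ (Z(H)G)$ embeds in $\Sp_{2s}(r)$, and conversely every symplectic transformation is realized provided $q$ contains enough roots of unity. The key point is that $G$ has very few elements with eigenvalue $1$ on $V$ by itself --- essentially just the scalars of order dividing the exponent --- so the factor $|\Sp_{2s}(r)|$ in the claimed bound must be \emph{paid} by the index $|N_{\GL_d(q)}(G) : Z(H)G|$, not gained. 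Reading the statement carefully: since $\alpha(G)$ is the proportion of elements of $G$ (the extraspecial group itself, or $ZG$) with eigenvalue $1$, and $|G| = r^{2s+1}$ while $d = r^s$, we have $|G| = r \cdot d^2 = q^{o(d)}$ as $|G| \to \infty$, so \emph{any} positive lower bound on the \emph{number} of fixed-point-having elements, divided by $|G|$, beats $|\Sp_{2s}(r)|/q^{o(d)}$, because $|\Sp_{2s}(r)| \le r^{(2s+1)s} \le |G|^s$, which is still $q^{o(d)}$ since $s = \log_r d$ and $q \ge r$.

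Concretely, the proof boils down to two observations. First, $\alpha(G) \ge 1/|G|$ always, since the identity has eigenvalue $1$; equivalently $\alpha(G)|G| \ge 1$. Second, $|\Sp_{2s}(r)| < r^{2s^2 + s}$, and since $d = r^s$ we get $s \le \log_2 d$, so
\[
|\Sp_{2s}(r)| < r^{2s^2+s} \le (r^s)^{2s+1} = d^{2s+1} = d^{O(\log d)} = q^{o(d)}
\]
(the last equality using $q \ge 2$ and $\log^2 d = o(d)$). Therefore $|\Sp_{2s}(r)|/q^{o(d)} \to 0$ faster than $1/|G| = 1/(rd^2) = q^{-o(d)}$ does, provided we track the $o(d)$ exponents: $|G| = r^{2s+1} \le d^3 = q^{o(d)}$ as well. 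Combining, $\alpha(G) \ge 1/|G| = q^{-o(d)} > |\Sp_{2s}(r)|/q^{o(d)}$ for $|G|$ large, which is exactly the assertion (the $q^{o(d)}$ on the right is a denominator, so a larger $o(d)$ there only helps).

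The main thing to get right is the bookkeeping of the $o(d)$ terms: one must check that the quantity $|G| \cdot |\Sp_{2s}(r)|$ is $q^{o(d)}$, i.e.\ that $\log(|G| \cdot |\Sp_{2s}(r)|) = o(d \log q)$. Since $|G| \cdot |\Sp_{2s}(r)| \le r^{2s+1} \cdot r^{(2s+1)s} = r^{(2s+1)(s+1)}$ and $d = r^s \ge 2^s$ gives $s \le \log_2 d$, we have $(2s+1)(s+1) = O(\log^2 d) = o(d) = o(d \log_r q)$ since $\log q \ge \log r \ge \log 2$; dividing by $\log q / \log r$ only shrinks things. Hence $|\Sp_{2s}(r)| \le q^{o(d)}/|G|$, so $\alpha(G) \ge 1/|G| \ge |\Sp_{2s}(r)|/q^{o(d)}$. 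I do not expect any real obstacle here --- the lemma is deliberately crude (note the remark in the excerpt that "\Cref{prop:combination} for extraspecial groups is immediate"), and the content is just this asymptotic comparison between the polynomial-in-$d$ size of the relevant groups and the exponential-in-$d$ quantity $q^{o(d)}$.
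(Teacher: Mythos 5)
Your proof is correct and follows the paper's approach exactly: both use only the identity element to get $\alpha(G)\ge 1/|G|$, then observe that $|G|\cdot|\Sp_{2s}(r)|$ is polynomial in $d=r^s$ (hence $q^{o(d)}$) by the crude bound $|\Sp_{2s}(r)|<r^{2s^2+s}$ and $s\le\log_2 d$. The opening paragraph about normalizers is a digression you do not use; the actual argument starting at ``Concretely'' matches the paper's one-line proof.
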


\begin{proof}
    We have $d=r^s$, and we simply use that the identity has eigenvalue $1$; then 
    \[
\frac{\alpha(G)}{|\Sp_{2s}(r)|}>\frac{1}{|G||\Sp_{2s}(r)|} > \frac{1}{r^{2s+1}r^{4s^2}} > \frac{1}{2^{o(d)}} \ge \frac{1}{q^{o(d)}}
\]
as $d\to \infty$.
\end{proof}

\begin{remark}
\label{rem:extraspecial_problem}
    The assumption that $G$ is sufficiently large is convenient here. For example, if $G=2_-^{11}$ and $q=3$ then $|\Or_{10}^-(2)|/(\alpha(G)|V|^{1/2})$ is larger than $2$ millions. (Note that the nontrivial elements of $2_\pm^{2s+1}$ with eigenvalue $1$ are precisely the noncentral involutions, whose number is seen to be $4^s\pm 2^s-2$.) This says that counting elements of $G$ with eigenvalue $1$ is far from enough to handle all subgroups of $N:=\N_{\GL_d(3)}(G)=G.\Or_{10}^-(2)$.
    
    The argument for subgroups of $N$ can be amended with some more care. However, this
    issue would add (seemingly technical) complications to the arguments of \Cref{sec_primitive_linear}, where we were able to essentially ignore all extraspecial and quasisimple groups of small order and reduce the problem to the generalized Fitting subgroup.
\end{remark}

\section{Quasisimple groups}
\label{sec:quasisimple}
In this section, $d\ge 3$, $V\cong \F_q^d$,  $G$ is a quasisimple group with $S:=G/\Z(G)$, and $G\le \GL_d(q)$ is absolutely irreducible.

\subsection{Alternating groups and groups of Lie type in non-defining characteristic}  We first isolate the case of the fully deleted permutation module for alternating groups.

\begin{lemma}
\label{l:fully_deleted}
Let $G=A_m$ with $m\ge 5$, and let $V$ denote the fully deleted permutation module. Then, $\alpha(G,V)\ge 1- 2(1+\log(m))/m$.
\end{lemma}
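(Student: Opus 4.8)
The plan is to compute $\alpha(A_m, V)$ fairly explicitly by counting permutations $g \in A_m$ having eigenvalue $1$ on the fully deleted permutation module $V$, and showing that ``most'' even permutations qualify. Recall that $V$ sits inside the permutation module $\F_q^m = V \oplus \langle (1,1,\dots,1)\rangle$ (when $p \nmid m$; otherwise $V$ is the quotient of the sum-zero subspace by the diagonal, but the eigenvalue count works out the same way up to bookkeeping). On the permutation module $\F_q^m$, a permutation $g$ with cycle type having $c(g)$ cycles has fixed space of dimension $c(g)$, so $g$ has eigenvalue $1$ on $\F_q^m$ always, and the multiplicity of the eigenvalue $1$ is exactly $c(g)$. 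Hence $g$ fails to have eigenvalue $1$ on $V$ only in the ``edge'' situation where the entire fixed space on $\F_q^m$ is accounted for by the all-ones vector, i.e.\ essentially when $c(g) = 1$, that is, when $g$ is an $m$-cycle (with the usual small corrections when $p \mid m$, where one must instead check that $g$ has eigenvalue $1$ on the sum-zero space modulo the diagonal; again the only problematic classes are those with very few cycles).

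Concretely, I would argue: if $g$ has at least two cycles on $\{1,\dots,m\}$, then $\dim \C_{\F_q^m}(g) = c(g) \ge 2$, so the fixed space strictly contains $\langle(1,\dots,1)\rangle$, and therefore $g$ has eigenvalue $1$ on $V$ (in the $p \nmid m$ case directly; in the $p \mid m$ case one checks the fixed space of $g$ on the sum-zero subspace $V_0$ has dimension $\ge c(g)-1 \ge 1$ and still meets the complement of the diagonal line once $c(g) \ge 2$ — a rank computation shows $\C_{V_0}(g)$ has dimension exactly $c(g)-1$, and this exceeds $0$ when $c(g)\ge 2$, with the diagonal line contained in it only when... — so here one should take $c(g) \ge 3$ to be safe, or handle $c(g)=2$ by hand). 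So the set of $g \in A_m$ with \emph{no} eigenvalue $1$ on $V$ is contained in the set of even permutations with at most one cycle (an $m$-cycle, which requires $m$ odd) together with possibly those with exactly two cycles when $p \mid m$. The number of $m$-cycles is $(m-1)!$, and the number of permutations with exactly two cycles is at most $(m-1)! \cdot H_{m-1} \le (m-1)!(1 + \log m)$ by the standard estimate (Stirling numbers $\genfrac[]{0pt}{}{m}{2} = (m-1)! H_{m-1}$). Dividing by $|A_m| = m!/2$ gives a proportion at most $2(1 + \log m)/m$ for the exceptional set, hence $\alpha(A_m, V) \ge 1 - 2(1+\log m)/m$.

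The main obstacle, I expect, is the characteristic-dividing-$m$ case and the precise handling of the eigenvalue-$1$ condition on the quotient module rather than the submodule — one must be careful that ``eigenvalue $1$ on $V$'' for $V = V_0/\langle \text{diag}\rangle$ is equivalent to the fixed space $\C_{V_0}(g)$ not being contained in $\langle\text{diag}\rangle$, and compute $\dim \C_{V_0}(g)$ in terms of cycle structure. A clean way to package this uniformly (both characteristics) is: $g$ has eigenvalue $1$ on $V$ as long as $\dim \C_{\F_q^m}(g) \ge 2$ when $p \nmid m$, and as long as $\dim \C_{\F_q^m}(g) \ge 3$ — or a careful $c(g)=2$ analysis — when $p \mid m$. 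Then the exceptional set has size at most (number of $m$-cycles) $+$ (number of elements with exactly two cycles) $\le (m-1)!\, (2 + \log m)$, and even this crude bound, divided by $m!/2$, is $\le 2(2+\log m)/m$; to land exactly on $1 - 2(1+\log m)/m$ one notes that when $p \mid m$ the two-cycle elements split as products of two cycles of lengths $a + b = m$ and an explicit rank check shows $g$ \emph{does} have eigenvalue $1$ on $V$ unless both parts contribute trivially, which does not happen, so in fact only the $m$-cycles are genuinely exceptional and the cruder term is not needed — I would double-check this last point since it is where the stated constant is tight. Either way, the arithmetic of $(m-1)! \cdot (\text{something}) / (m!/2) = (\text{something})/(m/2)$ produces the bound in the statement.
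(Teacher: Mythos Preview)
Your approach is essentially the paper's: show that permutations with at least three cycles fix a nonzero vector of $V$, then bound the proportion of elements in $A_m$ with at most two cycles. The paper simply uses the uniform criterion $c(g)\ge 3$ (which works regardless of whether $p\mid m$) rather than splitting into $c(g)\ge 2$ versus $c(g)\ge 3$ cases.

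Where your argument slips is in recovering the exact constant $2(1+\log m)/m$. Your crude bound gives $2(2+\log m)/m$, and your proposed sharpening --- that two-cycle permutations always have eigenvalue $1$ on $V$ when $p\mid m$ --- is false. For instance, take $m=10$, $p=5$, and $g$ with cycle type $(3,7)$ (which is even). The fixed space of $g$ on $V_0$ is exactly the diagonal line $D$, and if $gv-v=\lambda(1,\dots,1)$ then summing over the $3$-cycle forces $3\lambda=0$ in $\F_5$, hence $\lambda=0$; so $g$ has no eigenvalue $1$ on $V=V_0/D$.

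The correct observation, which the paper uses (via the indicator functions $[m\text{ odd}]$ and $[m\text{ even}]$), is pure parity: an $m$-cycle has sign $(-1)^{m-1}$, while a product of two cycles of lengths $a+b=m$ has sign $(-1)^{a-1+b-1}=(-1)^m$. So $m$-cycles lie in $A_m$ exactly when $m$ is odd, and two-cycle permutations lie in $A_m$ exactly when $m$ is even --- never both. Hence the exceptional proportion is $2/m$ (if $m$ odd) or $2H_{m-1}/m\le 2(1+\log m)/m$ (if $m$ even), giving the stated bound in either case.
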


\begin{proof}
Let $g\in G$. It is easy to see that if $g$ has at least three cycles, then $g$ fixes a nonzero vector of $V$. Let us then count these elements.

The proportion of elements of $G$ with at most two cycles is
\begin{equation}
\label{eq:permutation_module}
\frac{2\ind{m~\text{odd}}}{m}  + 2\ind{m~\text{even}}\left(\sum_{1\le i<m/2} \frac{1}{i(m-i)} + \frac{2}{m^2}\right),
\end{equation}
where $\ind{-}$ denotes the indicator function.
 Note that 
\begin{align*}
\sum_{1\le i<m/2} \frac{1}{i(m-i)} + \frac{2}{m^2}  &= \frac{1}{m}\sum_{1\le i<m/2} \left(\frac{1}{i} + \frac{1}{m-i}\right) + \frac{2}{m^2} \\
&= \frac{1}{m}\sum_{1\le i\le m-1} \frac{1}{i} \\
&\le \frac{1+\log(m)}{m},
\end{align*}
where in the last inequality we used a standard upper bound for the partial sum of the harmonic series. The lemma follows then from this estimate and \eqref{eq:permutation_module}.
\end{proof}

Assume in the following lemma that either $S=A_m$ with $m\ge 5$, or $S$ is a group of Lie type of (untwisted) Lie rank $r$, defined over $\F_s$, with $(q,s)=1$. In these cases, as we shall recall in the proof, we have that $d\to \infty$ as $|S|\to \infty$.

\begin{lemma}
\label{l:quasi_asymp}
We have $\alpha(G,V) > |\Out(S)|/q^{o(d)}$ as $|S|\to \infty$.
\end{lemma}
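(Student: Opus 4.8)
The plan is to reduce the claimed asymptotic inequality to the estimate $\log|\Out(S)|-\log\alpha(G,V)=o(d)$ as $|S|\to\infty$, and to establish this using the crude bound $\alpha(G,V)\ge 1/|G|$ (the identity has eigenvalue $1$) in every case but one. First I would recall that $d\to\infty$ as $|S|\to\infty$: for $S=A_m$ because the (fully) deleted permutation module, of dimension $m-1$ or $m-2$ over $\F_q$, is the unique faithful absolutely irreducible $\F_qA_m$-module of dimension $o(m^2)$ --- the next smallest faithful degree being $\ge\tfrac12(m-1)(m-2)-O(1)$, by the classical theory of representations of symmetric groups (in particular James's results on low-dimensional modular representations) --- and for $S$ of Lie type in characteristic prime to $q$ by the Landazuri--Seitz--Zalesskii bounds and their refinements. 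Sporadic $S$ are excluded by hypothesis. Since $|\Out(S)|\le|S|\le|G|$, it then suffices, in each case, either to show $\log|G|=o(d)$, or to exhibit $\alpha(G,V)$ bounded away from $0$ with $|\Out(S)|$ bounded.

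For $S$ of Lie type of rank $r$ over $\F_s$ with $(q,s)=1$, I would combine the order estimate $\log|G|=O(\log|S|)=O(r^2\log s)$ (which is $O(1)$ for the exceptional types) with the Landazuri--Seitz--Zalesskii lower bound on $d$, which shows that $d$ is at least a positive power of $s$ whose exponent tends to infinity with $r$ (and $d\gg s$ already for $r=1$); in every limit this outgrows $\log|G|$, so $\log|G|=o(d)$ and hence $\alpha(G,V)\ge 1/|G|>|\Out(S)|\,q^{-o(d)}$. The same crude argument handles $S=A_m$ unless $V$ is the deleted permutation module: if $G$ is a proper cover of $A_m$ then either $G$ lies among finitely many groups (irrelevant in the limit) or $G=2.A_m$ acting on a basic spin module, where $d\ge 2^{\lfloor(m-2)/2\rfloor}$ overwhelms $\log|G|=O(m\log m)$; and if $G=A_m$ with $V$ not the deleted permutation module, then $d\ge\tfrac12(m-1)(m-2)-O(1)$, again far larger than $\log|G|$.

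The one remaining case is $G=A_m$ with $V$ the deleted permutation module ($\dim V\in\{m-1,m-2\}$), where the crude bound is too weak ($\log|G|\sim m\log m$ but $d\sim m$). Here I would invoke \Cref{l:fully_deleted}: $\alpha(G,V)\ge 1-2(1+\log m)/m\to 1$, so $-\log\alpha(G,V)=o(1)$, while $|\Out(A_m)|\le 4$; since $d=m-O(1)\to\infty$ this gives $\log|\Out(S)|-\log\alpha(G,V)=O(1)=o(d)$, as required. Assembling the cases completes the proof.

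The only substantive ingredient --- everything else being bookkeeping --- is the quantitative lower bound on $d$: one needs the minimal faithful cross-characteristic degree of $S$ to grow fast enough (a power of $s$ of unbounded exponent, or exponentially in $r$ for covers of $A_m$) to dominate $\log|G|$. This is supplied by Landazuri--Seitz--Zalesskii (with the Tiep--Zalesskii refinements) for the Lie type groups, and by the representation theory of symmetric groups --- James's work in particular --- for the alternating groups.
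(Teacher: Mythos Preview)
Your proposal is correct and follows essentially the same approach as the paper: both arguments use the trivial bound $\alpha(G,V)\ge 1/|G|$ together with standard lower bounds on the minimal faithful dimension (Landazuri--Seitz--Zalesskii / Kleidman--Liebeck for Lie type in cross characteristic, James / Guralnick--Tiep for alternating groups) to show $\log|G|=o(d)$, and both single out the fully deleted permutation module for $A_m$ as the one case requiring \Cref{l:fully_deleted}. Your treatment is in fact a bit more careful than the paper's in explicitly handling proper covers of $A_m$ via the spin dimension, but otherwise the two proofs are the same.
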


\begin{proof}
 Assume first $S=A_m$. By \Cref{l:fully_deleted}, we may assume that $V$ is not the fully deleted permutation module. Then \cite[Lemma 6.1]{guralnicktiep_noncoprime} gives $d \ge (m^2-5m+2)/2$. Therefore, for $m$ sufficiently large, counting only the identity in $G$, we get
 \[
\alpha(G,V)\ge \frac{1}{m!} > \frac{|\Out(S)|}{2^{o(d)}}
 \]
as wanted.

Assume now $S$ is of Lie type; note that $|G|\le s^{O(r^2)}$ and $|\Out(S)|\ll r\log(s)$.
By \cite[Proposition 5.3.1 and Theorem 5.3.9]{kleidmanliebeck}, we see that $d\ge s^{cr}$ for some positive absolute constant $c$, so
\[
\alpha(G,V)\ge \frac{1}{|G|}\ge \frac{|\Out(S)|}{2^{o(d)}}
\]
also in this case.
\end{proof}

\subsection{Groups of Lie type in defining characteristic} 

Assume now $S$ is a group of Lie type of untwisted rank $r$ defined over $\F_s$, and assume $(q,s)\neq 1$. Assume $q$ is a power of the prime $p$. 

If $|S|$ is sufficiently large, then $G=X_\sigma /Z$, where $X$ is a simple linear algebraic group of simply connected type over $K:=\overline{\F_q}$, $\sigma$ is a Steinberg endomorphism of $X$, and $Z$ is a central subgroup of $X_\sigma:=\{x\in X \mid x^\sigma=x\}$ (see e.g. \cite[Theorem 5.1.4 and Tables 5.1.A-5.1.B]{kleidmanliebeck}). 

The irreducible representations of $X_\sigma$ over $K$ are restriction of representations of $X$, and are described by the theory of \textit{highest weights}. We refer to \cite[Section 5.4]{kleidmanliebeck} or \cite[Section 15]{malletesterman} for the basic aspects of this theory (which, for our purposes, are sufficient).

Denote by $u(-)$ be the proportion of $p$-elements. 

\begin{lemma}
\label{l: unipotent_covering}
\label{l: p_elements}
$u(X_\sigma)=u(G)/|Z|$.
\end{lemma}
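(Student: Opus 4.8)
The plan is to compare the set of $p$-elements of $X_\sigma$ with the set of $p$-elements of $G = X_\sigma/Z$ via the quotient map $\varphi\colon X_\sigma \to G$. The crucial point is that $Z$ is a central subgroup of $X_\sigma$, and since $X_\sigma$ is a finite group of Lie type over a field of characteristic $p$, its center is a $p'$-group; hence $|Z|$ is coprime to $p$. First I would observe that $\varphi$ restricts to a bijection between the set of $p$-elements of $X_\sigma$ and the set of $p$-elements of $G$. Indeed, if $x \in X_\sigma$ is a $p$-element then $\varphi(x)$ is a $p$-element of $G$; conversely, given a $p$-element $\bar{y} \in G$, pick any preimage $y \in X_\sigma$ and write $y = y_p y_{p'}$ for its commuting Jordan ($p$- and $p'$-) decomposition. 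Then $\varphi(y) = \varphi(y_p)\varphi(y_{p'})$ is the decomposition of $\bar y$, and since $\bar y$ is a $p$-element we get $\varphi(y_{p'}) = 1$, i.e.\ $y_{p'} \in Z$. Replacing $y$ by $y_p$ shows every $p$-element of $G$ has a $p$-element preimage, so $\varphi$ maps $p$-elements onto $p$-elements.

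Next I would check injectivity of this restricted map: if $x, x'$ are $p$-elements of $X_\sigma$ with $\varphi(x) = \varphi(x')$, then $x^{-1}x' \in Z$; but $x^{-1}x'$ is a product of commuting $p$-elements (they commute because $Z$ is central, so $x$ and $x'$ commute), hence a $p$-element, while $Z$ is a $p'$-group, forcing $x = x'$. Therefore the number of $p$-elements of $X_\sigma$ equals the number of $p$-elements of $G$. Dividing both by the respective group orders and using $|X_\sigma| = |Z|\,|G|$ yields
\[
u(X_\sigma) = \frac{\#\{p\text{-elements of }X_\sigma\}}{|X_\sigma|} = \frac{\#\{p\text{-elements of }G\}}{|Z|\,|G|} = \frac{u(G)}{|Z|},
\]
which is exactly the claimed identity.

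The only genuine input beyond elementary group theory is the fact that $\Z(X_\sigma)$, and hence $Z$, is a $p'$-group; this is standard for finite groups of Lie type (the center of the universal/simply connected form is a $p'$-group, being a quotient related to the fundamental group, with order dividing the relevant structure constants coprime to $p$), and I would cite it from the references already invoked in this subsection, e.g.\ \cite{kleidmanliebeck} or \cite{malletesterman}. I do not expect any serious obstacle here; the main thing to be careful about is spelling out the Jordan decomposition argument so that the surjectivity and injectivity of $\varphi$ on $p$-elements are both clean, and making sure the commutativity of preimages (used to conclude that products and quotients of the relevant $p$-elements are again $p$-elements) is justified by the centrality of $Z$.
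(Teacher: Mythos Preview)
Your proof is correct and takes the same approach as the paper: the paper's one-line argument ``Since $(|Z|,p)=1$, each $p$-element of $G$ lifts to a unique $p$-element of $X_\sigma$'' is precisely what you spell out via the Jordan decomposition for surjectivity and the $p'$-ness of $Z$ for injectivity. Your version is simply a more detailed unpacking of the same idea.
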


\begin{proof} Since $(|Z|,p)=1$, each $p$-element of $G$ lifts to a unique $p$-element of $X_\sigma$.
\end{proof}

Next, we recall a result of Steinberg (see \cite[Proposition 5.1.9]{carterbook}). 
We define the level $s_0$ of a Steinberg endomorphism $\sigma$ of $X$ as the absolute value of the eigenvalues of $\sigma$ on $\Gamma\otimes_{\mathbf Z}\mathbf C$, where $\Gamma$ is the character group; see \cite{malletesterman}. If $X_\sigma$ is a Suzuki or Ree group then $s_0$ is not an integer and $s_0^2$ is an integer. In all other cases, $s_0$ is an integer.

This also gives an interpretation of the sentence ``$S$ is defined over $\F_s$'', appearing in the first paragraph of this section -- this holds if and only if either $s_0=s$, or $X_\sigma$ is a Suzuki or Ree group and $s=s_0^2$. 

\begin{theorem} (Steinberg)
\label{p: regular_unipotent}
Assume that $X$ has rank $r$, and let $\sigma$ be a Steinberg endomorphism of $X$ of level $s_0$. Then, the proportion of regular unipotent elements of $X_\sigma$ is $1/s_0^r$.
\end{theorem}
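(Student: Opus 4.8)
The plan is to pass from the finite group to the ambient algebraic group $X$, count the $\sigma$-fixed points lying in the regular unipotent variety of $X$, and divide by $|X_\sigma|$; the level $s_0$ will enter through a fixed-point count for a connected unipotent group of dimension $r$.

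\textbf{Step 1 (geometry).} Recall that the set $\mathcal{U}$ of regular unipotent elements of $X$ is a single $X$-conjugacy class, Zariski-open and dense in the unipotent variety; in particular it is irreducible, and being the unique unipotent class of maximal dimension it is $\sigma$-stable. As a $\sigma$-stable orbit of the connected group $X$, it contains a $\sigma$-fixed point by the Lang--Steinberg theorem; fix such a $u$ and set $C:=\C_X(u)$. I will use two standard structural facts on centralizers of regular unipotent elements in a simple group (see \cite{carterbook,malletesterman}): first, $\dim C=r$; second, $C=\Z(X)\,C^{\circ}$ with $C^{\circ}$ a connected unipotent group (so $\dim C^{\circ}=r$) and $A:=C/C^{\circ}$ a finite abelian group. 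I will also use that $|H^{\sigma}|=s_0^{\,\dim H}$ for every $\sigma$-stable connected unipotent group $H$; applied to $C^{\circ}$ this gives $|(C^{\circ})^{\sigma}|=s_0^{\,r}$, and this equality is exactly the point at which the level $s_0$ enters (including the Suzuki and Ree cases, where $r=\op{rank}(X)\in\{2,4\}$ and $s_0^{\,r}\in\mathbf Z$ although $s_0\notin\mathbf Z$).

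\textbf{Step 2 (orbit count).} Identify $\mathcal{U}\cong X/C$ as an $X$-variety. By the usual description of the rational points of a homogeneous space (Lang--Steinberg; see \cite{carterbook}), $X_\sigma$ acts on $\mathcal{U}^{\sigma}$ with orbits parametrised by $H^1(\sigma,A)$, the orbit attached to a class $[a]$ having size $|X_\sigma|/|(C_a)^{\sigma}|$, where $C_a$ is $C$ equipped with the twisted Steinberg endomorphism $\sigma_a=\op{ad}(\tilde a)\circ\sigma$ for a representative $\tilde a\in C$. Since $C^{\circ}$ is connected and $\sigma_a$-stable, $H^1(\sigma_a,C^{\circ})=1$, so the exact sequence $1\to C^{\circ}\to C\to A\to 1$ gives $|(C_a)^{\sigma}|=|(C^{\circ})^{\sigma_a}|\cdot|A^{\sigma_a}|$. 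Now $\op{ad}(\tilde a)$ agrees with conjugation by the $C^{\circ}$-component of $\tilde a$ (the $\Z(X)$-component acting trivially by conjugation), hence is an inner automorphism of the connected group $C^{\circ}$, so $|(C^{\circ})^{\sigma_a}|=|(C^{\circ})^{\sigma}|=s_0^{\,r}$; and as $A$ is abelian, $\op{ad}(\tilde a)$ acts trivially on $A$, whence $A^{\sigma_a}=A^{\sigma}$. Thus $|(C_a)^{\sigma}|=s_0^{\,r}\,|A^{\sigma}|$ independently of $[a]$.

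\textbf{Step 3 (summation).} Combining these,
\[
|\mathcal{U}^{\sigma}|=\sum_{[a]\in H^1(\sigma,A)}\frac{|X_\sigma|}{|(C_a)^{\sigma}|}=\frac{|X_\sigma|}{s_0^{\,r}}\cdot\frac{|H^1(\sigma,A)|}{|A^{\sigma}|}.
\]
For a finite abelian group $A$ with automorphism $\sigma$ one has $|H^1(\sigma,A)|=|A/(\sigma-1)A|=|\ker(\sigma-1)|=|A^{\sigma}|$, so the last fraction is $1$ and $|\mathcal{U}^{\sigma}|=|X_\sigma|/s_0^{\,r}$; dividing by $|X_\sigma|$ gives the proportion $1/s_0^{\,r}$. (Incidentally this shows that $\mathcal{U}^{\sigma}$ splits into exactly $|A^{\sigma}|$ conjugacy classes of $X_\sigma$.) The main obstacle is the input $|H^{\sigma}|=s_0^{\,\dim H}$ for $\sigma$-stable connected unipotent $H$ applied to $C^{\circ}$: this is the genuinely non-formal step, and it rests on knowing the isomorphism type, as a $\sigma$-variety, of the connected centralizer of a regular unipotent element. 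An alternative is to fiber $\mathcal{U}$ over the flag variety by sending each regular unipotent element to the unique Borel subgroup containing it, reducing to counting regular unipotents inside one $U_\sigma$; but that route requires the same computation of $|(\C_X(u)\cap U)^{\sigma}|$ and offers no real simplification.
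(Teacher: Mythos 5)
The paper itself does not prove this statement; it cites Carter's book (Proposition 5.1.9), where the count is carried out by fibering the regular unipotent variety over the flag variety and computing inside a fixed Borel. Your argument is a correct and complete alternative proof via Galois (Lang--Steinberg) cohomology, and it is a genuinely different route.

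What you do differently: you parametrize $X_\sigma$-orbits on $\mathcal{U}^\sigma$ by $H^1(\sigma,A)$ with $A=C/C^\circ$, compute each twisted stabilizer $|(C_a)^\sigma|=|(C^\circ)^{\sigma_a}|\,|A^{\sigma_a}|$ (using $H^1(\sigma_a,C^\circ)=1$), and observe that the twist by a representative $\tilde a=zc$ with $z\in\Z(X)$, $c\in C^\circ$ is inner on $C^\circ$ and trivial on the abelian quotient $A$, so the count is independent of the class $[a]$. Combined with $|H^1(\sigma,A)|=|A^\sigma|$ for finite abelian $A$, the sum over classes collapses and the $|A^\sigma|$ factors cancel; the ``level'' enters exactly once, through $|(C^\circ)^\sigma|=s_0^r$. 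Carter's route avoids the cohomological bookkeeping but has to count regular unipotents inside $U_\sigma$ directly, which amounts to the same essential input. One could argue your route cleanly isolates where the nontrivial geometry lies, and as a bonus yields the class count $|A^\sigma|$.

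Two small remarks on the inputs you flag as external. First, the structural fact $C=\Z(X)C^\circ$ with $C^\circ$ connected unipotent of dimension $r$ does hold in all characteristics for $X$ simply connected simple (as in the paper's setup), though it is worth noting that $\Z(X)\cap C^\circ=1$ because $\Z(X)$ consists of semisimple elements while $C^\circ$ is unipotent, so $A\cong\Z(X)$. Second, your blanket claim $|H^\sigma|=s_0^{\dim H}$ for every $\sigma$-stable connected unipotent $H$ might at first glance look suspicious in the Suzuki/Ree cases (where $s_0\notin\mathbf Z$), but it is in fact fine: a connected unipotent group is isomorphic to affine space as a variety, so the $\ell$-adic Lefschetz formula gives $|H^\sigma|=\lambda$ with $\lambda^m=q^{\dim H}$ (where $\sigma^m=F_q$), forcing $|H^\sigma|=s_0^{\dim H}$ whenever this is an integer and showing no such $H$ exists otherwise. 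In the application $\dim C^\circ=r$ is even for ${}^2B_2,{}^2G_2,{}^2F_4$, so $s_0^r\in\mathbf Z$ as needed. You correctly identify this fixed-point count on $C^\circ$ as the genuinely non-formal step; the rest of the argument is sound.
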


Let $\lambda_1, \ldots, \lambda_r$ be the fundamental dominant weights of $X$, corresponding to the simple roots $\alpha_1, \ldots, \alpha_r$. A character of the form $\sum_{i=1}^r c_i\lambda_i$, with $c_i\in \mathbf Z$, is called \textit{$p$-restricted} if $0\le c_i\le p-1$ for every $i$. An irreducible $KX$-representation $V=V(\lambda)$ is called $p$-restricted if $\lambda$ is $p$-restricted (here $\lambda$ denotes the highest dominant weight of $V$).

If $X_\sigma$ is a Suzuki or Ree group, then the irreducible $KX_\sigma$-modules are the $V(\lambda)$, where $\lambda = \sum_{i=1}^r c_i \lambda_i$, $0\le c_i < s_0^2$, and $\lambda$ is \textit{supported on short roots}, namely, $c_i=0$ if $\alpha_i$ is long. Moreover, these $s_0^r$ representations are pairwise non-equivalent (see \cite[Theorem, Section 20.2]{humphreys2006modular}).  In this case we will also say that $V$ is supported on short roots.

We now record a lemma concerning field of definition of $p$-restricted representations.

\begin{lemma}
\label{l:field_p_restricted}
Let $\sigma$ be a Steinberg endomorphism of $X$ of level $s_0$, and let $d$ be the order of the permutation induced by $\sigma$ on the Dynkin diagram. Let $V$ be an irreducible $p$-restricted $KX_\sigma$-representation, supported on short roots if $X_\sigma$ is Suzuki or Ree, and assume the minimal field of definition for $V$ is $\F_{p^f}$. Then $p^f = s_0$ or $p^f = s_0^d$. 
\end{lemma}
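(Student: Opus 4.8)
The plan is to convert the statement into one about Frobenius twists of the highest weight. Inflating along the covering $X_\sigma\to G$ does not change the minimal field of definition, so we may regard $V$ as an (absolutely irreducible) $\overline{\F_p}X_\sigma$-module; by Steinberg's theorem $V\cong L(\lambda)|_{X_\sigma}$ for a $p$-restricted dominant weight $\lambda$ (supported on short roots when $X_\sigma$ is Suzuki or Ree), and we assume $\lambda\neq 0$, as is the case of interest. Writing $\phi$ for the $p$-power Frobenius, the minimal field of definition of $V$ is $\F_{p^f}$ with $f$ the least positive integer such that $V^{\phi^f}\cong V$, and since $L(\mu)^{\phi}\cong L(p\mu)$ as $KX$-modules we have $V^{\phi^k}\cong L(p^k\lambda)|_{X_\sigma}$ for all $k\geq 0$. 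I will also use two standard facts: $L(\mu)^{\sigma}\cong L(\sigma^\#\mu)$, where $\sigma^\#$ is the map induced by $\sigma$ on the weight lattice, and $L(\mu)^{\sigma}|_{X_\sigma}\cong L(\mu)|_{X_\sigma}$ for every $\mu$, because $\sigma$ acts trivially on $X_\sigma$.

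Suppose first that $X_\sigma$ is not Suzuki or Ree, so $s_0=p^{a}$ and $\sigma^\#=s_0\,\rho^{*}$, where $\rho^{*}$ is the permutation of the fundamental weights of order $d$ induced by $\sigma$. Iterating the two facts above gives $L(p^{ak}\lambda)|_{X_\sigma}\cong L((\rho^{*})^{-k}\lambda)|_{X_\sigma}$, since $p^{ak}\lambda=(\sigma^\#)^{k}\big((\rho^{*})^{-k}\lambda\big)$. The twisted Steinberg tensor-product theorem (see e.g. \cite[\S15]{malletesterman}, \cite{humphreys2006modular}) says, for $|S|$ large, that $L(\mu)|_{X_\sigma}\cong L(\nu)|_{X_\sigma}$ precisely when the base-$p$ digit sequences of $\mu$ and $\nu$ modulo $ad$ agree after a shift by a multiple of $a$ together with the corresponding power of $\rho^{*}$. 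Applying this to $\lambda$ (digit $\lambda$ in slot $0$) and $p^{f}\lambda$ (digit $\lambda$ in slot $f\bmod ad$) yields $V^{\phi^f}\cong V$ if and only if $a\mid f$ and $(\rho^{*})^{f/a}\lambda=\lambda$. Since the order of the Dynkin-diagram automorphism on a single dominant weight is $1$ or $d$ — clear for $d=2$, and for $d=3$ the automorphism cyclically permutes three fundamental weights and fixes the fourth, so an orbit has size $1$ or $3$ — we conclude $f=a$ if $\rho^{*}\lambda=\lambda$ and $f=ad$ otherwise; that is, $p^{f}=s_0$ or $p^{f}=s_0^{d}$.

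Now suppose $X_\sigma$ is Suzuki or Ree, so $d=2$, $s_0\notin\mathbf{Z}$ but $s_0^{2}=p^{2n+1}$, and $\sigma^{\#}$ interchanges the contributions of the long and short nodes while carrying a factor of $p$, with the net effect that on short-supported weights $\sigma^{\#2}$ is multiplication by $s_0^{2}$. Hence $L(p^{2n+1}\lambda)|_{X_\sigma}\cong L(\lambda)|_{X_\sigma}$, whereas for $0<k<2n+1$ the weight $p^{k}\lambda$ is again short-supported with all coefficients in $[0,s_0^{2})$ and distinct from those of $\lambda$, so by the pairwise non-equivalence of the $s_0^{r}$ short-supported modules recalled above, $L(p^{k}\lambda)|_{X_\sigma}\not\cong L(\lambda)|_{X_\sigma}$. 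Thus $f=2n+1$, i.e. $p^{f}=s_0^{2}=s_0^{d}$; the alternative $p^{f}=s_0$ is impossible since $s_0$ is not a power of $p$.

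The main obstacle is the correct invocation of the twisted Steinberg tensor-product theorem: one must confirm that $\mu\mapsto L(\mu)|_{X_\sigma}$ is injective on $p$-restricted weights — this is where the hypothesis that $|S|$ is large enters, ruling out degenerate cases such as solvable $X_\sigma$ — and that the only subtlety in passing from ``$f$ is a multiple of $a$'' to ``$f=a$ or $f=ad$'' is the elementary orbit-size dichotomy for the diagram automorphism. Everything else is bookkeeping with highest weights and base-$p$ expansions.
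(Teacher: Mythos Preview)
Your proof is correct and follows essentially the same strategy as the paper's: both determine $f$ by finding the least power of Frobenius under which $L(\lambda)|_{X_\sigma}$ is self-equivalent, via Steinberg's parametrization of irreducible $KX_\sigma$-modules. The paper quotes \cite[Proposition~5.4.4]{kleidmanliebeck} to obtain $f\mid ed$ and then shows $f\ge e$ by a coefficient-size argument (if every $c_jp^f<p^e$ then $p^f\lambda$ would be $s_0$-restricted and hence equal to $\lambda$); you instead compute directly that $f=e\cdot\mathrm{ord}_{\rho^*}(\lambda)$ via the digit--slot picture and then invoke the orbit-size dichotomy (orbits of the graph automorphism on a single dominant weight have size $1$ or $d$, since $d\in\{1,2,3\}$). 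Your version is slightly more self-contained and in fact yields $e\mid f$ rather than merely $f\ge e$ --- a harmless strengthening for $d\le 2$, but one that cleanly handles triality, where $f\mid 3e$ and $f\ge e$ alone would not exclude $f=3e/2$ for even $e$. Your statement of the equivalence criterion for $L(\mu)|_{X_\sigma}\cong L(\nu)|_{X_\sigma}$ is phrased informally, but the only instance you actually use (two $s_0$-restricted weights give isomorphic $X_\sigma$-modules iff they are equal) is precisely the content of the parametrization theorem. The Suzuki--Ree case is handled the same way in both arguments.
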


\begin{proof}
Write $s_0=p^e$. Assume first $X_\sigma$ is not Suzuki or Ree.
By \cite[Proposition 5.4.4]{kleidmanliebeck} (note that ``$q$'' in this reference corresponds to ``$s$'' here), 
we have $f \mid ed$, so we only need to show that $f\ge e$. Write $V=V(\lambda)$ where $\lambda= \sum_i c_i\lambda_i$ is the highest dominant weight of $V$, so $0\le c_i\le p-1$ for every $i$. As $X_\sigma$-modules, we have $V(\lambda) \cong V(\lambda)^{(f)}\cong V(p^f\lambda)$. We have $p^f\lambda = \sum_i p^fc_i\lambda_i$, and since $V(\lambda)$ and $V(p^f\lambda)$ are $X_\sigma$-equivalent, by \cite[Theorem 5.4.1]{kleidmanliebeck}, there must exist $c_j$ with $c_jp^f \ge p^e$; so
\[
p^e \le c_jp^f \le (p-1)p^f,
\] 
from which $f\ge e$, as desired. If $X_\sigma$ is Suzuki or Ree, then \cite[Proposition 5.4.4]{kleidmanliebeck} and the same calculation as above gives $f=2e$.
\end{proof}

The following is well-known.

\begin{lemma}
\label{l:weight_zero}
    Assume that $V$ has weight zero. Then every element of $X$ has eigenvalue $1$.
\end{lemma}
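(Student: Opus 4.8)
The plan is to combine the Jordan decomposition in the algebraic group $X$ with the observation that the zero weight space is a nonzero subspace on which every maximal torus conjugate to the fixed one acts trivially. Fix the maximal torus $T\le X$ with respect to which weights are computed, and let $V_0\le V$ denote the zero weight space; by hypothesis $V_0\neq 0$, and by definition $T$ acts trivially on $V_0$. The goal is to produce, for an arbitrary $x\in X$, a nonzero vector of $V$ fixed by $x$.

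First I would reduce to the case of a semisimple element. Given $x\in X$, write $x=su=us$ for its Jordan decomposition, with $s$ semisimple and $u$ unipotent. Under the rational representation $V$, the element $s$ acts semisimply and $u$ acts as a unipotent transformation (image of a semisimple, resp. unipotent, element under a morphism of algebraic groups is again semisimple, resp. unipotent). Suppose we know that $s$ has eigenvalue $1$ on $V$, and set $W:=\ker(s-1|_V)\neq 0$. Since $u$ commutes with $s$, the subspace $W$ is $u$-invariant, and $u|_W$ is a unipotent operator on a nonzero finite-dimensional space, so it has a nonzero fixed vector $w\in W$; then $xw=s(uw)=sw=w$. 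Hence it suffices to show that every semisimple element of $X$ has eigenvalue $1$ on $V$.

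It then remains to treat a semisimple $s\in X$. Such an $s$ lies in some maximal torus $T'$ of $X$, and since $X$ is connected, all maximal tori are conjugate, say $T'=gTg^{-1}$ for some $g\in X$. If $v\in V_0$ then $t'(gv)=gtg^{-1}\cdot gv=g(tv)=gv$ for $t'=gtg^{-1}\in T'$, so $T'$ — and in particular $s$ — acts trivially on the nonzero subspace $gV_0$. Thus $s$ has eigenvalue $1$ on $V$, which completes the argument. There is no real obstacle here: the only points needing (routine) care are that $s$ and $u$ remain semisimple and unipotent on $V$, that a unipotent operator on a nonzero finite-dimensional vector space over any field has a nonzero fixed point, and that maximal tori are conjugate — all standard facts about linear algebraic groups.
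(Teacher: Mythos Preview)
Your proof is correct and follows essentially the same approach as the paper: both use the Jordan decomposition $x=su$, observe that the semisimple part lies in a conjugate of the fixed maximal torus (hence fixes a conjugate of the zero weight space), and then note that the commuting unipotent part has a nonzero fixed vector on this eigenspace. The only cosmetic difference is that the paper phrases the conjugacy step as ``up to conjugation we may assume $s\in T$'' rather than explicitly conjugating $V_0$.
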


\begin{proof}
	Let $T$ be a maximal torus of $V$, so by assumption $\C_V(T)\neq 0$. Now let $g\in X$, and write $g=su$, with $s$ semisimple, $u$ unipotent, and $su=us$. Up to conjugation, we may assume $s\in T$. Since $u$ is a unipotent element normalizing $W:=\C_V(s)$, we have $0\neq \C_W(u)\le \C_V(g)$, which concludes the proof.
\end{proof}

I am grateful to Bob Guralnick for suggesting the proof of the following lemma. See \cite{guralnick_tiep_eigenvalue1} for similar results for algebraic groups and for semisimple elements, which however are not sufficient here. 
\begin{lemma}
\label{l:codimension_1}
    Let $\sigma$ be a Steinberg endomorphism of $X$ of level $s_0$, and let $V$ be an irreducible $KX$-representation. Assume that there exists a weight $\chi$ such that $\chi \sigma \in \mathbf Z \chi$. Then, $\alpha(X_\sigma, V)\ge 1/s_0 + O_{X,\chi}(s_0^{-3/2})$.
\end{lemma}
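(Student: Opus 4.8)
The plan is to construct inside the algebraic group $X$ a geometrically irreducible, $\sigma$-stable subvariety $Z$ of codimension one, every element of which has eigenvalue $1$ on $V$, and then to estimate $|Z\cap X_\sigma|$ by the Lang--Weil bound. First I would treat a degenerate case: if $0$ is a weight of $V$, then by \Cref{l:weight_zero} every element of $X$, hence of $X_\sigma$, has eigenvalue $1$, so $\alpha(X_\sigma,V)=1$ and there is nothing to prove. So assume from now on that $0$ is not a weight; in particular $\chi\neq 0$ and $V_\chi\neq 0$. I would fix a $\sigma$-stable pair $(B,T)$ of a Borel subgroup and a maximal torus of $X$ (which exists for every Steinberg endomorphism) and read all weights and the relation $\chi\sigma=n\chi$ (some $n\in\mathbf Z$) off the lattice $X^*(T)$. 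Note that $n\neq 0$, since $\sigma$ acts on $X^*(T)\otimes\mathbf Q$ as $s_0$ times a finite-order isometry, hence invertibly.

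The heart of the matter — and the one place where the hypothesis is used — is the claim that the codimension-one subtorus $S:=(\ker\chi)^\circ\le T$ is $\sigma$-stable. Indeed $\sigma(S)$ is a codimension-one subtorus of $\sigma(T)=T$, so $\sigma(S)=(\ker\psi)^\circ$ for the primitive character $\psi\in X^*(T)$ characterised by $(\psi\sigma)|_S=1$, i.e.\ by $\psi\sigma\in\mathbf Q\chi$. Combining this with $\chi\sigma=n\chi$ ($n\neq 0$) and the injectivity of the map $\mu\mapsto\mu\sigma$ on $X^*(T)\otimes\mathbf Q$ forces $\psi\in\mathbf Q\chi$, whence $\sigma(S)=(\ker\psi)^\circ=(\ker\chi)^\circ=S$. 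Consequently $L:=\C_X(S)$ is a $\sigma$-stable Levi subgroup, $B_L:=B\cap L$ a $\sigma$-stable Borel subgroup of $L$, and $U_L:=R_{\mathrm u}(B_L)$ a $\sigma$-stable unipotent subgroup; since $S$ is central in $L$ it commutes with $U_L$, so $M:=SU_L=S\times U_L$ is a $\sigma$-stable connected subgroup of $X$, and I set $Z:=\overline{\bigcup_{g\in X}gMg^{-1}}$.

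Next I would check that $Z$ has the stated properties. It is irreducible, being the closure of the image of $X\times M$ under $(g,m)\mapsto gmg^{-1}$, and it is $\sigma$-stable because $\sigma(M)=M$. Every element of $Z$ has eigenvalue $1$ on $V$: the fixed space $W:=V^S$ is a nonzero $L$-submodule of $V$ (it contains $V_\chi$), and for $m=su\in M$ with $s\in S$, $u\in U_L$ the operator $m$ acts on $W$ as $u|_W$, which is unipotent and so has eigenvalue $1$; hence $\det(\rho(m)-1)=0$, a closed and conjugation-invariant condition that therefore holds on all of $Z$. In particular $Z$ is contained in the proper closed subset $\{g:\det(\rho(g)-1)=0\}$ of $X$ (proper, as $0$ is not a weight), so $\dim Z\le\dim X-1$; conversely the standard dimension count for a conjugacy-closure gives $\dim Z=\dim M+\dim X-\dim\N_X(M)=\dim X-1$, using that $S$ is the unique maximal torus of $M$ (so $\N_X(M)$ lies in $\N_X(S)$, with $\N_X(S)^\circ=L$) and that $\N_L(M)^\circ=B_L$. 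Finally, since $Z$ is geometrically irreducible of dimension $\dim X-1$ and $\sigma$-stable, it is defined over the appropriate finite field, and the Lang--Weil estimate — in the form valid also for twisted groups and for the exotic Steinberg endomorphisms of Suzuki and Ree type — gives $|Z\cap X_\sigma|=s_0^{\dim X-1}+O_{X,\chi}(s_0^{\dim X-3/2})$, with implied constant depending only on $X$ and $\chi$ since $Z$ was built from $X$ and $S=(\ker\chi)^\circ$ alone. Together with $|X_\sigma|=s_0^{\dim X}\bigl(1+O_X(s_0^{-1})\bigr)$ and the fact that every element of $Z\cap X_\sigma$ has eigenvalue $1$ on $V$, this yields
\[
\alpha(X_\sigma,V)\ \ge\ \frac{|Z\cap X_\sigma|}{|X_\sigma|}\ =\ \frac1{s_0}+O_{X,\chi}\bigl(s_0^{-3/2}\bigr),
\]
as required.

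I expect the main obstacle to be exactly the $\sigma$-stability of $S$ (equivalently of the Levi $L$): this is the sole point at which the hypothesis $\chi\sigma\in\mathbf Z\chi$ is indispensable, and it is what makes the entire construction $\sigma$-equivariant. After that, the ingredients — the shape of $\C_X(S)$, the normaliser and dimension computations, the identification of $V^S$ as an $L$-submodule, and the inclusion of $Z$ in the eigenvalue-$1$ locus — are routine structure theory; the only genuinely non-elementary input is the Lang--Weil estimate, which must be applied with some care in the twisted and Suzuki/Ree cases (cf.\ the remark in the acknowledgements).
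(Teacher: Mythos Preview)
Your proof is correct and follows the same overall strategy as the paper --- build a $\sigma$-stable irreducible codimension-one subvariety of the eigenvalue-$1$ locus and apply Lang--Weil --- but the construction you give is organised differently and, in one respect, more cleanly.

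The paper splits into two cases according to whether $S^\circ$ meets the regular semisimple locus $\mathcal R$. If it does (equivalently $L=T$), the paper takes $Y=\overline{\{s^g:g\in X,\ s\in S^\circ\}}$, exactly your $Z$ with $U_L=1$. If not, the paper first fixes a single regular unipotent $u\in L_0=[L^\circ,L^\circ]$ and shows $\overline{\{(su)^g\}}$ has codimension one, and then, because this set need not be $\sigma$-stable, enlarges it to the closure of all $(su')^g$ with $u'$ unipotent in $L_0$ in order to recover $\sigma$-stability. Your choice $M=S\times U_L$, with $U_L$ the unipotent radical of the $\sigma$-stable Borel $B_L=B\cap L$, handles both cases at once and is $\sigma$-stable from the outset, so no enlargement step is needed. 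The eigenvalue-$1$ verification via $V^S$ is the same idea as the paper's, phrased slightly differently.

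One point that deserves a word of caution: the formula you invoke, $\dim Z=\dim X+\dim M-\dim \N_X(M)$, is not a standard identity for conjugacy-closures in general; what is true is $\dim Z=\dim X+\dim M-(\text{generic fibre dimension of }X\times M\to Z)$. Your computation of $\dim \N_X(M)=\dim B_L$ is fine, but you should also note why the generic fibre genuinely has that dimension. For a generic $y=su$ with $\C_X(s)=L$ and $u$ regular unipotent in $L$, the condition $g^{-1}sg\in S$ forces $g\in \N_X(L)$ (finite over $L$), and then $g^{-1}ug\in U_L$ forces $g$ into a Borel of $L$ because a regular unipotent element lies in a \emph{unique} Borel. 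This is implicit in your sketch but is the substantive step in the dimension count; the paper sidesteps it by reading off the fibre dimension directly from centralizer/normalizer dimensions in each of its two cases.
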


\begin{proof}
Note that $\{\det(x-1)=0\}$ is a subvariety of $X$ of codimension at most $1$. If it is equal to $X$ then we are clearly done; assume then this is not the case. We will show that there exists a $\sigma$-stable irreducible component $Y$ of $\{\det(x-1)=0\}$ of codimension $1$ in $X$. The Lang--Weyl estimates (which hold for every $\sigma$; see \cite[Section 2]{larsen2024products} and the references therein for the case of Suzuki or Ree groups) will then imply $|Y_\sigma|/|X_\sigma| = s_0^{-1} + O(s_0^{-3/2})$, where the implied constant depends on $Y$. We will see that $Y$ depends on $X$ and $\chi$, so the proof will be concluded.

Let $T$ be a $\sigma$-stable maximal torus, such that $\chi\colon T \to \G_m$, and let $S:=\Ker(\chi)$. By \Cref{l:weight_zero}, $\chi\neq 0$ and so $S$ is a closed subgroup of $T$ of codimension $1$. Note that, since $\chi \sigma \in \mathbf Z \chi$, $S$ is $\sigma$-stable. Denote by $\mathcal R$ the set of regular semisimple elements of $X$, which is $\sigma$-stable. It is known that $\mathcal R$ is open (and dense) in $X$.  

    Assume first that $U:=S^\circ \cap \mathcal R \neq \varnothing$, so $U$ is open and dense in $S^\circ$. Now consider the morphism $f\colon X\times S^\circ \to X$ given by $f(g,s) = s^g$. Then $\text{Im}(f)$ is $\sigma$-stable, and consists of elements having eigenvalue $1$. Next, note that there is a dense subset $D$ of $\text{Im}(f)$  consisting of elements whose fiber has dimension $\dim(T)=\dim(S)+1$. (For example, we can take  $D=\mathcal R \cap \text{Im}(f)$, and we use that $T$ has finite index in $\N_X(T)$.)
     In particular, $\dim(\text{Im}(f))=\dim(X)-1$.
  Setting $Y:=\overline{\text{Im}(f)}$, and noting that $Y$ depends on $X$ and $\chi$, proves the lemma in this case.

    Assume finally that $S^\circ \cap \mathcal R=\varnothing$. Then $L:=\C_X(S^\circ)$ is a reductive group which is not a torus.
    Then, there exists a regular unipotent element $u\in L_0:= [L^\circ,L^\circ]$. 
    Consider then the morphism $f\colon X\times S^\circ \to X$ given by $f(g,s) = (su)^g$. As in the previous case, we see that $\text{Im}(f)$ has codimension $1$ in $X$.
    (In this case, we can choose $D\subseteq \text{Im}(f)$ as the set of elements conjugate to $su$ where $s\in S^\circ$ is such that $\C_X(s)=L$; we recall that $L$ has finite index in $\N_X(L)$, see \cite[Proposition 13.8]{malletesterman}.)
     In particular, letting $Y$ be the closure of the set of elements conjugate to $su'$ where $s\in S^\circ$ and $u'\in L_0$ unipotent, we have that $Y\supseteq \text{Im}(f)$ has codimension $1$, is $\sigma$-stable, and depends only on $X$ and $\chi$, so we are done.
\end{proof}

\subsection{Natural module for classical groups} 
\label{subsec:natural_module}
In this subsection $V$ is the natural module for a classical group $G$. 
For orthogonal groups we assume $n$ even, since for $n$ odd every element of $\SO_n(q)$ has eigenvalue $1$.

For $G=\GL_n(s), \GU_n(s), \Sp_{2n}(s), \Or^\pm_{2n}(s)$, Neumann--Praeger \cite{neumannpraeger} gave estimates for the proportion of elements of $G$ having a given eigenvalue, and showed that its limit as $n\to \infty$ with $s$ fixed exists (and, for example, in non-orthogonal groups it is  $1/s+O(1/s^2)$). See also \cite{fulman2024probabilistic} for recent related results.

We prove an easy result that holds in each coset of $[G,G]$ in $G$. Our result and proof are similar to \cite[Proposition 3.3]{guralnick_tiep_eigenvalue1}, and in particular are an application of the inequality
\begin{equation}
	\label{eq:inequality}
\Big| \bigcup_{g\in G} H^g \Big| \le \frac{|G|}{|\N_G(H):H|}
\end{equation}
for $H\le G$. In \cite{guralnick_tiep_eigenvalue1}, in fact, the authors are interested in semisimple elements with eigenvalue $1$, and  nontrivial estimates for the proportion of semisimple elements of $G$ are required. Here we do not need to focus on semisimple elements, which makes the analysis easier and in some cases allows to handle some more value of $s$ (for $s\ge 3$ the bounds below are meaningful). We give exact bounds (rather than asymptotic), as the proof offers them naturally and they might be useful.

\begin{lemma}
\label{l:natural_module}
    \begin{enumerate}
        \item If 
        $\SL_n(s) \le G\le \GL_n(s)$ with $n\ge 2$ then
        \[
       \frac{1 - \frac{1}{s-1}}{s-1}\le \alpha(G,V) \le \frac{1}{s-1}.
        \]
        \item If $G=\Sp_{2n}(s)$ with $n\ge 2$ then
        \[
       \frac{1 - \frac{1}{s-1}}{s} \le \alpha(G,V) \le \frac{1}{s-1}.
        \] 
        \item  If  $\SU_n(s)\le G\le \GU_n(s)$ with $n\ge 3$ then
        \[
       \frac{1 - \frac{s}{s^2-1}}{s+1} \le \alpha(G,V) \le \frac{s}{s^2-1}.
        \]
        \item  If $G=\Omega^\pm_{2n}(s)$ with $n\ge 4$ and $s$ odd then
        \[
\frac{s(1 - \frac{2s}{s^2-1})}{s^2-1}
 \le \alpha(G,V) \le \frac{2s}{s^2-1}.
        \]
    \end{enumerate}
\end{lemma}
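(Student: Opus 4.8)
The plan is to estimate, in each case, the number of elements of $G$ having eigenvalue $1$ on $V$, i.e.\ those lying in the union of point stabilizers $G_v = \{g\in G : gv = v\}$ as $v$ ranges over nonzero vectors (in the orthogonal and unitary cases, one restricts to an appropriate orbit of vectors and multiplies by the number of such orbits). The upper bound follows directly from the inequality \eqref{eq:inequality}: if $H = G_v$ for a suitable vector $v$, then all conjugates $H^g$ are point stabilizers, their union is exactly the set of elements with eigenvalue $1$ on the $G$-orbit of $v$, and $|\N_G(H):H|$ is the size of that orbit; summing over the (few) orbit types gives the stated fractions. Concretely, in case (1) a single orbit of $s^n-1$ nonzero vectors gives $|\N_G(H):H| = s^n - 1$ and $|H| = |G|/(s^n-1)$, so $\alpha(G,V)\le 1/(s^n-1)\cdot$ (number of orbits)$\le 1/(s-1)$ after the standard geometric-series bound; the symplectic, unitary, and orthogonal cases are the same computation with the vector orbits replaced by the orbits of isotropic/non-isotropic (or non-singular) vectors, whose sizes are the familiar quantities $\asymp s^{2n-1}$, $s^{2n-1}$, $s^{2n-1}$ respectively, producing the denominators $s-1$, $s+1$, $s^2-1$ and the extra factor $s$ or $2s$ from the number of relevant orbits.

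For the lower bound, the plan is an inclusion--exclusion (or Bonferroni) argument: the set of elements with eigenvalue $1$ contains $\bigcup_{v} G_v$ over a single orbit $\mathcal O$ of vectors, and
\[
\Big|\bigcup_{v\in\mathcal O} G_v\Big| \;\ge\; \sum_{v\in\mathcal O}|G_v| \;-\; \sum_{\{v,w\}\subseteq\mathcal O}|G_v\cap G_w|.
\]
The first sum is $|\mathcal O|\cdot |G|/|\mathcal O| = |G|$ times the number of orbits used, and the second is bounded by noting that $G_v\cap G_w \le G_{\langle v,w\rangle}$ is contained in a stabilizer of a $2$-space (or a flag), whose index in $G$ is $\asymp |\mathcal O|^2/(\text{small factor})$; carrying out the count of pairs and dividing by $|G|$ yields the subtracted term $\frac{1}{s-1}$ (resp.\ $\frac{s}{s^2-1}$, $\frac{2s}{s^2-1}$) inside the parentheses in the statement. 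I expect the bookkeeping here to be the main obstacle: one must correctly identify which pairs $\{v,w\}$ contribute (e.g.\ in the symplectic case whether $v,w$ span a hyperbolic or a degenerate $2$-space, and in the orthogonal case singular versus non-singular spans), compute the corresponding stabilizer indices exactly, and check that the resulting bound is clean enough to match the stated constants — this is where the case division $\GL/\Sp/\GU/\Omega$ really bites.

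Two further points need attention. First, the reduction to a single orbit (or a controlled list of orbits) of vectors: for $\GL_n$ there is one nonzero-vector orbit, for $\Sp_{2n}$ also one, but for $\GU_n$ one has isotropic and non-isotropic vectors and for $\Omega^\pm_{2n}$ singular and non-singular vectors, so the factor $s$ in case (3) and $2s$ in case (4) comes from summing a geometric-type series over these orbit families; I would simply bound that sum crudely, since only the leading term matters. Second, the coset generality ``$\SL_n(s)\le G\le \GL_n(s)$'' and ``$\SU_n(s)\le G\le \GU_n(s)$'' — here the point is that the set of elements with eigenvalue $1$ is a union of $G$-conjugacy classes and the counts of point stabilizers scale proportionally, so the proportion $\alpha(G,V)$ is unchanged (or only improved) when passing between $\SL_n(s)$ and $\GL_n(s)$; alternatively, one argues directly in each coset of $[G,G]$ that a positive proportion fixes a nonzero vector, exactly as in \cite[Proposition~3.3]{guralnick_tiep_eigenvalue1}. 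With these reductions in place the four inequalities follow from the two displayed estimates above together with the exact orbit sizes, and I would present the symplectic case in full detail as the template and indicate the (entirely parallel) modifications for the unitary and orthogonal cases.
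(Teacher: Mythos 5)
Your upper-bound argument contains a concrete error. Taking $H=G_v$, the point stabiliser of a nonzero vector, is the right choice, and it is the same choice made in the paper (and in \cite[Remark 3.4]{guralnick_tiep_eigenvalue1}). But you then write ``$|\N_G(H):H|$ is the size of that orbit,'' i.e.\ $s^n-1$ in case (1). That confuses $|G:H|$ with $|\N_G(H):H|$. In fact $\N_G(G_v)$ is the (set-wise) stabiliser of the line $\langle v\rangle$, and $|\N_G(G_v):G_v|=s-1$, coming from the scalar action on $\langle v\rangle$. The bound you deduce, $\alpha(G,V)\le 1/(s^n-1)$, is simply false (already for $\GL_2(2)$, where $\alpha=2/3$), and the subsequent ``geometric-series'' step that is supposed to rescue it to $1/(s-1)$ does not parse. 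With the correct $|\N_G(H):H|=s-1$ the bound drops out directly; the analogous corrections give $s+1$ and, more subtly, the two contributions in the orthogonal case.

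The orthogonal case (4) is where your ``sum over orbit types'' plan genuinely fails as stated. As the paper notes just after the lemma, the centraliser of a nonsingular vector in $\Omega^\pm_{2n}(s)$ is a \emph{maximal} subgroup, so $|\N_G(H):H|=1$ and \eqref{eq:inequality} gives nothing for that orbit; you cannot simply sum over singular and nonsingular vector orbits. The paper gets around this by an argument on the fixed space: if $g$ has eigenvalue $1$ but fixes no singular vector, one shows (using $\det g=1$ and the parity of the relevant dimensions) that $g$ centralises a nondegenerate $2$-space of minus type, and one applies \eqref{eq:inequality} to the stabiliser of such a $2$-space instead. This extra reduction is essential and your outline does not anticipate it.

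For the lower bounds your route is genuinely different. You propose Bonferroni on $\bigcup_v G_v$ and accept that the pair-count $\sum|G_v\cap G_w|$ will be the sticking point. The paper avoids pair-counting entirely: it counts elements $g$ that fix a vector $v$, stabilise a fixed complement $W$, and act on $W$ \emph{without} eigenvalue $1$; such $g$ satisfy $[V,g]=W$, so the complement is recovered from $g$ and there is no overcounting beyond the $s-1$ scalar choices in $\langle v\rangle$. The fraction of such $g$ in the stabiliser is then bounded below using the upper bound just proved (applied to $W$). This is tighter and cleaner than inclusion--exclusion, and the same template transfers verbatim to the symplectic, unitary and orthogonal cases by replacing ``fixes $v$, stabilises a complement'' with the appropriate pair (regular unipotent on a hyperbolic $2$-space / fixes a nondegenerate vector / acts trivially on a nondegenerate $2$-space, combined with ``no eigenvalue $1$ on the perp''). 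Your Bonferroni plan is not unreasonable for case (1), but matching the stated constants uniformly across all four families would require exactly the bookkeeping you flag as the obstacle, and you would still need a separate idea for the orthogonal upper bound.

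Finally, on the reduction from $\SL_n\le G\le \GL_n$ (and likewise $\SU\le G\le \GU$): your proposed invocation of ``proportion unchanged under passing between $\SL$ and $\GL$'' is not quite right — $\alpha$ can change — but it does not matter, because the whole argument (\eqref{eq:inequality} for the upper bound, and the complement-counting for the lower bound) works directly inside any intermediate $G$, as the paper's parenthetical remark points out.
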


\begin{proof}
	
All proofs are similar; we first prove the upper bound, and deduce the lower bound. The upper bounds in (1)--(3) are contained in \cite[Remark 3.4]{guralnick_tiep_eigenvalue1}. (This is an easy calculation using \eqref{eq:inequality}, applied to the centralizer $H$ of a $1$-space, singular or nondegenerate; in (1) and (3) the bounds are stated only for $\SL_n(s)$ and $\SU_n(s)$ but the  same proof works for any $G$ as in the statement.)
For the upper bound in (4), note that if an element $g$ of $\Omega^\pm_{2n}(s)$, $s$ odd, has eigenvalue $1$, then it fixes either a singular vector, or it centralizes a nondegenerate $2$-space of minus type. (In order to see this, assume that $vg=v$ with $v$ nonsingular. Letting $W=\gen v^\perp$, since $\dim(W)$ is odd and $\det(g)=1$, there exists $0\neq w\in W$ with $wg=w$. If $w$ is singular we are done; otherwise $\gen{v,w}$ is nondegenerate. If it is of minus type we are done again, and otherwise it contains a singular vector, done for the third and final time.) 
The  contributions of the two subgroups in \eqref{eq:inequality} are, respectively, $1/(s-1)$ and $1/(2(s+1))$; bounding $1/(2(s+1)) < 1/(s+1)$ and summing the two contributions we get the desired inequality.

Now let us prove the lower bounds. In (1), for $n=2$ we have $\alpha(G,V)\ge 1/s$,
so assume $n\ge 3$. We count elements that centralize a vector $v$, stabilize a complement and act without eigenvalue $1$ there. Each such element acts without eigenvalue $1$ on exactly one hyperplane (namely $[V,g]$), hence there are no repetitions. Using the upper bound that we just proved, the statement follows from an easy calculation.

The bounds in (2), (3) and (4) are proved in the same way. In (2), we count elements that are regular unipotent on a nondegenerate $2$-space and act without eigenvalue $1$ on the perpendicular complement. (The proportion of regular unipotent elements of $\Sp_2(s)$ is $1/s$.) In (3) we count elements that fix a nondegenerate vector and act without eigenvalue $1$ on the perpendicular complement. In (4) we count elements acting trivially on a nondegenerate $2$-space (of any type) and without eigenvalue $1$ on the orthogonal complement. 
\end{proof}

The previous result is empty for linear and symplectic groups when $s=2$, and does not address orthogonal groups in even characteristic. (This latter exclusion depends on the fact that the centralizer $H$ of a nonsingular vector is a maximal subgroup, and so \eqref{eq:inequality} is useless.) For these cases, we record estimates that can be extracted from \cite{neumannpraeger}.

\begin{lemma}
\label{l:natural_module_q=2}
\begin{enumerate}
    \item For $G=\SL_n(2)$ and $\Sp_{2n}(2)$ we have $\alpha(G,V)> 0.5$.
    \item For $G=\Omega^{\pm}_{2n}(s)$, $s$ even, $n\ge 4$, we have $\alpha(G,V)> 1/s - 1/s^2 - 1/s^{16}$. 
   \end{enumerate}
\end{lemma}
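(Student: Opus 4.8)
```latex
The plan is to extract the required estimates directly from the work of Neumann--Praeger \cite{neumannpraeger}, which computes (asymptotically, and with explicit error terms) the proportion of elements of each classical group over $\F_s$ having a prescribed eigenvalue. For the eigenvalue $1$, their results give a limiting proportion $P(s)$ as $n\to\infty$ together with an error bound of the form $|\alpha(G,V) - P(s)| \le \varepsilon(n,s)$, where $\varepsilon(n,s)$ decays geometrically in $n$. So the proof is essentially bookkeeping: specialize their formulas to $s=2$ for part (1) and to even $s$ for part (2), read off the constant term, and bound the error crudely for the smallest admissible $n$ (here $n\ge 2$ for linear/symplectic, $n\ge 4$ for orthogonal of rank $2n\ge 8$).

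For part (1), I would invoke the Neumann--Praeger estimate for $\GL_n(2)$ and for $\Sp_{2n}(2)$: as $n\to\infty$ the proportion of elements with eigenvalue $1$ tends to a constant $c$ which in both cases exceeds $0.5$ (for $\GL$ this limiting proportion is the infinite product $1 - \prod_{i\ge 1}(1 - 2^{-i})\cdot(\text{correction})$, which numerically is around $0.71$; for $\Sp_{2n}(2)$ the analogous limit is also bounded below by $0.5$). One then checks that the Neumann--Praeger error term is small enough that $\alpha(G,V) > 0.5$ already for all $n\ge 2$; for the finitely many tiny cases ($n=2,3$ perhaps) one verifies the bound directly by a finite computation or by the explicit finite formulas in \cite{neumannpraeger}. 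For part (2), the relevant Neumann--Praeger result for $\Omega^\pm_{2n}(s)$ with $s$ even gives a limiting proportion of $1/s + O(1/s^2)$, and, crucially, they provide an explicit error term; extracting it yields the bound $\alpha(G,V) > 1/s - 1/s^2 - 1/s^{16}$ for $n\ge 4$, where the $1/s^{16}$ absorbs both the tail of the limiting expansion and the convergence error at $n=4$.

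The main obstacle is purely one of reference-chasing: \cite{neumannpraeger} states its results in terms of generating functions and cycle-index-type manipulations, and one must be careful about (a) which group ($\GL$ vs.\ $\SL$, $\mathrm{GO}$ vs.\ $\Omega$, plus/minus types) the stated proportion refers to, since passing between a group and its derived subgroup or to a fixed coset can shift the proportion, and (b) pinning down the explicit numerical value of the error term at small $n$ so that the clean inequalities in the statement actually hold and are not merely asymptotic. In the orthogonal even-characteristic case there is the added subtlety that the centralizer of a nonsingular vector is not a proper parabolic-type subgroup, which is exactly why \eqref{eq:inequality} failed in \Cref{l:natural_module} and why we must fall back on \cite{neumannpraeger} here; one should double-check that the Neumann--Praeger framework does cover $\Omega^\pm_{2n}(s)$ (not just $\mathrm O^\pm$) in even characteristic, or else deduce the $\Omega$ statement from the $\mathrm O$ statement by an index-$2$ argument.

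Since the two bounds are weak (we only need $0.5$ and a one-sided estimate that is generous by a factor like $s^{14}$), there is a lot of slack, so I would not attempt to optimize: a crude bound on the Neumann--Praeger error, combined with a direct check of the handful of small cases, suffices. The proof will therefore be short, consisting of a citation, a specialization of the cited formula, and a remark that the error term is dominated by the slack built into the stated inequalities.
```
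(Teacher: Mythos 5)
Your proposal is essentially the same as the paper's: both cite Neumann--Praeger, read off the limiting proportion of elements with eigenvalue $1$, and bound the error term to conclude. The paper's proof of part (1) for $\SL_n(2)$ does exactly the specialization you describe, quoting \cite[Theorem 8.1]{neumannpraeger} to get $|1-\alpha(G,V)-G(2,1)|\le c(2)\cdot 2^{-6}$, bounding $c(2)<3.5$ via \cite[Lemma 2.2]{neumannpraeger}, and upper-bounding the limit $G(2,1)<0.33$ via \cite[Theorem 4.4]{neumannpraeger}, giving $\alpha>0.61$; the symplectic case and part (2) are then declared analogous, with details omitted -- precisely the bookkeeping you anticipated.
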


\begin{proof}
(1) Let us start from $G=\SL_n(2)$. By \cite[Theorem 8.1]{neumannpraeger} we get that
    \[
    |1-\alpha(G,V)-G(2,1)|\le c(2) \cdot 2^{-6},
    \]
   where $G(2,1)$ and $c(2)$ are defined in that paper. We have $c(2)<3.5$ by \cite[Lemma 2.2]{neumannpraeger}, and so the error term is at most $0.06$. Moreover, $G(2,1)$ is estimated in \cite[Theorem 4.4]{neumannpraeger}. Looking at the proof, we see that
   \[
   \log(G(2,1)) = -1 - \sum_{k\ge 2}\left(\sum_{j\ge 2; j\mid k} \frac{1}{j}\right) 2^{-k}.
   \]
   In order to upper bound $\log(G(2,1))$, we lower bound the sum considering only the term $k=2$
   and get $\log(G(2,1))< -1 - 1/8$, so $G(2,1) < e^{-1-1/8} <0.33$ and therefore $1-\alpha(G,V) < 0.39$. 
The case of $\Sp_{2n}(2)$, as well as (2), are very similar, and we skip the details.
\end{proof}

\subsection{\Cref{prop:combination} in defining characteristic}
\label{subsec:proof}

We can now prove \Cref{prop:combination} for quasisimple groups $G\le \GL_d(q)$ in defining characteristic, which is the remaining case. We will do this in two lemmas: \Cref{l:p_restricted_bound,l:exceptions}. In \Cref{l:p_restricted_bound}, we will prove a slightly stronger bound in most cases. Recall that $d\ge 3$. 

\begin{lemma}
\label{l:p_restricted_bound}
 We have $\alpha(G, V)\gg q^{-d/3}$, and moreover either $\alpha(G, V)\gg q|\Out(S)|\log(q) q^{-d/3}$, or $G$ and $d$ appear in \Cref{table:exceptions}.
\end{lemma}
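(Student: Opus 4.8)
The plan is to dichotomize on the size of the dimension $d$ relative to the rank $r$ of the algebraic group $X$: a crude count of regular unipotent elements handles the large-dimensional range, and the structure theory of small-dimensional modules handles the rest.

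For the crude bound, note that $\alpha(G,V)=\alpha(X_\sigma,V)$ since $Z$ acts trivially on $V$, and that every $p$-element of $X_\sigma$ acts on $V$ as a unipotent matrix and so has eigenvalue $1$; in particular the regular unipotent elements contribute, and by \Cref{p: regular_unipotent} we get $\alpha(G,V)\ge s_0^{-r}$. We may assume $\F_q$ is the minimal field of definition of $V$, since enlarging $q$ only weakens the two desired inequalities, so that $q\ge s_0$ (and $q\ge s_0^2$ for the Suzuki and Ree groups), whence $\alpha(G,V)\ge q^{-r}$. If $d\ge 3r$ this already gives $\alpha(G,V)\ge q^{-d/3}$, the first assertion; and since $|\Out(S)|\ll d\log q$, the strengthened bound reduces here to $q^{d/3-r-1}\gg d\log^2 q$, which holds once $d$ exceeds $3r$ by a margin that grows (mildly) with $r$. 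It therefore remains to treat the pairs with $d<3r+O(1)$.

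In that range $V$ must be $p$-restricted: by Steinberg's tensor product theorem every irreducible is a tensor product of Frobenius twists of $p$-restricted modules, $\alpha$ is unchanged by a Frobenius twist (over a field of characteristic $p$, a $p^i$-th power of an eigenvalue equals $1$ only if the eigenvalue did), and since a nontrivial $p$-restricted module has dimension $\ge r+1$, a genuine tensor product has dimension $\ge (r+1)^2>3r+O(1)$ once $r$ exceeds an absolute constant; the finitely many $X$ of small rank are handled by direct inspection. So assume $V$ is $p$-restricted, and run through L\"ubeck's classification of small-dimensional $p$-restricted modules. Each such $V$ falls into one of: (a) $V$ has a zero weight, so $\alpha(G,V)=1$ by \Cref{l:weight_zero}; (b) $V$ is the natural module of a classical group (or its dual), so $\alpha(G,V)\gg s^{-1}$ by \Cref{l:natural_module} or \Cref{l:natural_module_q=2} --- this is the only possibility when $r$ is large; (c) $\sigma$ is untwisted, so every weight $\chi$ of $V$ satisfies $\chi\sigma=s_0\chi\in\mathbf{Z}\chi$ and \Cref{l:codimension_1} gives $\alpha(G,V)\gg s_0^{-1}=q^{-1}$ (here $q=s_0$ by \Cref{l:field_p_restricted}); (d) $\sigma$ is twisted but $V$ still has a weight $\chi$ with $\chi\sigma\in\mathbf{Z}\chi$ --- typically one fixed by the graph automorphism, as for $\wedge^2$ of the natural $\SU_n$-module --- giving $\alpha(G,V)\gg q^{-1/2}$ again by \Cref{l:codimension_1}, or else a module where one simply uses the crude bound, as for the $8$-dimensional module of ${}^3D_4$; or (e) a short explicit list of remaining pairs $(G,d)$, which go into \Cref{table:exceptions}. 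In all of (a)--(d) one has $\alpha(G,V)\gg q^{-1}$, so the first assertion is immediate from $d\ge 3$, and the strengthened assertion reduces to $q^{d/3-2}\gg d\log^2 q$, which holds once $d$ is larger than an absolute constant; the finitely many families of small-dimensional pairs $(G,d)$ for which it fails --- essentially the natural modules of the classical groups of small dimension, together with a handful of other low-dimensional $p$-restricted modules --- are exactly those recorded in \Cref{table:exceptions}.

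The most delicate part is the middle step: systematically going through L\"ubeck's tables of low-dimensional $p$-restricted representations and, for each family, correctly deciding between "has a zero weight", "has a $\sigma$-fixed weight line", "is a natural module", and "is an exception", and then, for the exceptional families, pinning down the exact range of $d$ and of the underlying group for which $\alpha(G,V)\gg q|\Out(S)|\log(q)q^{-d/3}$ genuinely fails, so that \Cref{table:exceptions} is both correct and as small as possible. A secondary subtlety is the uniform bookkeeping for the Suzuki, Ree and triality groups, where the relation between $s_0$, the field of definition $q$ and $|\Out(S)|$ must be tracked carefully; but for these the crude bound $\alpha(G,V)\ge s_0^{-r}$ --- or, where available, a zero weight, as for the $7$-dimensional module of ${}^2G_2$ --- already closes the gap once $|S|$ is large.
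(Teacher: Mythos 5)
Your Case 1 machinery (crude unipotent count for $d \gtrsim 3r$, then Lübeck/Liebeck classification plus \Cref{l:natural_module}, \Cref{l:natural_module_q=2}, \Cref{l:weight_zero}, \Cref{l:codimension_1} for small $d$) matches the paper's approach. But there is a genuine gap in your reduction: you assert that after passing to the minimal field of definition one has $q\ge s_0$. This is exactly what \Cref{l:field_p_restricted} gives for $p$-restricted modules (supported on short roots in the Suzuki/Ree cases), but it is \emph{false} in general. The standard counterexamples are tensor products of Frobenius twists of a single $p$-restricted module: for instance $X_\sigma=\SL_3(p^2)$ and $V=W\otimes W^{(p)}$ with $W$ the natural module has minimal field $\F_p$, so $q=p<s_0=p^2$; likewise $\SL_2(p^2)$ with $W\otimes W^{(p)}$ has $q=p<s_0=p^2$. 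In these cases the crude count of regular unipotent elements gives only $\alpha\ge s_0^{-r}$, which can be strictly smaller than $q^{-d/3}$ (e.g.\ in the $\SL_3(p^2)$ example, $s_0^{-2}=p^{-4}<p^{-3}=q^{-d/3}$). Your dimension bound $(r+1)^2>3r+O(1)$ does not save you here, because it compares $d$ to $r$ but the crude count is over $s_0$, not over $q$; the inequality you actually need is $\dim(M)^m\ge 3mr$ after accounting for $q=s_0^{1/m}$, which fails for small $r$, and even for large $r$ it fails to produce the stronger bound with the $q|\Out(S)|\log q$ factor.

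The paper handles these non-$p$-restricted modules with a dedicated Case 2 ($q<s$): it writes $V\otimes K$ as a tensor product of $m\ge e/f$ Frobenius twists of a $p$-restricted module $M$ defined over $\F_s$, observes $\alpha(G,V)\ge\alpha(G,M)$ (a fixed vector of $M$ tensors up to a fixed vector of $V$) and $|V|\ge s^{\dim(M)^2/2}$, and then invokes Case 1 for $M$. This is a genuinely different argument from the crude unipotent count and cannot be dismissed as ``direct inspection for small rank.'' Your sketch has no analogue of this step, so the non-$p$-restricted case is not actually covered; you would need to either prove a lower bound on $q$ in terms of $s_0$ that is simply not true, or else insert the tensor-decomposition argument. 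The remaining parts of your outline — the split into (a) zero weight, (b) natural module, (c)--(d) $\sigma$-fixed weight line via \Cref{l:codimension_1}, (e) exceptions — are the right skeleton for the $p$-restricted case and agree with the paper, though of course the table-building is left as a hand-wave on both sides of the comparison.
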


\begin{proof}

\textbf{Case 1:} $q\ge s$. 
Recall that by Steinberg's twisted tensor product theorem, if $V$ is not $p$-restricted then
\begin{equation}
\label{eq:twisted}
d\ge d_0^2
\end{equation}
where $d_0$ is the dimension of some $p$-restricted module.

Assume first the rank $r$ is at least $100$.  We have $q|\Out(S)|\log(q)\le q^4$.
By \Cref{l: unipotent_covering,p: regular_unipotent}, the proportion of unipotent elements in $G$ is at least $1/s_0^r \ge 1/q^r$, where $s_0=s$, or $G$ is Suzuki or Ree and $s_0=s^{1/2}$. Since unipotent elements have eigenvalue $1$, we are done if $d/3-4\ge r$, which is to say, $d\ge 3r+12$. Assume then this is not the case; by \cite[Theorem 1.1]{liebeck_ordermax}, $V$ is (a Frobenius twist or the dual of) the natural module. Note that $\alpha(G,V)$ is unchanged by taking duals or Frobenius twists,
so we may assume $V$ is the natural module. Then from \Cref{l:natural_module,l:natural_module_q=2}, $\alpha(G,V)\gg 1/s \gg 1/q$ and we are done.

Assume then $r< 100$, so we may assume $q$ is large and therefore $q|\Out(S)|\log(q) \le q^{1.1}$. In particular, counting regular unipotent elements we are done if $d/3-1.1\ge r$, which is to say,
\begin{equation}
\label{eq:enough_strong}
d \ge 3r+3.3.
\end{equation}
For the cases in \Cref{table:exceptions}, we are done provided
\begin{equation}
\label{eq:enough}
d \ge 3r.
\end{equation}

Assume first that $G$ is exceptional. If $G= G_2(s)$
and $d\ge 10$, we are done by \Cref{eq:enough_strong}. Otherwise, by \cite[Appendix A.49]{lubeck_smallrepn} and \eqref{eq:twisted},
if $G= G_2(s)$ then $d=6$ or $7$. If $d=7$ then $s$ is odd and the representation has weight zero (see \cite[Corollary 2.5]{guralnick_tiep_eigenvalue1}, or just note that $G_2(s)$ embeds into $\SO_7(s)$), so we are done by \Cref{l:weight_zero}. Assume then $s$ is even, so $d=6$. The case $s=q$ appears in \Cref{table:exceptions}, and \Cref{eq:enough} holds. In the case $s<q$, by \Cref{l:codimension_1} we have $\alpha(G,V) \gg 1/s \gg 1/q^{2-1.1}$ and we are done.

Assume now $G$ is exceptional and not $G_2(s)$. If $G$ is not ${}^3D_4(s), {}^2B_2(s)$, then \Cref{eq:enough_strong} follows from \cite[Table 5.4.B]{kleidmanliebeck}. If $G={}^2B_2(s)$ and $d=4$, note that $q\ge s=s_0^2$ and so $1/s_0^2 > q^{-4/3}$, which proves the first bound. Moreover this case appears in  \Cref{table:exceptions}.
If $G={}^2B_2(s)$ and $d>4$ then $d\ge 16$ 
and we are done. If $G= {}^3D_4(s)$, then by \cite[Remark 5.4.7 and Proposition 5.4.8]{kleidmanliebeck} either $V\cong V^{\tau_0}$ and $d\ge 24$, so \Cref{eq:enough_strong} holds, or $q\ge s^3$ and $d\ge 8$,
which is also enough.

Assume now $G$ is classical, and denote by $n$ the dimension of the natural module. We start from the case $X_\sigma=A_r(s)$ or ${}^2A_r(s)$, so $n=r+1$. Note that in this case \Cref{eq:enough_strong} is equivalent to
\begin{equation}
    \label{eq:enough_strong_A}
    d>3n.
\end{equation}
Assume $V$ is (a Frobenius twist or the dual of) the natural module, the alternating square, or the symmetric square. Note that for ${}^2A_r(s)$ we have $q\ge s^2$, unless $r=3$ and $V$ is the alternating square, in which case $q\ge s$.
 We now claim that $\alpha(G,V)\gg_r 1/s$, and one readily checks that this is enough. (Note that some cases appear \Cref{table:exceptions}, namely the ones in the third, fourth, sixth and seventh lines, and the case $d=3$ for $A_1(s)$.) 
The case of the natural module follows from \Cref{l:natural_module,l:natural_module_q=2}. For the symmetric square, we simply count elements with eigenvalue $1$ on the natural module. For the alternating square, we count elements acting as $\text{diag}(\lambda, \lambda^{-1})$ on a $2$-space (nondegenerate for unitary groups) and fixing no $1$-space on a complement, and the claim is proved. (For $A_r(s)$ we could also apply \Cref{l:codimension_1}.)

Assume then $V$ is not among these, so $d\ge n^2/2$ by \cite[Theorem 1.1]{liebeck_ordermax}. Then, \Cref{eq:enough_strong_A} holds provided $n>6$. Assume then $n\le 6$. Since we are excluding natural module, alternating square and symmetric square, we see by \cite{lubeck_smallrepn} and \eqref{eq:twisted} 
that in all the remaining cases either $d>3n$, or $n=2$ and $d=4,5,6$,
or $n=3$ and $d=7,8$.
For $n=3$ and $d=7,8$, the highest dominant weight is stable under the graph automorphism, so by \Cref{l:codimension_1} $\alpha(G,V)\gg 1/s$ and this is enough. For $n=2$ and $d=5$, every element of $G$ has eigenvalue $1$ and we are done.
The cases $n=2$ and $d=4,6$ appear in \Cref{table:exceptions} and we simply count unipotent elements.

The other cases are similar, and easier; let us handle for example $X_\sigma = C_r(q)$, so $n=2r$. If $d \ge n^2/2$ then \Cref{eq:enough_strong} holds for $n\ge 6$, so we may assume $n=4$ or $d<n^2/2$. If $n=4$ then \Cref{eq:enough_strong} holds if $d\ge 10$, and so by \cite{lubeck_smallrepn} we have $d=4$ or $5$. If $d<n^2/2$, then by \cite{liebeck_ordermax} we are reduced to a finite list of possibilities for $V$, and we can apply \Cref{l:codimension_1}. The cases $C_r(s)$, $r=2,3$ and $d=2r$ appear in \Cref{table:exceptions}; the case $C_2(s)\cong B_2(s)$ and $d=5$ does not appear as every element has eigenvalue $1$.

\begin{table}
		\centering
		\caption{Exceptions to the stronger bound in \Cref{l:p_restricted_bound}; see \Cref{rem:table} for more information.}   
		\label{table:exceptions}
		\begin{tabular}{lll}
			\hline\noalign{\smallskip}
			$G$ & d & conditions \\
			\noalign{\smallskip}\hline\noalign{\smallskip}
			$G_2(s)$ & $6$ &  $q=s$ \\
			${}^2B_2(s)$ &  $4$ & $q\ge s$ \\ 
			$A_r(s)$ & $r+1$ & $2\le r\le 5$; $q\ge s$  \\    
   ${}^2A_r(s)$ & $r+1$ & $2\le r\le 3$; $q\ge s^2$ \\
			$A_1(s)$ & $3,4,6$   & $q\ge s^{1/2}$ \\ 
   $A_2(s)$ & $6$ & $q=s$ \\
   $A_3(s),{}^2A_3(s)$ & $6$ & $q=s$ \\
   $C_r(s)$ & $2r$ & $r=2,3$; $q\ge s$ \\
			\noalign{\smallskip}\hline\noalign{\smallskip}
		\end{tabular}
	\end{table}

 \textbf{Case 2:} $q<s=:p^e$. Assume first $G$ is untwisted. Let $\F_{p^f}$ be the minimal field of definition for $V$, so $p^f\le q$.

By \cite[Proposition 5.4.6]{kleidmanliebeck}, $V\otimes K$ is a tensor product of $m\ge e/f$ Frobenius twists of a module $M$. By assuming that $m$ is maximal, we have that the minimal field of definition for $M$ is $\F_s$,
and in particular
\[
|V|\ge p^{df}= p^{\dim(M)^mf}\ge p^{e\cdot \dim(M)^2/2} = s^{\dim(M)^2/2}.
\]

Moreover, $\alpha(G,V)\ge \alpha(G,M)$.
Then, if $M$ does not appear in \Cref{table:exceptions} and $\dim(M)\ge 3$, by Case 1 we readily get $\alpha(G,V)\ge q|\Out(S)|\log(q)q^{-d/3} = q|\Out(S)|\log(q)|V|^{-1/3}$.
Assume then that $\dim(M)=2$ or that $M$ appears in \Cref{table:exceptions}; then by the same calculation we are done unless $X_\sigma=A_1(s)$ and $d=4$.
This case appears in \Cref{table:exceptions} (note that the value $d=4$ was also found in Case 1). 

The case where $G$ is twisted is similar, using again \cite[Proposition 5.4.6]{kleidmanliebeck}.
\end{proof}

\begin{remark}
 \label{rem:table}
    \begin{itemize}
        \item[(i)] In \Cref{table:exceptions}, we did not describe explicitly the representations, and we did not list the precise conditions on $q$ for which the examples are indeed exceptions to the stronger bound $\alpha(G,V)\gg q|\Out(S)|\log(q)q^{-d/3}$ in \Cref{l:p_restricted_bound}. This can be done straightforwardly, and is not relevant for us. Note that we reported the case where $q$ is uniquely determined (e.g. in the first row, we have $q=s$, or the stronger bound holds), and in the other cases, usually the stronger bound holds unless $q$ assumes a couple of values (namely $s$ and $s^2$, or $s^2$ and $s^4$).
     
     \item[(ii)] In the fifth row, for $A_1(s)$, we have either $q\ge s$, or $q\ge s^{1/2}$ and $V=W\otimes W^{(s^{1/2})}$ where $W$ is the natural module. What is more, in the seventh row, $V$ is the alternating square, which can be realized over $\F_s$ also for ${}^2A_3(s)$.
        \end{itemize}
 \end{remark}

 \begin{lemma}
 \label{l:exceptions}
     Assume $V$ appears in \Cref{table:exceptions} and let $G\le T \le \N_{\GL_d(q)}(G)$. If $G$ is sufficiently large then $\alpha(T,V) >2 \log(q)|\Out(S)|
     q^{-d/2}$.
 \end{lemma}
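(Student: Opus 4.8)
\emph{Proof strategy.} The plan is to reduce to counting elements of $GZ_0$ having eigenvalue $1$ on $V$, where $Z_0:=\F_q^\times\cap T$ is the group of scalars in $T$, to bound this count from below by $\gg|GZ_0|/q$ by means of a maximal torus of $G$, and to absorb the loss against the target bound using $d\ge 3$ and the hypothesis that $G$ is large.

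Since $G\trianglelefteq T$ and $Z_0\le Z(\GL_d(q))$, we have $GZ_0\trianglelefteq T$ and $T/GZ_0$ embeds into $\Out(G)$, and $|\Out(G)|\ll\log q$ because every entry of \Cref{table:exceptions} has bounded rank (the only unbounded contribution to $|\Out(G)|$ is from field automorphisms, and that is at most $\log_p(s)\ll\log q$). Hence $\alpha(T,V)\ge\alpha(GZ_0,V)/|\Out(G)|$, and since moreover $d\le 6$, it suffices to prove $\alpha(GZ_0,V)\ge c/(q-1)$ for an absolute $c>0$: indeed then $\alpha(T,V)\ge c/\big((q-1)|\Out(G)|\big)\gg 1/(q\log q)$, which exceeds $2\log(q)|\Out(G)|q^{-d/2}\ll q^{-d/2}\log^2 q$ once $q^{d/2-1}\gg\log^3 q$, i.e.\ once $q$ — equivalently $qd$, equivalently $|G|$ — is large, using $d\ge 3$.

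For $g\in G$, $z\in Z_0$ the element $gz$ of $\GL_d(q)$ has eigenvalue $1$ on $V$ iff $z^{-1}$ is an eigenvalue of $g$, so
\[
\alpha(GZ_0,V)=\frac{1}{|G|\,|Z_0|}\sum_{\zeta\in Z_0}N_\zeta,\qquad N_\zeta:=\#\{g\in G:\zeta\text{ is an eigenvalue of }g\text{ on }V\}.
\]
The heart of the matter is the estimate: \emph{there are a subgroup $\Lambda\le\F_q^\times$ and an absolute $c_0>0$ with $N_\zeta\ge c_0|G|/|\Lambda|$ for every $\zeta\in\Lambda$}. To prove it I would pick a maximal torus $T_1\le G$ and a weight $\chi$ of $V$ for $T_1$ taking values in $\F_q^\times$, and set $\Lambda:=\chi(T_1)$. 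For $\zeta\in\Lambda$ the fibre $\chi^{-1}(\zeta)\subseteq T_1$ is a coset of $\ker\chi$ of size $|T_1|/|\Lambda|$; once $s$ is large it contains $\gg|T_1|/|\Lambda|$ elements regular in $G$ (all but an $O(1/s)$-proportion lie off the relative root hyperplanes), hence with centraliser $T_1$; these split into $G$-conjugacy classes, boundedly many of which meet the fibre, each of size $|G|/|T_1|$, and every member of such a class has $\chi(t)=\zeta$ among its eigenvalues on $V$ (conjugation preserves eigenvalues), so $N_\zeta\gg|G|/|\Lambda|$. For the rows of \Cref{table:exceptions} in which $V$ is a natural module this is essentially \Cref{l:natural_module} (or \Cref{l:natural_module_q=2} in characteristic $2$), with $\Lambda=\F_s^\times$ for $\GL$/$\Sp$, $\Lambda=\mu_{s+1}$ for the unitary groups, and the analogous subgroup for the remaining classical and the Suzuki groups — those proofs count elements with prescribed eigenvalue on a suitable $1$- or $2$-space, and one simply puts $\zeta$ in place of $1$. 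For the remaining rows ($A_1(s)$ on a symmetric power or on $W\otimes W^{(s^{1/2})}$, $A_2(s)$ on $\operatorname{Sym}^2$, $A_3(s)$ and ${}^2A_3(s)$ on the exterior square) I would take $\chi$ the highest weight and $T_1$ a split maximal torus (respectively, for ${}^2A_3(s)$, the torus used in the unitary case). The case $\zeta=1$ needs a word: if $V$ has a zero weight, take instead $\chi=0$, $\Lambda=\{1\}$, $N_1=|G|$ by \Cref{l:weight_zero}; and when the chosen $\chi$ restricts to a relative root — this occurs only for $G_2(s)$ on the $6$-dimensional module, where every weight is a short root — the fibre argument breaks at $\zeta=1$, but there $N_1=\alpha(G,V)|G|$ and one checks in each row that $\alpha(G,V)\gg 1/|\Lambda|$ (using \Cref{l:codimension_1} for $G_2(s)$, \Cref{l:natural_module} for natural modules, and an elementary count of split semisimple elements otherwise — note $\alpha(G,V)$ is of order $1/s^{1/2}$, not merely $1/s$, for $W\otimes W^{(s^{1/2})}$), so that $N_1\ge c_0|G|/|\Lambda|$ as well.

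Granting this estimate, the proof concludes by elementary arithmetic: $\F_q^\times$ is cyclic and $Z_0,\Lambda$ are subgroups, so $|Z_0\cap\Lambda|=\gcd(|Z_0|,|\Lambda|)$ while $\lcm(|Z_0|,|\Lambda|)$ divides $q-1$, whence
\[
\alpha(GZ_0,V)\ge\frac{1}{|G|\,|Z_0|}\sum_{\zeta\in Z_0\cap\Lambda}N_\zeta\ge\frac{c_0\,|Z_0\cap\Lambda|}{|Z_0|\,|\Lambda|}=\frac{c_0}{\lcm(|Z_0|,|\Lambda|)}\ge\frac{c_0}{q-1},
\]
which is the bound we reduced to. The one genuinely substantial step is thus the uniform lower bound $N_\zeta\gg|G|/|\Lambda|$: for natural modules it is a mild extension of \Cref{l:natural_module}, for the symmetric powers, twisted tensor products and exterior squares it needs the split-torus count together with the observation that $\alpha(G,V)$ is already at least $\gg1/|\Lambda|$ in those cases, and for the twisted groups one must be careful to use a maximal torus — not the smaller maximal split torus — so that regular elements are self-centralising. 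Everything else is the cyclic-group computation above together with the already-available estimates \Cref{l:natural_module,l:natural_module_q=2,l:weight_zero,l:codimension_1}.
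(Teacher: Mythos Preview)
Your strategy is sound and, once the torus is chosen correctly in every row, it works; it is also cleaner than the paper's argument, which reduces instead to a subgroup $H:=T\cap\GL_d(s^u)$ and checks $\alpha(H,V)\gg 1/s^u$ case by case (here $u=2$ for the unitary natural module and $u=1$ otherwise), then uses $|T:H|\le (q-1)/(s^u-1)$ in place of your cyclic-group computation with $\Lambda$ and $Z_0$. Your route avoids passing to $H$ and gives a uniform reason for the bound, at the cost of having to pick, in each row, a weight whose image actually lies in $\F_q^\times$.

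There is one genuine slip. For ${}^2A_3(s)$ on the exterior square you propose ``the torus used in the unitary case'', i.e.\ the diagonal torus with entries in $\mu_{s+1}$; but here $q=s$, and every weight on $\Lambda^2 V_{\mathrm{nat}}$ takes values in $\mu_{s+1}$, so $\Lambda\not\le\F_q^\times$ and your $\lcm$ step collapses. The fix is to use the isomorphism $\SU_4(s)\cong\Omega_6^-(s)$ and take a maximal torus containing a split $\F_s^\times$ factor: with respect to a hyperbolic Hermitian form one has elements $\mathrm{diag}(a,a^{-s},c,c^{-s})$ with $ac\in\F_s^\times$, and the weight $(a,c)\mapsto ac$ on $\Lambda^2$ lands in $\F_s^\times=\F_q^\times$. (The paper effectively does this by counting elements acting as $\mathrm{diag}(\lambda,\lambda^{-1})$ on a nondegenerate $2$-space.)

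Two smaller points. First, the claim ``the fibre argument breaks at $\zeta=1$ only'' is not quite right in rank~$1$: for $A_1(s)$ the fibre over $\zeta=-1$ is the central element $-I$, also non-regular. This does not damage your final inequality, since a bounded set of bad $\zeta$'s is absorbed by the $\lcm$ bound (use only $N_1$ when $|Z_0\cap\Lambda|$ is bounded), but it is worth saying. Second, for ${}^2B_2(s)$ no regular split semisimple element has eigenvalue $1$ (the split-torus weights are $\pm1,\pm\theta$ with $\gcd(\theta,s-1)=1$), so ``an elementary count of split semisimple elements'' does not give $N_1\gg|G|/|\Lambda|$; use unipotent elements instead (Steinberg's count gives proportion $1/s$).
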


 \begin{proof}
Since $|\Out(S)|\ll \log(s)\ll \log(q)$, it is enough to show that $\alpha(T, V) \ge 1/q^{d/2-\varepsilon}$ for some $\epsilon >0$. Moreover, since $N\le (ZG).\Out(S)$, with $Z=\F_q^\times$, we have $|T:T\cap ZG|\ll \log(q)$ and so we may also assume $G\le T\le ZG$. 

Assume first $q<s$, so by \Cref{rem:table}(ii), $q=s':=s^{1/2}$, $G=A_1(s)$, $d=4$ and $V=W\otimes W^{(s')}$ where $W$ is the natural module. Write $T=GZ'$, where $Z'\le Z$, and work in every coset $z'G$ with $z'\in Z'$; counting elements of $G$ that have eigenvalue $\lambda$ with $\lambda^{s'+1}=z^{-1}$, we deduce that $\alpha(T,V)\gg 1/s' \gg 1/q^{4/2-\eps}$. 

Assume then $q\ge s$. Let $u=2$ if $G={}^2A_r(s)$ with $d\neq 6$, and let $u=1$ otherwise. Then, $q$ is a power of $s^u$. We have $T\le ZG$, and let $H:=G\cap \GL_d(s^u)$, so $|T/H| \le (q-1)/(s^u-1)$ and $\alpha(G,V)\gg \alpha(H,V)s^u/q$. In particular, whenever we show $\alpha(H,V)\gg 1/s^u$ we are done.

For $C_r(s)$ and $r=2,3$, we see that $\alpha(H,V)\gg 1/s$. This follows from the fact that for every $\lambda \in \F_s$, the proportion of elements of $\Sp_{2r}(s)$ ($r=2,3$) with eigenvalue $\lambda$ is $\gg 1/s$.
For $A_r(s)$ and $V$ the natural module, we have  $\alpha(H,V)\gg 1/s$ from \Cref{l:natural_module}. 
For ${}^2A_r(s)$ and $V$ the natural module, setting $K:=H\cap \GU_d(s)$, we deduce from \Cref{l:natural_module} that $\alpha(K,V)\gg 1/s$ and so $\alpha(H,V)\gg 1/s^2=1/s^u$.
For $A_3(s)$ or ${}^2A_3(s)$ and $d=6$, the module is the alternating square; we count elements of $H$ acting as $\text{diag}(\lambda,\lambda^{-1})$, with $\lambda \in \F_s$, on a $2$-space  (nondegenerate for ${}^2A_3(q)$) and fixing no $1$-space on a  complement, and get $\alpha(H,V)\gg 1/s$.
For $A_1(s)$, if an element has eigenvalue $1$ on the natural module then it has eigenvalue $1$ on $V$,
 $\alpha(H,V)\gg 1/s$. For $A_2(s)$ and $d=6$ the module is the symmetric square and we obtain the same bound. 

The remaining cases are $G=G_2(s)$ or ${}^2B_2(s)$. Write $H=GZ'$ where $Z'\le \F_s^\times$. Fix a coset $zG$ with $z\in Z'$. Assuming $|z|>6$ in the case $G=G_2(s)$, there exists a regular semisimple class $C$ in $G$ with eigenvalue $z^{-1}$.
Then $|C|/|G|\asymp 1/s_0^2$ and elements of $zC$ have eigenvalue $1$, so going through all such cosets we get $\alpha(H,V)\gg 1/s_0^2 \gg 1/s^{d/2-\epsilon}$, which is enough. 
 \end{proof}

 \begin{proof}[Proof of \Cref{prop:combination}] This follows immediately from \Cref{l:extraspecial_asymp,l:quasi_asymp,l:p_restricted_bound,l:exceptions}.     
 \end{proof}

\subsection{Summarizing the proof of \Cref{t:affine,t:main,t:subgroup,t:main_2}}
\label{subsec:final}

 The proofs of \Cref{t:affine,t:main,t:subgroup,t:main_2}, are now complete. We summarize them here.

\begin{proof}[Proof of \Cref{t:affine,t:main,t:subgroup,t:main_2}]
\Cref{l:reduction_primitive} reduces \Cref{t:main,t:main_2} to primitive groups $G$. If $G$ is non-affine we apply \Cref{t:luczak_guralnick}. \Cref{t:subgroup} was reduced to primitive affine groups in \cite{bailey2021groups}; so in every case we assume $G$ is primitive affine. It is clear that \Cref{t:affine} implies \Cref{t:main,t:main_2,t:subgroup} in the affine case (see \Cref{l:subgroup_eigenvalue1}), so it suffices to prove \Cref{t:affine}. In \Cref{sec_primitive_linear}, we showed that \Cref{t:affine} follows from \Cref{prop:GamL_1,prop:GamL_2,prop:combination}. We proved \Cref{prop:GamL_1} in \Cref{sec:GammaL1}, \Cref{prop:GamL_2} in \Cref{sec:GamL2}, and we summarized the proof of \Cref{prop:combination} right before \Cref{subsec:final}.
\end{proof}

 \bibliography{references}
\bibliographystyle{alpha}

\end{document}